\newtheorem{theorem}{Theorem}[section]
\newtheorem{lemma}[theorem]{Lemma}
\newtheorem{cor}[theorem]{Corollary}
\newtheorem{question}[theorem]{Question}
\theoremstyle{definition}
\newtheorem{definition}[theorem]{Definition}
 \def \dom{\operatorname{dom}}
\def\RCAo{\mathsf{RCA_0}}
\def\WKLo{\mathsf{WKL_0}}
\newcommand\EMinf{\mathrm{f EM}_{<\infty}}
\newcommand\ome{\omega}
\def\A{\forall}
\def\N{\mathbb{N}}
\def\Ii{\mathrm{I}\Sigma_1}
\def\II{\mathrm{I}\Sigma^0_1}
\def\III{\mathrm{I}\Sigma^0_2}
\newcommand\fin{\mathrm{fin}}
\newcommand\EM{\mathrm{EM}}
\def\Cod{\mathrm{Cod}}
\begin{document}

\title{Erdos-Moser and $I\Sigma_2$}
\author{Henry Towsner and Keita Yokoyama}
\date{\today}
\thanks{The first author
is partially supported by NSF grant DMS-1600263.
The second author
is partially supported by
JSPS KAKENHI (grant numbers 16K17640 and 15H03634) and JSPS Core-to-Core Program
(A.~Advanced Research Networks).}
\address {Department of Mathematics, University of Pennsylvania, 209 South 33rd Street, Philadelphia, PA 19104-6395, USA}
\email{htowsner@math.upenn.edu}
\urladdr{\url{http://www.math.upenn.edu/~htowsner}}
\address {School of Information Science, Japan Advanced Institute of Science and Technology, 1-1 Asahidai, Nomi, Ishikawa 923-1292, JAPAN}
\email{y-keita@jaist.ac.jp}

\begin{abstract}
The first-order part of the Ramsey's Theorem for pairs with an arbitrary number of colors is known to be precisely $B\Sigma^0_3$.  We compare this to the known division of Ramsey's Theorem for pairs into the weaker principles, $\mathsf{EM}$ (the Erd\H{o}s-Moser principle) and $\mathsf{ADS}$ (the ascending-descending sequence principle): we show that the additional strength beyond $I\Sigma^0_2$ is entirely due to the arbitrary color analog of $\mathsf{ADS}$.

Specifically, we show that $\mathsf{ADS}$ for an arbitrary number of colors implies $B\Sigma^0_3$ while $\mathsf{EM}$ for an arbitrary number of colors is $\Pi^1_1$-conservative over $I\Sigma^0_2$ and it does not imply $I\Sigma^0_2$.
\end{abstract}

\maketitle

\section{Introduction}

One much-studied project in reverse mathematics is determining the precise strength of Ramsey's Theorem for Pairs ($\mathsf{RT}^2$) \cite{cholak:MR1825173,Hirst-PhD,MR3194495,MR2963024}.  Lerman, Solomon, and the first author showed \cite{Lerman2013Separating} that Ramsey's Theorem for Pairs with two colors, $\mathsf{RT}^2_2$, splits into two parts---the Erd\H{o}s-Moser principle $\mathsf{EM}$ \cite{MR3659408} and the Ascending-Descending Sequence principle $\mathsf{ADS}$ \cite{MR2298478}.  Their argument suggests that $\mathsf{ADS}$ captures the aspect of $\mathsf{RT}^2_2$ which requires the construction of a distinct solution for each color.

In this paper, we examine this division over an arbitrary number of colors---that is, the way the principle $\mathsf{RT}^2$ divides into arbitrary color analogs of $\mathsf{ADS}$ and $\mathsf{EM}$.  Our focus is identifying the first-order part of the theory, which can usually be measured by conservativity over the hierarchy of induction principles $I\Sigma^0_1<B\Sigma^0_2<I\Sigma^0_2<B\Sigma^0_3<\cdots$ \cite{MR606791}.  Slaman and the second author showed \cite{slaman_yokoyama} that $\mathsf{RT}^2$ is $\Pi^1_1$-conservative over $B\Sigma^0_3$; combined with Hirst's result \cite{Hirst-PhD} that $\mathsf{RT}^2+\mathsf{RCA}_0$ implies $B\Sigma^0_3$, this precisely characterizes the first-order consequences of $\mathsf{RT}^2$.

This result is unusual: it is known \cite{cholak:MR1825173} that $\mathsf{RT}^2$ divides into two principles, $\mathsf{SRT}^2$ and $\mathsf{COH}$ where the former has low$_2$ solutions and the latter principle is known to be conservative over the weaker principle $I\Sigma^0_2$ (the standard reference on these induction principles is \cite{MR1748522}).  Most constructions of a low$_2$ solution can be adapted to give conservation over $I\Sigma^0_2$, so it is striking that conservation for $\mathsf{SRT}^2$ cannot be strengthened to conservation over $I\Sigma^0_2$.  In this case, the issue is that when the low$_2$ construction is adapted to $\mathsf{RT}^2$ with a potentially nonstandard number of colors, the construction requires attempts to build solutions in each color.  $B\Sigma^0_3$ is needed to show that if, at every step, the solution in some color gets extended, then there must be a single color whose solution is extended unboundedly many times.

This suggests that the generalization of $\mathsf{EM}$ to an arbitrary number of colors should be conservative over $I\Sigma^0_2$, while the generalization of $\mathsf{ADS}$ to an arbitrary number of colors should imply $B\Sigma^0_3$.  The goal of this paper is to confirm these guesses.

In order to do this, we need to choose appropriate generalizations of $\mathsf{EM}$ and $\mathsf{ADS}$ to an arbitrary number of colors.

One natural choice is to adapt the definitions given in \cite{MR2298478}:
\begin{definition}
  If $S$ is a set and $c:[S]^2\rightarrow[0,a]$ is a coloring of pairs from $S$, we say $c$ is \emph{transitive} if whenever $x,y,z\in S$, $x<y<z$, and $c(x,y)=c(y,z)$, also $c(x,y)=c(x,z)$.

$\mathsf{EM}_{<\infty}$ holds if, whenever $c:[\mathbb{M}]^2\rightarrow[0,a]$ is a finite coloring, there is an infinite subset $S\subseteq\mathbb{M}$ such that $c\upharpoonright [S]^{2}$ is transitive.

  $\mathsf{trRT}_{<\infty}$ holds if, whenever $c:[\mathbb{M}]^2\rightarrow[0,a]$ is a transitive coloring of pairs, there is an infinite set $S$ such that $c\upharpoonright[S]^2$ is constant.
\end{definition}
Since $\mathsf{trRT}_2$ (that is, $\mathsf{trRT}_{<\infty}$ restricted to the case where $a=1$) is equivalent to $\mathsf{ADS}$, this might seem like a natural notion.  However it is known to behave oddly---for example, it is not known whether $\mathsf{trRT}_{n}$ implies $\mathsf{trRT}_{n+1}$ for any $n$ \cite{MR2298478,MR3219047}, leading to the unusual situation where the strength of a principle may depend on the number of colors.

$\mathsf{EM}_{<\infty}$ is similarly odd; for example, it is not clear that $\mathsf{EM}_{<\infty}$ is enough to show that it is possible to find solutions to finitely many simultaneous instances of $\mathsf{EM}$.

The following notion seems to behave more naturally:
\begin{definition}
    If $S$ is a set and $c:[S]^2\rightarrow[0,a]$ is a coloring of pairs from $S$, we say $c$ is \emph{fallow} if whenever $x,y,z\in S$, $x<y<z$, $c(x,z)\in\{c(x,y),c(y,z)\}$.

$\mathsf{fEM}_{<\infty}$ holds if, whenever $c:[\mathbb{M}]^2\rightarrow[0,a]$ is a finite coloring, there is an infinite subset $S\subseteq\mathbb{M}$ such that $c\upharpoonright [S]^{2}$ is fallow.

  $\mathsf{fRT}_{<\infty}$ holds if, whenever $c:[\mathbb{M}]^2\rightarrow[0,a]$ is a fallow coloring of pairs, there is an infinite set $S$ such that $c\upharpoonright[S]^2$ is constant.
\end{definition}
When $a=1$, fallowness and transitivity are equivalent, but in general fallowness is a stricter requirement than transitivity, so $\mathsf{fEM}_{\infty}$ implies $\mathsf{EM}_{<\infty}$, while $\mathsf{trRT}_{<\infty}$ implies $\mathsf{fRT}_{<\infty}$.

As some evidence that fallowness behaves reasonably, observe that $\mathsf{ADS}$ implies $\mathsf{fRT}_n$ for all finite $n$ and that $\mathsf{fEM}_{<\infty}$ shows that it is possible to solve finitely many instances of $\mathsf{EM}$ simultaneously.  We verify this last fact.
\begin{definition}
$\mathsf{EM}_{\times}$ holds if, whenever $\{c_i\}_{i\leq a}$ is a finite collection of colorings $c_i:[\mathbb{M}]^2\rightarrow\{0,1\}$, there is an infinite set $S\subseteq\mathbb{N}$ such that $c_i\upharpoonright[S]^2$ is transitive.
\end{definition}

\begin{lemma}
$\mathsf{EM}_\times$ and $\mathsf{fEM}_{<\infty}$ are equivalent.
\end{lemma}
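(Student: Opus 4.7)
The plan is to prove the two implications separately, each by a direct coding argument that converts between the single multi-valued coloring and the finite family of $2$-colorings.

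For the direction $\mathsf{fEM}_{<\infty}\Rightarrow\mathsf{EM}_\times$, given a collection $\{c_i\}_{i\leq a}$ of colorings into $\{0,1\}$ I would amalgamate them into a single coloring
\[
c(x,y)=\sum_{i\leq a} c_i(x,y)\cdot 2^i,
\]
taking values in $[0,2^{a+1}-1]$ (or, if one prefers to avoid exponentiation, the sequence code $\langle c_0(x,y),\dots,c_a(x,y)\rangle$, which is definable in $\mathsf{RCA}_0$). Applying $\mathsf{fEM}_{<\infty}$ yields an infinite $S$ on which $c$ is fallow. For any $x<y<z$ in $S$ with $c_i(x,y)=c_i(y,z)$, fallowness gives $c(x,z)\in\{c(x,y),c(y,z)\}$, and reading off the $i$-th bit of either possibility forces $c_i(x,z)=c_i(x,y)$, proving transitivity of each $c_i$ on $S$.

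For the direction $\mathsf{EM}_\times\Rightarrow\mathsf{fEM}_{<\infty}$, given $c:[\mathbb{M}]^2\rightarrow[0,a]$, I would introduce the indicator colorings $c_i(x,y)=1$ if $c(x,y)=i$ and $c_i(x,y)=0$ otherwise, for $i\in[0,a]$. Applying $\mathsf{EM}_\times$ to this family produces an infinite $S$ on which every $c_i$ is transitive. To see $c$ is fallow on $S$, pick $x<y<z$ in $S$ and set $w=c(x,z)$; if $w\notin\{c(x,y),c(y,z)\}$, then $c_w(x,y)=c_w(y,z)=0$ while $c_w(x,z)=1$, contradicting transitivity of $c_w$.

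There is no real obstacle here, since both translations are elementary codings that preserve the relevant combinatorial properties bit-by-bit. The only thing to verify is that the encoding in the first direction and the indicator construction in the second direction are both definable in $\mathsf{RCA}_0$ uniformly in the (possibly nonstandard) parameter $a$, which follows from standard $\Sigma^0_1$-coding of finite sequences.
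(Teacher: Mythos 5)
Your proposal is correct and matches the paper's proof essentially exactly: the same indicator-coloring decomposition in one direction and the same bit-sum amalgamation $c=\sum_i 2^i c_i$ in the other, with the same pointwise argument for fallowness/transitivity. The extra remark about replacing the powers-of-two encoding with sequence codes to keep things comfortably in $\mathsf{RCA}_0$ is a reasonable refinement but changes nothing substantive.
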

\begin{proof}
  Suppose $\mathsf{EM}_\times$ holds and let $c:[\mathbb{M}]^2\rightarrow[0,a]$ be given.  For each $i\leq a$, define
\[c_i(x,y)=\left\{\begin{array}{ll}
1&\text{if }c(x,y)=i\\
0&\text{otherwise}.
\end{array}\right..\]
By $\mathsf{EM}_\times$, we find an infinite set $S$ on which every $c_i$ is homogeneous.  We claim that $c$ is fallow on $S$.  Let $x<y<z$ be given.  Then $c_{c(x,z)}(x,z)=1$.  Therefore we cannot have $c_{c(x,z)}(x,y)=c_{c(x,z)}(y,z)=0$, since this would contradict the transitivity of $c_{c(x,z)}$, so either $c_{c(x,z)}(x,y)=1$ or $c_{c(x,z)}(y,z)=1$, and therefore $c(x,z)\in\{c(x,y),c(y,z)\}$.

Conversely, suppose $\mathsf{fEM}_{<\infty}$ holds and let $\{c_i\}_{i\leq a}$ be given.  We define $c:[\mathbb{M}]^2\rightarrow[0,2^{a+1}]$ by setting $c(x,y)=\sum_{i\leq a}2^ic_i(x,y)$.  By $\mathsf{fEM}_{<\infty}$, we obtain a set on which $c$ is fallow.  Let $x<y<z$ be given, and suppose $c_i(x,y)=c_i(y,z)$, so both $c(x,y)$ and $c(y,z)$ have $c_i(x,y)$ as their $i$-th bit.  Since $c(x,y)\in\{c_i(x,y),c_i(y,z)\}$, in particular the $i$-th bit of $c(x,z)$ is also $c_i(x,y)$, so $c_i(x,z)=c_i(x,y)$.
\end{proof}
Note that in the last step, we really appear to need fallowness of $c$---transitivity would not suffice---and it appears that $\mathsf{EM}_{<\infty}$ may be strictly weaker.

We can now state our results.
\begin{theorem}
$\mathsf{fRT}_{<\infty}+\mathsf{RCA}_0$ implies $B\Sigma^0_3$.
\end{theorem}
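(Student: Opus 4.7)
The plan is to prove the contrapositive: assuming $\mathsf{fRT}_{<\infty}$ and the failure of $B\Sigma^0_3$ over $\mathsf{RCA}_0$, I will construct a $\Delta^0_1$ fallow coloring admitting no infinite homogeneous set, contradicting $\mathsf{fRT}_{<\infty}$. The failure of $B\Sigma^0_3$ supplies $a\in\mathbb{N}$ and a $\Sigma^0_3$ formula $\phi(x,y)=\exists z\,\forall w\,\exists v\,\theta(x,y,z,w,v)$ (with $\theta$ bounded) such that $\forall x\leq a\,\exists y\,\phi(x,y)$ holds but $m(x):=\min\{y:\phi(x,y)\}$ is unbounded on $[0,a]$.

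The construction hinges on the $\Delta^0_1$ kill-time
\[\tau(x,y,z):=\min\bigl\{s:\exists w\leq s\,\forall v\leq s\,\neg\theta(x,y,z,w,v)\bigr\},\]
which records the first stage at which the pair $(y,z)$ is definitively refuted; $\tau(x,y,z)<\infty$ iff $(y,z)$ fails to witness $\phi(x,y)$. Let $\mathrm{Surv}_s(x):=\{(y,z)\leq s:\tau(x,y,z)>s\}$, a family monotone-decreasing in $s$, and let $y^x_s$ be the first coordinate of its lex-least element. This $y^x_s$ is $\Delta^0_1$, non-decreasing in $s$, and bounded above by $m(x)$, since the lex-minimal true witness $(m(x),z^*(x))$ always belongs to $\mathrm{Surv}_s(x)$. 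Writing $\mathrm{ch}(x,r)$ for the event $y^x_r>y^x_{r-1}$, each $x\leq a$ undergoes at most $m(x)+1$ such changes.

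Now define $c:[\mathbb{N}]^2\to[0,a+1]$ by setting $r^*(s,t)=\max\{r\in(s,t]:\exists x\leq a,\,\mathrm{ch}(x,r)\}$ when defined, $c(s,t)=\min\{x\leq a:\mathrm{ch}(x,r^*(s,t))\}$ in that case, and $c(s,t)=a+1$ otherwise. This coloring is $\Delta^0_1$ and fallow, since for $s<t<u$ the maximum $r^*(s,u)$ is realized in either $(s,t]$ or $(t,u]$, forcing $c(s,u)\in\{c(s,t),c(t,u)\}$. Suppose $\mathsf{fRT}_{<\infty}$ supplies an infinite homogeneous set $S$. If $S$ has color $x_0\leq a$, then each consecutive pair $s<t$ in $S$ yields a $y^{x_0}$-change in $(s,t]$, so the set of $y^{x_0}$-change stages is unbounded; but it has at most $m(x_0)+1$ elements, so by the $I\Sigma^0_1$-pigeonhole it is bounded, a contradiction. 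If $S$ has the default color $a+1$, no $y^x$-changes occur past $\min S$, so each $y^x$ is stable at some $Y(x)\leq m(x)$; provided $Y(x)=m(x)$ for every $x\leq a$, we conclude $m(x)\leq\min S$ uniformly, contradicting unboundedness of $m$.

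The main obstacle is ruling out $Y(x_0)<m(x_0)$ in the default-color case. If this occurs, the lex-least surviving pair keeps the fixed first coordinate $Y(x_0)$ while the second coordinate climbs indefinitely (since every $(Y(x_0),z)$ has $\tau<\infty$ and gets killed in turn, but a new candidate with larger $z$ always enters the surviving set), producing no $y$-change and so no contradiction. Overcoming this will require modifying the construction — for example, restricting the search domain so that $z$ is bounded in terms of $y$ and $s$, thereby ensuring that each first-coordinate value can support only finitely many candidates before being exhausted and forcing $y$ to advance. A secondary delicacy is that $a$ and $m(x_0)$ may be nonstandard, so all counting arguments rely on the $I\Sigma^0_1$-pigeonhole that any subset of $\mathbb{N}$ of cardinality bounded by a specific element of the model is itself bounded.
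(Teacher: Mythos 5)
Your construction has a gap that precedes the one you flagged: the claim that $\tau(x,y,z)<\infty$ iff $(y,z)$ fails to witness $\phi(x,y)$ is false. ``Fails to witness'' is the $\Sigma^0_2$ statement $\exists w\,\forall v\,\neg\theta(x,y,z,w,v)$, while $\tau(x,y,z)<\infty$ unwinds to the $\Sigma^0_1$ statement $\exists s\,\exists w\leq s\,\forall v\leq s\,\neg\theta(x,y,z,w,v)$; these cannot coincide. Concretely, if $(y,z)$ is a true witness but the least $v$ with $\theta(x,y,z,0,v)$ is some large $v_0>0$, then already at $s=0$ we have $\forall v\leq 0\,\neg\theta(x,y,z,0,v)$, so $\tau(x,y,z)=0$. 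Thus true witnesses --- including the lex-least pair $(m(x),z^*(x))$ --- are routinely (indeed almost always) ejected from $\mathrm{Surv}_s(x)$, and the invariant $y^x_s\leq m(x)$, which both your pigeonhole contradiction for colour $x_0$ and the counting bound on $\mathrm{ch}(x,\cdot)$ depend on, does not hold. Your fallowness argument for $c$ is fine, but it is downstream of a kill-time that does not kill the right things.

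The obstacle you identify at the end (that $Y(x_0)$ may stabilize strictly below $m(x_0)$) is a real second issue, but both trace to the same source: whether $(y,z)$ is genuinely dead is a $\Sigma^0_2$-limit question, not a one-shot $\Sigma^0_1$ event. To get a usable approximation one would have to track, per $(y,z)$, a current candidate $w$ and allow it to be revised as counterexamples to $\forall v\leq s\,\neg\theta$ appear; death corresponds to that candidate stabilizing. This is exactly the bookkeeping the paper absorbs into a preliminary lemma: over $B\Sigma^0_2$ (which $\mathsf{fRT}_{<\infty}$ already gives), $B\Sigma^0_3$ is reformulated as a bounding statement for a function $h:u\times\mathbb{M}\to\mathbb{M}$ each of whose rows is infinitely often bounded, with auxiliary functions $q_a,r_a$ doing the $\Sigma^0_2$-limit accounting. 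The coloring is then built directly from $h$ (colour $=$ argmax of interval minima), which is provably stable and fallow, so that the even weaker $\mathsf{fSRT}_{<\infty}$ suffices. Your approach fuses both steps, and the fusion is where it breaks; either factor out the $\Sigma^0_2$-limit as the paper does, or replace $\tau$ with a movable-marker approximation and redo the counting against the number of marker movements rather than against $m(x)$.
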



\begin{theorem}
  $\mathsf{fEM}_{<\infty}+I\Sigma^0_2$ is a $\Pi^1_1$-conservative extension of $I\Sigma^0_2$.
\end{theorem}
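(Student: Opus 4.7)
The plan is a standard model-theoretic argument for $\Pi^1_1$-conservation. Starting from a countable $(M, \mathcal{S}) \models \mathsf{RCA}_0 + I\Sigma^0_2$, one builds a countable ascending chain $\mathcal{S} = \mathcal{S}_0 \subseteq \mathcal{S}_1 \subseteq \cdots$ with the same first-order part $M$, each $(M, \mathcal{S}_n)$ a model of $\mathsf{RCA}_0 + I\Sigma^0_2$, arranged by bookkeeping so that every coloring appearing as a $\mathsf{fEM}_{<\infty}$-instance in the union receives a fallow solution at some stage. The limit satisfies $\mathsf{fEM}_{<\infty} + I\Sigma^0_2$ and agrees with $(M, \mathcal{S})$ on $\Pi^1_1$ truths, which gives the conservation. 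Thus the theorem reduces to a one-step lemma: given any coloring $c \in \mathcal{S}$ with $c \colon [M]^2 \to [0, a]$, find an infinite fallow $G \subseteq M$ such that $(M, \mathcal{S}[G]) \models \mathsf{RCA}_0 + I\Sigma^0_2$.

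The one-step lemma I would prove by a Mathias-style forcing over $\mathcal{S}$: a condition is a pair $(\sigma, T)$ with $\sigma$ a finite $c$-fallow set, $T \in \mathcal{S}$ an infinite reservoir with $\max \sigma < \min T$, and $\sigma \cup F$ fallow for every finite $F \subseteq T$; extensions lengthen $\sigma$ using elements of $T$ and shrink $T$ accordingly. The one-step lemma then follows provided one proves (a) a density lemma producing extensions with arbitrarily long $\sigma$, and hence an infinite fallow generic, and (b) a decision lemma: for any $\Sigma^0_2$ formula $\psi(x, G, \bar z)$ and any condition $(\sigma, T)$, one can within $I\Sigma^0_2(\mathcal{S})$ find an extension $(\sigma', T')$ that either exhibits a witness $x$ to $\psi$ or forces the set of witnesses to be bounded. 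Granted (b), the usual forcing argument propagates $I\Sigma^0_2$ from $\mathcal{S}$ to $\mathcal{S}[G]$, and iterating completes the proof.

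The main obstacle is the decision lemma (b). The pitfall flagged in the introduction is that a naive low$_2$-style procedure over potentially nonstandardly many colors would, at each stage, consult the ``current color'' of a candidate new element and track which colors remain live, and merging these answers across colors requires $B\Sigma^0_3$. The point of the fallow hypothesis is precisely to avoid this per-color bookkeeping: the fallow constraint $c(x, z) \in \{c(x, y), c(y, z)\}$ forces colors of new pairs to reuse colors already witnessed, so the relevant color types that the decision lemma must track are controlled by the finite set $\sigma$ alone and do not grow with $a$. Equivalently, via the equivalence of $\mathsf{fEM}_{<\infty}$ with $\mathsf{EM}_\times$ proved above, one recasts a condition as simultaneous $\mathsf{EM}$-compatibility in a finite family $\{c_i\}_{i \le a}$ of binary colorings, and arranges the combinatorics so that each binary decision is $I\Sigma^0_2$-verifiable and the decisions cohere without any aggregation over the index set. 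This uniform, aggregation-free decision procedure is the chief technical novelty, and once it is in place the iteration finishes the theorem.
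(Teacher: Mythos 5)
There is a genuine gap, and it sits exactly where your sketch is vaguest. Your proposed condition $(\sigma, T)$ with $T\in\mathcal{S}$ an infinite reservoir and ``$\sigma\cup F$ fallow for every finite $F\subseteq T$'' is circular: taking $\sigma=\emptyset$ and $F$ a two-element subset of $T$, the requirement already forces $c\upharpoonright[T]^2$ to be fallow, so the existence of such a $T$ in $\mathcal{S}$ is precisely what you are trying to establish. More importantly, even a corrected Mathias reservoir for Erd\H{o}s--Moser forcing is not a single set: in the Lerman--Solomon--Towsner construction (which the paper builds on, after first reducing to the stable principle $\mathsf{fSEM}_{<\infty}$ via $\mathsf{COH}$), the restraint is a \emph{family of partitions} of an interval into $r$ components, and one only extends the finite stem inside a single component. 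This is unavoidable because a new element $y$ must see $\sigma$ in a controlled way, and the eventual generic must in fact lie within one component of each partition one commits to. This structure is what makes the conservation over $I\Sigma^0_2$ delicate: asserting that a chosen component stays infinite along all legal refinements is $\Pi_2$, so ``there is a legal extension'' threatens to be $\Sigma_3$, not $\Sigma_2$. The whole point of the paper's construction---the ``up or out'' property, the auxiliary set $R$ of provably dead components, and the tree of finite sets $F_\dagger$ that postpones choosing among live branches---is to push this back down to $\Sigma_2$ so that the decision lemma can be proved inside $I\Sigma^0_2$.

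Your heuristic that fallowness avoids per-color bookkeeping does not match what actually happens: the paper's conditions explicitly quantify over partitions $g_j=\bigcup_{d'\le d}g_j^{d'}$ indexed by the (possibly nonstandard) color set, so per-color analysis is not avoided; it is managed. The $\mathsf{EM}_\times$ reformulation you invoke gives a clean \emph{statement} equivalence but does not by itself show that one can decide the $a+1$ binary $\mathsf{EM}$ requirements without aggregating over the index set---that aggregation is exactly what would demand $B\Sigma^0_3$. So the ``aggregation-free decision procedure'' you cite as the chief technical novelty is a promissory note where the paper's actual proof has its hardest combinatorics (families of partitions, refinement lemmas, the $|U|=1$ branch-pruning lemma), and nothing in your sketch indicates how to discharge it.
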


\begin{theorem}
  $\mathsf{WKL}_0+\mathsf{fEM}_{<\infty}$ is $\tilde\Pi^0_3$-conservative over $\mathsf{RCA}_0$, and thus it does not imply $I\Sigma^0_2$.
\end{theorem}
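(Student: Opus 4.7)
The plan is to prove the theorem by a countable-model construction: show that every countable model $M \models \mathsf{RCA}_0$ has an $\omega$-extension $M^* \models \mathsf{WKL}_0 + \mathsf{fEM}_{<\infty}$ with the same first-order part. Since $\tilde\Pi^0_3$-sentences are arithmetic, this immediately yields the conservativity; the non-implication of $I\Sigma^0_2$ follows because the $\Sigma^0_2$-induction axioms include parameter-free $\Pi^0_3$ sentences that are not already theorems of $\mathsf{RCA}_0$.

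The extension $M^*$ is obtained as the union of a chain $M = M_0 \subseteq M_1 \subseteq \cdots$ built by an enumeration argument, handling one instance of $\mathsf{WKL}_0$ or $\mathsf{fEM}_{<\infty}$ per stage. The $\mathsf{WKL}_0$-stages are handled by Harrington's classical $\Pi^1_1$-conservativity of $\mathsf{WKL}_0$ over $\mathsf{RCA}_0$. The $\mathsf{fEM}_{<\infty}$-stages reduce to the key one-step lemma: \emph{if $M \models \mathsf{RCA}_0$ and $c \in M$ is a fallow coloring $[\mathbb{M}]^2 \to [0,a]$ (with $a$ possibly nonstandard), then there is an infinite fallow $G \subseteq \mathbb{M}$ such that $M[G] \models \mathsf{RCA}_0$ has the same first-order part as $M$.} To prove this, I would use a Mathias-style forcing internal to $M$ with conditions $(F, X)$, where $F$ is a finite fallow segment and $X \in M$ is an $M$-infinite reservoir such that any extension of $F$ by elements of $X$ preserves fallowness. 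A sufficiently generic $G = \bigcup_s F_s$ is then infinite and fallow, and a density argument should show that $G$ can be chosen so that every $\Sigma^0_1(G)$ set is bounded by a corresponding $M$-definable $\Sigma^0_1$ set, so that $I\Sigma^0_1(G)$ holds in $M[G]$.

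The main obstacle is the case where $a$ is nonstandard. When $a$ is standard-finite, the equivalence with $\mathsf{EM}_\times$ established above lets one reduce the problem to iterating a known one-step lemma for ordinary $\mathsf{EM}$; for nonstandard $a$ this reduction is unavailable, and one has to treat the fallow coloring as a single object and carry out the forcing internally in $M$ so that all $a+1$ colors are handled simultaneously. The crucial property making this possible is that fallowness, unlike transitivity, is preserved under arbitrary restriction of the homogeneous set, so the reservoirs in the forcing conditions remain $M$-infinite even after refining to rule out each color class in turn. Verifying the preservation of $I\Sigma^0_1(G)$ across a nonstandard number of colors---the step that distinguishes this conservativity from a routine lift of the single-$\mathsf{EM}$ case---is what I expect to be the hardest technical point.
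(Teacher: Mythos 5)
Your strategy---build, for each countable $M\models\mathsf{RCA}_0$, an $\omega$-extension $M^*\models\mathsf{WKL}_0+\mathsf{fEM}_{<\infty}$ with the same first-order part, by iterating a Mathias one-step lemma---would, if it succeeded, establish full $\Pi^1_1$-conservativity of $\mathsf{fEM}_{<\infty}$ over $\mathsf{RCA}_0$. That stronger claim is false. As noted in the paper, $\mathsf{EM}$ already implies $B\Sigma^0_2$, so $\mathsf{fEM}_{<\infty}+\mathsf{RCA}_0\vdash B\Sigma^0_2$; and $B\Sigma^0_2$ is a $\Pi^1_1$ consequence that $\mathsf{RCA}_0$ does not prove (it is not expressible as a $\tilde\Pi^0_3$ sentence over $\mathsf{RCA}_0$, which is also why this does not contradict the theorem). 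Hence if you start from a countable $M\models\mathsf{RCA}_0$ in which $B\Sigma^0_2$ fails, no $\omega$-extension of $M$ sharing its first-order part can satisfy $\mathsf{fEM}_{<\infty}$, and the one-step forcing lemma you sketch cannot preserve $I\Sigma^0_1$ as claimed over the bare base theory. What your outline does describe, correctly in spirit, is the proof of the paper's \emph{other} conservation result---$\Pi^1_1$-conservativity of $\mathsf{fEM}_{<\infty}+I\Sigma^0_2$ over $I\Sigma^0_2$---where a tree-forcing version of exactly this plan is carried out, and where the nonstandard-number-of-colors issue you flag is indeed the hard point.

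The theorem at hand uses a genuinely different method, and the difference is forced by the weak base theory. The paper proves a purely combinatorial fact in $I\Sigma_1$ (Theorem~4.8): every $\omega^{18^n}$-large finite set is $\mathsf{fEM}_{<\infty}$-$n$-dense, where density is a Ketonen--Solovay/Paris-style largeness hierarchy augmented by the extra partition clause needed for an indicator. Then, instead of an $\omega$-extension, one takes a countable nonstandard $M\models I\Sigma_1$ with an $M$-finite $X$ that is $\mathsf{fEM}_{<\infty}$-$m$-dense for some nonstandard $m$, and builds a semiregular \emph{cut} $I\subsetneq M$ with $X\cap I$ unbounded such that $(I,\mathrm{Cod}(M/I))\models\mathsf{WKL}_0+\mathsf{fEM}_{<\infty}$. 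Passing to a cut changes the first-order theory on purpose---$I$ can satisfy $B\Sigma^0_2$ and more even though $M$ need not---and the control retained on what transfers down is exactly $\tilde\Pi^0_3$, via the indicator. That is why the conclusion is $\tilde\Pi^0_3$-conservativity and no more, and why the $\omega$-extension argument cannot be made to work here.
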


The reader who prefers transitivity to fallowness will be pleased to note that the analogous results hold for those notions as well: since $\mathsf{trRT}_{<\infty}$ implies $\mathsf{fRT}_{<\infty}$, we have $\mathsf{trRT}_{<\infty}+\mathsf{RCA}_0$ implies $B\Sigma^0_3$, and since $\mathsf{fEM}_{<\infty}$ implies $\mathsf{EM}_{<\infty}$, $\mathsf{EM}_{<\infty}+I\Sigma^0_2$ is a $\Pi^1_1$-conservative extension of $I\Sigma^0_2$.

\section{Implication}

In this section we prove our first result, that $\mathsf{trRT}_{<\infty}+\mathsf{RCA}_0$ implies $B\Sigma^0_3$.



We can even work with a slightly weaker principle:
\begin{definition}
  A coloring $c:[\mathbb{M}]^2\rightarrow[0,a]$ is \emph{stable} if for every $x$ there is an $m$ so that for all $n\geq m$, $c(x,n)=c(x,m)$.

    $\mathsf{fSRT}_{<\infty}$ holds if, whenever $c:[\mathbb{M}]^2\rightarrow[0,a]$ is a stable fallow coloring of pairs, there is an infinite set $S$ such that $c\upharpoonright[S]^2$ is constant.
\end{definition}


\begin{lemma}[$\mathsf{RCA}_0+B\Sigma^0_2$]
  $B\Sigma^0_3$ is equivalent to
  \begin{quote}
    for any $h:u\times\mathbb{M}\rightarrow\mathbb{M}$ such that $\forall x<u\,\exists m\,\exists^\infty z \ h(x,z)<m$, there is an $M\in\mathbb{M}$ such that $\forall x<u\,\exists^\infty z\ h(x,z)<M$.
  \end{quote}
\end{lemma}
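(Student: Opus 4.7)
The forward direction, that $B\Sigma^0_3$ implies the bounding principle, is immediate: the formula $\exists m\,\exists^\infty z\, h(x,z)<m$ is $\Sigma^0_3$ (unfolding $\exists^\infty z$ as $\forall N\,\exists z>N$), so $B\Sigma^0_3$ applied to the hypothesis yields a uniform bound $M$ on the witness $m$ over $x<u$, and then $h(x,z)<m<M$ gives $h(x,z)<M$.

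For the reverse direction, I would reduce an arbitrary instance of $B\Sigma^0_3$ to the bounding principle. Given a $\Pi^0_2$ formula $P(m,x)\equiv\forall s\,\exists v\,\theta(m,x,s,v)$ and the hypothesis $\forall x<u\,\exists m\,P(m,x)$, the plan is to build a recursive $h:u\times\mathbb{M}\to\mathbb{M}$ so that ``$\exists^\infty z\,h(x,z)<m$'' is equivalent to ``$\exists m'<m\,P(m',x)$''. The naive definition---$h(x,z)=$ least $m\leq z$ with $\forall s\leq z\,\exists v\leq z\,\theta(m,x,s,v)$, else $z+1$---fails, because the witnesses $v$ verifying $P(m,x)$ may grow faster than the diagonal, so no stage $z$ is guaranteed to see witnesses for all $s\leq z$. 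The fix is to index the ``good'' stages by completion times rather than by raw values of $z$.

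Define $c(m,x,n)$ recursively as the least $z$ (strictly greater than $c(m,x,n-1)$ when $n>0$) such that $\exists v\leq z\,\theta(m,x,n,v)$. Since the entire computation of $c(m,x,0),\dots,c(m,x,n)$ is bounded by its output, the predicate ``$z=c(m,x,n)$'' is $\Delta^0_0$, and $P(m,x)$ is equivalent to the statement that $c(m,x,n)$ is defined for every $n$. Now set
\[h(x,z)=\min\bigl\{m\leq z:\exists n\leq z\,(z=c(m,x,n))\bigr\},\]
with $h(x,z)=z+1$ if this set is empty.

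The crucial claim is: $\exists^\infty z\,h(x,z)<m$ iff $\exists m'<m\,P(m',x)$. For $(\Leftarrow)$, if $P(m',x)$ holds then $h(x,c(m',x,n))\leq m'$ whenever $n\geq m'$ (ensuring $m'\leq c(m',x,n)$), yielding infinitely many such stages $z$. For $(\Rightarrow)$, I would invoke $B\Sigma^0_2$: among the infinitely many $z$ with $h(x,z)<m$, pigeonhole over the $m$ possible values produces a fixed $m'<m$ with $h(x,z)=m'$ infinitely often; each such $z$ equals $c(m',x,n_z)$, and since $c(m',x,\cdot)$ is strictly increasing and defined only on an initial segment of $\mathbb{N}$, infinitely many defined values force $c(m',x,\cdot)$ to be total, i.e., $P(m',x)$. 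Granted the claim, applying the bounding principle to $h$ gives $M$ with $\forall x<u\,\exists^\infty z\,h(x,z)<M$, which via the claim translates back to $\forall x<u\,\exists m'<M\,P(m',x)$, the desired instance of $B\Sigma^0_3$. The main obstacle to watch is choosing the encoding so that both directions of the claim hold simultaneously; once completion times are in place the remainder is a routine $B\Sigma^0_2$ pigeonhole.
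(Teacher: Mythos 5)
Your proof is correct, and the overall strategy is the same as the paper's---reduce $B\Pi^0_2$ (equivalent to $B\Sigma^0_3$ over $\mathsf{RCA}_0$) to the stated bounding principle by building a recursive $h(x,z)$ that encodes $\Pi^0_2$ truth as $\exists^\infty z\,h(x,z)<m$---but the construction of $h$ is genuinely different. The paper defines $q_a(b,z)$ as the current guess at a bound $y$ on witnesses for the $\Sigma^0_2$ statements $\exists y'\forall z'\neg\theta(a,b',y',z')$ for $b'\leq b$, and takes $r_a(z)$ to be the least $b$ at which this guess is revised at stage $z$; stabilization of all such guesses below $b$ pushes $r_a(z)$ above $b$ eventually, while a single failure keeps $r_a(z)\leq b$ cofinally. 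You instead track the positive $\Pi^0_2$ verification of $P(m,x)$ directly via the strictly increasing completion-time sequence $c(m,x,n)$ and let $h(x,z)$ be the least $m$ that reaches a milestone at $z$. The two constructions are dual in flavor---the paper monitors when the guess for the complementary $\Sigma^0_2$ event is revised, yours monitors when the $\Pi^0_2$ event itself makes progress---and both invoke $B\Sigma^0_2$ at the same structural point (in the paper to collect stabilization bounds across $b'\leq b$, in yours for the pigeonhole over $m'<m$). Your version is arguably more transparent in that the key equivalence ``$\exists^\infty z\,h(x,z)<m$ iff $\exists m'<m\,P(m',x)$'' reads off directly from unboundedness of the range of $c(m',x,\cdot)$, rather than from an argument about when a minimizing quantity changes. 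Two small points worth making explicit in a final write-up: that the domain of $c(m',x,\cdot)$ is downward-closed (so an unbounded domain is total), and that ``infinitely many $z$ in the range'' really does force the domain to be unbounded, since otherwise $I\Sigma^0_1$ yields a maximal $n$ in the domain and then $c(m',x,n)$ would bound the range.
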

\begin{proof}
  First, suppose $B\Sigma^0_3$ holds and let $h:u\times\mathbb{M}\rightarrow\mathbb{M}$ be given such that $\forall x<u\,\exists m\,\exists^\infty z \ h(x,z)<m$ be given---that is,
\[\forall x<u\,\exists m\,\forall n\,\exists z>n\ h(x,z)<m.\]
It is well-known \cite{MR1748522} that $B\Sigma^0_3$ is equivalent to $B\Pi^0_2$, so
\[\exists M\,\forall x<u\,\exists m\leq M\,\forall n\,\exists z>n\ h(x,z)<m.\]
This implies
\[\exists M\,\forall x<u\,\forall n\,\exists z>n\ h(x,z)<M.\]

Conversely, let $\phi(a,b)$ be a $\Pi^0_2$ statement, $\forall y\exists z\theta(a,b,y,z)$.   Define a function
\[q_{a}(b,z)=\min\{y\leq z\mid \forall b'\leq b\exists y'\leq y\forall z'\leq z\ \neg\theta(a,b',y',z')\}\]
and $z$ if there is no such $y$.  We then define $r_{a}(z)$ to be the least $b\leq z$ so that $q_{a}(b,z)<q_{a}(b,z+1)$ if there is such a $b$, and $z$ otherwise.

We claim that $\forall b\exists y\forall z\neg\theta(a,b,y,z)$ iff $\lim_{z\rightarrow\infty}r_{a}(z)=\infty$.  Suppose $\forall b\exists y\forall z\neg\theta(a,b,y,z)$ holds.  Then, for each $b$, we may choose $Y$ (using $B\Sigma^0_2$) so that $\forall b'\leq b\exists y\leq Y\forall Z\neg\theta(a,b',y,z)$.  Then for $b'\leq b$ and $z\leq Y$, we have $q_a(b',z)=q_a(b',z+1)$, and so $r_a(z)>b$ once $z\geq Y$.  Since, for each $b$, $r_a(z)\geq b$ for sufficiently large $z$, $\lim_{z\rightarrow\infty}r_{a,b}(z)=\infty$.

Conversely, suppose there is a $b$ so that $\forall y\exists z\theta(a,b,y,z)$ holds.  Then for any $z_0$, consider $y=q_{a}(b,z_0)$.  There must be some least $z$ so that $\theta(a,b,y,z)$ holds, and since $y=q_{a}(b,z_0)$, either $z>z_0$ or $y=z$; if $z>z_0$ then $q_{a}(b,z-1)<q_{a}(b,z)$, so $r_{a}(z-1)\leq b$.  If $z\leq z_0$ then $q_{a}(b,z_0+1)=z_0+1>y=q_{a}(b,z_0)$, so $r_{a}(z_0)\leq b$.  In either case, there is a $z\geq z_0$ with $r_{a}(z)\leq b$.  Since this holds for any $z_0$, $\lim_{z\rightarrow\infty}r_{a}(z)\neq\infty$.

Suppose that $\forall a<A\exists b \forall y\exists z\theta(a,b,y,z)$ holds and define $h(a,z)=r_a(z)$.  Then for any $a<A$ there is a $b$ so that $r_a(z)\leq b$ infinitely often, and therefore infinitely many $z$ so that $h(a,z)<b+1$.  Therefore, by assumption, there is a $B$ so that for every $a<A$, $h(a,z)<B$ infinitely often.  Therefore for each $a<A$, there is a $b<B$ so that $\forall y\exists z\theta(a,b,y,z)$.
\end{proof}

\begin{theorem}
  $\mathsf{fSRT}_{<\infty}+\mathsf{RCA}_0$ implies $B\Sigma^0_3$.
\end{theorem}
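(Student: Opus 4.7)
The plan is to work by contradiction, combining the preceding lemma with a carefully designed stable fallow coloring. A preliminary step is to ensure $B\Sigma^0_2$ holds over $\mathsf{RCA}_0+\mathsf{fSRT}_{<\infty}$ so that the preceding lemma applies; this should follow because in the two-color case fallow coincides with transitive, so $\mathsf{fRT}_2$ is $\mathsf{ADS}$, which is known to imply $B\Sigma^0_2$. Assuming $B\Sigma^0_3$ fails, apply the lemma to obtain $h:u\times\mathbb{M}\to\mathbb{M}$ satisfying $\forall x<u\,\exists m\,\exists^\infty z\ h(x,z)<m$ but with no uniform $M$.

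The design of the coloring rests on the following structural observation. Because $(x,z]=(x,y]\cup(y,z]$, any coloring of the form $c(x,z)=\min\{k:\exists z'\in(x,z]\ \phi(k,z')\}$, with $\phi$ independent of $x$, is automatically fallow: the witness set for $(x,z)$ is the union of those for $(x,y)$ and $(y,z)$, so $c(x,z)=\min(c(x,y),c(y,z))\in\{c(x,y),c(y,z)\}$. Stability in $z$ for fixed $x$ comes for free from monotonicity of the witnessing set, using $I\Sigma^0_1$ to see that the resulting non-increasing sequence in $\mathbb{M}$ must stabilize. The natural choice for our setting is
\[c(x,z)=\min_{z'\in(x,z]}\max_{y<u}h(y,z'),\]
so that a homogeneous color $M-1$ on an infinite $S$ would give, in each gap $(s_i,s_{i+1}]$ of $S$, some $z'$ with $h(y,z')<M$ for every $y<u$; since these gaps tile $(\min S,\infty)$ this produces the uniform bound $\forall y<u\,\exists^\infty z\ h(y,z)<M$, contradicting $B\Sigma^0_3$-failure.

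The main obstacle I anticipate is that $\mathsf{fSRT}_{<\infty}$ requires colorings with range in a fixed $[0,a]\subseteq\mathbb{M}$, while $\min_{z'}\max_y h(y,z')$ is a priori unbounded as $(x,z)$ varies. I expect to address this by one of two routes. One is to truncate $h$ at a chosen cap $K$ and apply $\mathsf{fSRT}_{<\infty}$ to the resulting bounded coloring; a homogeneous color strictly below the cap directly yields $M$, while a homogeneous color equal to the cap means $B\Sigma^0_3$ failed badly enough to force us to iterate to a higher cap, and the iteration must terminate under the hypothesis. The other route is to replace the value-valued coloring by an index-valued one such as $c(x,z)=\arg\max_{y<u}h(y,z^*(x,z))$, where $z^*(x,z)\in(x,z]$ minimizes $\max_{y<u}h(y,z')$; this takes values in $[0,u)$ and, by the same interval-decomposition analysis applied to the selection of $z^*$, remains fallow and stable. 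Reconciling the boundedness requirement for $\mathsf{fSRT}_{<\infty}$ with the extraction of an actual uniform $M$ from the homogeneous color is the technical heart of the argument.
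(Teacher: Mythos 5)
Your overall skeleton matches the paper's: reduce to the $h$-formulation of $B\Sigma^0_3$ from the preceding lemma (the $B\Sigma^0_2$ preliminary is essentially right, though since you only have the \emph{stable} principle available the two-colour case gives a stable ADS-type principle, which still implies $B\Sigma^0_2$ by the result the paper cites), define from $h$ an interval-based stable fallow coloring into $[0,u]$, and extract a uniform $M$ from a homogeneous set. But the coloring you propose is the wrong one, and the step you yourself flag as ``the technical heart'' is a genuine gap, not a technicality. Both of your routes are built on $\min_{z'\in(x,z]}\max_{y<u}h(y,z')$, i.e.\ on locating a point where \emph{all} colours are simultaneously small. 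The hypothesis $\forall y<u\,\exists m\,\exists^\infty z\,h(y,z)<m$ provides no such point: take $u=2$, $h(0,z)=0$ for even $z$ and $z$ for odd $z$, and $h(1,z)=0$ for odd $z$ and $z$ for even $z$. Here the desired conclusion holds with $M=1$, yet $\max_y h(y,z)=z$ for every $z$, so your value-valued coloring exceeds every cap $K$ on every interval; the truncated coloring is homogeneous at the cap for all $K$, the proposed iteration never terminates, and ``homogeneous at the cap'' does not signal that $B\Sigma^0_3$ fails (in this example it does not). The same example breaks the extraction in your second route: $z^*(x,z)$ is just the left endpoint plus one, the homogeneous colour only records a parity, and $h\bigl(y_0,z^*(s,s')\bigr)=s+1$ is unbounded, so homogeneity yields no bound at all.

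The missing idea is to swap the two operations: colour each interval by the \emph{index} $f(a,b)=$ the (largest) $x<u$ maximizing $q_{a,b}(x):=\min_{z\in[a,b)}h(x,z)$ --- a max of mins, not a min of maxes. Fallowness holds because $q_{a,c}(x)=\min\{q_{a,b}(x),q_{b,c}(x)\}$ for all $x$; stability follows from $B\Sigma^0_2$ (for fixed $a$ each $q_{a,\cdot}(x)$ stabilizes, and collection over $x<u$ gives a common stabilization point); and, crucially, the extraction works: if $f$ is constantly $x_0$ on an infinite $S$, apply the hypothesis only to the single colour $x_0$ to get $M$ with $h(x_0,z)<M$ infinitely often, then for $s\in S$ pick such a $z>s$ and $s'\in S$ with $s'>z$, so $q_{s,s'}(x_0)<M$ and homogeneity gives $q_{s,s'}(x)\le q_{s,s'}(x_0)<M$ for every $x<u$, i.e.\ every colour drops below $M$ somewhere in $[s,s')$. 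The homogeneous colour of this max-min coloring identifies the colour that is hardest to make small on intervals, which is exactly the quantitative leverage your min-max colorings cannot provide.
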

\begin{proof}
Let $\mathcal{M}$ be a model of $\mathsf{fSRT}_{<\infty}+\mathsf{RCA}_0$ and write $\mathbb{M}$ for the universe of the first-order part.

By \cite{MR2566574}, $B\Sigma^0_2$ holds in $\mathbb{M}$, so we may use the equivalent formulation of $B\Sigma^0_3$ given by the previous lemma.  Consider a function $h:[0,u]\times\mathbb{M}\rightarrow\mathbb{M}$ such that $\forall x<u\,\exists m\,\exists^\infty z\ h(x,y)<m$.  

We define a function $f:[\mathbb{M}]^2\rightarrow[0,u]$ by defining
\[f(a,b)=\max\{x<u\mid \forall x'<u(\min_{z\in[a,b)}h(x',z)\leq \min_{z\in [a,b)}h(x,z))\}.\]
That is, for each $x$, we may let $q_{a,b}(x)=\min_{z\in[a,b)}h(x,z)$, and we choose $f(a,b)$ to be a value of $x$ maximizing $q_{a,b}$.  Since there might be multiple such values of $x$, we choose the largest one (arbitrarily).

$f$ is a $u$-coloring, and is stable and fallow.  It is easy to see that $f$ is fallow: for any $a<b<c$, $q_{a,c}(x)=\min\{q_{a,b}(x),q_{b,c}(x)\}$, so $x$ maximizes both $q_{a,b}$ and $q_{b,c}$ iff $x$ maximizes $q_{a,c}$ as well.

To see that $f$ is stable, fix any $a\in\mathbb{M}$.  For each $x$, there is some minimum value $\min_{[a,\infty)}h(x,z)$, and some $d$ by which this minimum is achieved: $\forall x<u\exists d \forall b\geq d q_{a,b}(x)=q_{a,d}(x)$.  Therefore, by $B\Sigma^0_2$, there is some $D$ so that for all $x<u$, $\forall b\geq D q_{a,b}(x)=q_{a,D}(x)$.  Therefore $f(a,b)=d(a,D)$ for all $b\geq D$.

By $\mathsf{fSRT}^2$, there is an infinite set $S$ so that $f\upharpoonright[S]^2$ is constant.  Let $x_0<u$ be the color which $f$ is constantly equal to on $S$.  By the assumption on $h$, there is an $M\in\mathbb{M}$ so that, for infinitely many $z$, $h(x_0,z)<M$.  Therefore for each $s\in S$, there is a $z>s$ with $h(x_0,z)<M$, and therefore an $s'>z$ with $s'\in S$, so $f(s,s')=x_0$, and then for all $x<u$, $q_{s,s'}(x)\leq q_{s,s'}(x_0)<M$, so there is a $z>s$ with $h(x,z)<M$.
\end{proof}

\section{Conservativity}

In this section we prove that $\mathsf{fEM}_{<\infty}+\mathsf{RCA}_0$ is $\Pi^1_1$-conservative over $\mathsf{RCA}_0+I\Sigma^0_2$.  Again, it suffices to work with the stable version.

\begin{definition}
  $\mathsf{fSEM}_{<\infty}$ holds if whenever $c:[\mathbb{M}]^2\rightarrow[0,a]$ is a stable finite coloring, there is an infinite subset $S\subseteq\mathbb{M}$ such that $c\upharpoonright [S]^{2}$ is fallow.

When $c:[\mathbb{M}]^2\rightarrow[0,d]$ is stable, we write $c_\infty:\mathbb{M}\rightarrow[0,d]$ for the corresponding limit coloring.
\end{definition}

Because $\mathsf{fEM}_{<\infty}$ is equivalent to $\mathsf{fSEM}_{<\infty}$ together with $\mathsf{COH}$ and all these principles are $\Pi^1_2$ statements, by \cite{MR2798907} it suffices to prove that $\mathsf{fSEM}_{<\infty}$ is $\Pi^1_1$-conservative over $\mathsf{RCA}_0+I\Sigma^0_2$.

Throughout this section, we assume we are working in a model of $I\Sigma^0_2$; in particular, unless otherwise specified, all arguments by induction have an inductive statement which is $\Sigma_2$ or $\Pi_2$.  

For the remainder of the section, we fix a stable coloring $c:[\mathbb{M}]^2\rightarrow[0,d]$.

\subsection{Motivation}

Before giving the detailed construction, we outline the main idea.  In \cite{Lerman2013Separating}, a forcing notion was used to construct solutions to instances of the Erd\H{o}s-Moser principle while avoiding the construction of solutions to stable Ramsey's theorem for pairs.  That construction used Matthias conditions $(f,S)$ where $f$ is a finite initial segment of the generic and $S$ is a certain kind of tree of possible extensions (closely related to bounded monotone enumerations \cite{MR3194495,kreuzer:1109.4277,MR3518781}).

Specifically, $S$ is a function where, for each $n$, $S(n)$ is a finite set of subsets where, when $m>n$, each set in $S(m)$ extends a set in $S(n)$.  When we extend $f$, we choose extensions from some subset in $S(n)$.  Not every element of $S(n)$ needs to have an extension in $S(m)$---some elements of $S(n)$ may represent dead ends. Crucially, there is an ``up or out'' property: we are promised that, in $S(m)$, every element of $S(n)$ is either \emph{properly} extended or has no extensions.  (The bad case would be if a finite set stuck around forever without extending: we need to be able to identify dead ends at some finite stage.)

The families that actually occur all have the same form---we partition an interval into components with the property that certain computations do not halt on any extension taking all its elements from a single component.  So when we wish to extend $f$, we take one of these partitions $[s,s+n]=\bigcup_{j<r}g_j$, pick some $j<r$ and some $g'\subseteq g_j$ witnessing a computation, and extend $f$ to $f\cup g'$.  We need to ensure that $\bigcup_{j<r}g_j$ extends to a partition of $[s,\infty)$ (that is, to a partition belonging to our family---one in which every component restricts computations as needed), which is a $\Pi_1$ property, but we also need to ensure that the $j$-th component is infinite in all such partitions.  In order to make the conservation argument, we want the property ``$f\cup g'$ is a possible extension'' to be a $\Sigma_2$ property, but requiring that the $j$-th component is guaranteed to be infinite is a $\Pi_2$ question (saying that we find extensions infinitely often).

To address this, we need to examine the behavior of the construction a bit more carefully.  Suppose we are given a valid partition $[s,s+n]=\bigcup_{j<r}g_j$, and it satisfies the $\Pi_1$ property that, for every $m>n$, there is a valid partition $[s,s+m]=\bigcup_{j<r}h_j$ such that, for each $j<r$, $g_j\subseteq h_j$.  (By a ``valid'' partition, we mean one imposing a suitable restraint on computations.)  The problem is that there might be some $m>n$ and some valid partition $[s,s+m]=\bigcup_{j<r}h_j$ so that $g_j\subseteq h_j$ for each $j<r$, but $\bigcup_{j<r}h_j$ represents a dead end: when $m'>m$, there exists some valid partition, $[s,s+m']=\bigcup_{j<r}h'_j$ with $h_j\subseteq h'_j$ for each $j<r$, but in any such partition, there is some $j<r$ with $h_j=h'_j$.

If we are given the partition $\bigcup_{j<r}h_j$, though, we can verify this fact in a $\Pi_1$ way---that is, the property that there are extensions of this partition for every $m'>m$ is $\Pi_1$, as is the property that there are no extensions in which $h_j$ is properly extended.

More generally, there might be several $h_j$ which are dead ends---that is, there might be an $R\subseteq[0,r)$ so that in any extension to a partition $\bigcup_{j<r}h'_j$, $h_j=h'_j$ for all $j\in R$ at once.

Given $\{h_j\}_{j<r},R$, we can verify this property in a $\Pi_1$ way, but we cannot check that $R$ is maximal---that is, there could still be some $j\in[0,r)\setminus R$ which will cause problems.  

Our solution is to say that we find an extension of $f$ if we have $[s,s+m]=\{h_j\}_{j<r},R$ and, for \emph{each} $j\in[0,r)\setminus R$, and extension $f'_j\subseteq h_j$ so that:
\begin{itemize}
\item $f\cup f'_j$ witnesses some $\Sigma_1$ property (say, that some computation halts),
\item for every $m'>m$, there is a valid partition $[s,s+m']=\{h'_j\}_{j<r}$ with $h_j\subseteq h'_j$ for all $j<r$ and $h_j=h'_j$ for all $j\in R$.
\end{itemize}
Then the existence of an extension becomes a $\Sigma_2$ property: we can find extensions if there is some $\{h_j\}_{j<r},R$ which is valid, $\{h_j\}_{j<r}$ extends to a partition of $[s,\infty)$, and in every such partition, the $h_j$ for $j\in R$ do not extend, and we can find suitable extensions of $f$ in every $h_j$ with $j\in[0,r)\setminus R$.  We cannot verify that $R$ is maximal---there might be other branches in $[0,r)\setminus R$ which are dead ends---but it does no harm to find possible extensions of $f$ in dead ends which we later discard.  What we are promising is that for each $j$, \emph{either} $j$ is a dead end \emph{or} we can find the witness we need in $h_j$, without worrying about the fact that there may be an overlap between these cases.

We end up needing to create a tree of extensions, to keep track of all the possible ways $f$ might extend in different branches.  However we will be able to control this branching by thinning it out unboundedly often.  This will let us show that the tree has a unique infinite branch, and we will arrange for the unique branch of this tree to be the set on which the coloring is transitive and satisfy $I\Sigma_2$.

\subsection{Families of Partitions}

The restraint in our Matthias conditions will be a ``family of partitions''; we now define this notion and what it means for one family of partitions to refine another.

\begin{definition}
A \emph{family of partitions of size $r$} is a function $S(n)$ such that, for some $s=\min S$:
\begin{itemize}
\item for each $n$, $S(n)$ is a set of partitions of $[s,s+n]$ into $r$ sets,
\item whenever $\{f_j\}_{j< r}\in S(n)$ and $m<n$, $\{f_j\cap[s,s+m]\}_{j<r}\in S(m)$.
\end{itemize}

We say $\{f_j\}_{j<r}\subseteq\{g_j\}_{j<r}$ if for all $j<r$, $f_j\subseteq g_j$.



\end{definition}





\begin{definition}
  We say $S$ is \emph{infinite} if, for every $n$, $S(n)$ is non-empty.  We say $S$ is \emph{extensive} if whenever $\{g_j\}_{j<r}\in S(n)$, there is an $m>n$ so that for any $\{h_j\}_{j<r}\in S(m)$ with $\{g_j\}_{j<r}\subseteq \{h_j\}_{j<r}$, $g_j\neq h_j$ for all $j<r$.

When $\{g_j\}_{j<r}\in S(n)$, we say $\{g_j\}_{j<r}$ is \emph{permanent (in $S$)} if, for all $m\geq n$, there is an $\{h_j\}_{j<r}\in S(m)$ with $\{g_j\}_{j<r}\subseteq\{h_j\}_{j<r}$.
\end{definition}
Note that being infinite is a $\Pi_1$ property while being extensive is a $\Pi_2$ property.

\begin{lemma}
  If $\{g_j\}_{j<r}\in S(n)$ is permanent in $S$ then for every $m\geq n$ there is a permanent $\{h_j\}_{j<r}\in S(m)$ with $\{g_j\}_{j<r}\subseteq\{h_j\}_{j<r}$.
\end{lemma}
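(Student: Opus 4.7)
My plan is to prove this by a straightforward (finite) König-style argument, carried out uniformly at level $m$. Fix a permanent $\{g_j\}_{j<r}\in S(n)$ and $m\geq n$. Because a partition of $[s,s+m]$ into $r$ sets is coded by a function $[s,s+m]\to[0,r)$, the collection
\[T_m=\{\{h_j\}_{j<r}\in S(m)\mid \{g_j\}_{j<r}\subseteq\{h_j\}_{j<r}\}\]
is a bounded, hence finite, subset of the model, say $T_m=\{\{h^{(i)}_j\}_{j<r}:i<k\}$. I want to show that some $\{h^{(i)}_j\}_{j<r}$ is permanent.

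Suppose for contradiction that no member of $T_m$ is permanent. Then for every $i<k$ there is an $M_i\geq m$ such that \emph{no} element of $S(M_i)$ refines $\{h^{(i)}_j\}_{j<r}$ (here ``not permanent'' is $\Sigma_1$, so this choice is unproblematic; moreover, since $k$ is a standard-style finite bound on $T_m$, we can just take the maximum without appealing to $B\Sigma_1$). Set $M=\max_{i<k}M_i$. Now use permanence of $\{g_j\}_{j<r}$ at level $M$ to obtain some $\{f_j\}_{j<r}\in S(M)$ with $\{g_j\}_{j<r}\subseteq\{f_j\}_{j<r}$. The ``restriction'' clause in the definition of a family of partitions forces $\{f_j\cap[s,s+m]\}_{j<r}\in S(m)$, and since it refines $\{g_j\}_{j<r}$ at level $m$, it equals some $\{h^{(i)}_j\}_{j<r}\in T_m$. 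But then $\{f_j\}_{j<r}\in S(M)$ refines $\{h^{(i)}_j\}_{j<r}$ with $M\geq M_i$, contradicting the choice of $M_i$.

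I do not expect a serious obstacle here: the lemma is essentially the observation that in a finitely branching rooted tree (here, the tree of refinements of $\{g_j\}_{j<r}$ inside $S$), if the root has infinitely many descendants at every level then at each level some node on the path does as well. The only point to watch is the logical complexity. Since $T_m$ has a primitive recursive bound in $m$ and $r$, no bounding principle beyond what $\RCA_0$ proves is needed to aggregate the $M_i$; the ``not permanent'' clauses are $\Sigma_1$, so picking each $M_i$ is a bounded $\Sigma_1$ choice, and no induction stronger than what is available is invoked. The argument therefore goes through in the ambient theory the section assumes.
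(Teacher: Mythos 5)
Your proof is correct and takes essentially the same approach as the paper's: both exploit the finiteness of the set $T_m$ of candidate extensions of $\{g_j\}_{j<r}$ at level $m$ together with permanence of $\{g_j\}_{j<r}$ at a large enough level to conclude that some candidate survives (the paper phrases it directly, by choosing $m'\geq m$ minimizing the number of members of $T_m$ with extensions in $S(m')$; you run the same combinatorics as a reductio). One small inaccuracy: $k=|T_m|$ is \emph{not} a ``standard-style finite bound''---it is of order $r^{m+1}$ and $m$ may be nonstandard---so aggregating the $\Sigma_1$-chosen witnesses $M_i$ into $\max_i M_i$ genuinely uses $B\Sigma^0_1$; this causes no trouble since $B\Sigma^0_1$ follows from $I\Sigma^0_1$, available in $\RCAo$, and a fortiori in the $I\Sigma^0_2$ context of this section.
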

\begin{proof}
Let $m\geq n$ be given.  There are only finitely many $\{h_j\}_{j<r}\in S(m)$ with $\{g_j\}_{j<r}\subseteq\{h_j\}_{j<r}$, so we may choose $m'\geq m$ minimizing the number of $\{h_j\}_{j<r}\in S(m)$ such that there exists an $\{h'_j\}_{j<r}\in S(m')$ with $\{h_j\}_{j<r}\subseteq\{h'_j\}_{j<r}$.

There must be some $\{h'_j\}_{j<r}\in S(m')$ with $\{g_j\}_{j<r}\subseteq\{h'_j\}_{j<r}$, and we see that $\{h_j\}_{j<r}=\{h'_j\cap [\min S,\min S+m]\}_{j<r}$ must be permanent, because if there were any $m''\geq m'$ with no extension of $\{h_j\}_{j<r}$ then $m''$ would contradict the minimality in the choice of $m'$.
\end{proof}

\begin{definition}
  If $S$ is a family of partitions of size $r$ and $S'$ is a family of partitions of size $r'\geq r$, we say $S'$ \emph{refines} $S$ \emph{via} $\rho$ if $s\leq s'$, $\rho:[0,r')\rightarrow[0,r)$ and, for every $n$ and every $\{f_j\}_{j<r'}\in S'(n)$ there is a $\{g_j\}_{j<r}\in S(n+(s'-s))$ such that, for each $j<r$, $g_j\cap[s',s'+n]=\bigcup_{j'\in\rho^{-1}(j)}f_{j'}$.  We $S'$ \emph{surjectively refines} $S$ if $\rho$ is surjective.

\end{definition}
It is easy to see that if $S''$ refines $S'$ via $\rho'$ and $S'$ refines $S$ via $\rho$ then $S''$ refines $S$ via $\rho\circ\rho'$.

\begin{lemma}\label{thm:partition_extensive}
  If $S$ is infinite then there is an extensive $S'$ refining $S$.
\end{lemma}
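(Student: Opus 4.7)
The plan is to construct $S'$ as a subfamily of $S$ (with $r' = r$, $s' = s$, and $\rho = \mathrm{id}$) by imposing a condition that directly encodes extensivity. Concretely, define
\[
S'(n) = \{\{g_j\}_{j<r} \in S(n) : \exists m > n\ \forall \{h_j\}_{j<r} \in S(m)\ (\{g_j\} \subseteq \{h_j\} \Rightarrow g_j \neq h_j \text{ for all } j < r)\}.
\]
Since each $S(m)$ is a finite set of partitions, the witnessing condition is $\Sigma^0_1$ in $n$ and $\{g_j\}$, and extensivity then holds by construction; the refinement condition via $\rho = \mathrm{id}$ is immediate.

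The heart of the argument is showing $S'$ is infinite, i.e., $S'(n) \neq \emptyset$ for every $n$. I would aim to prove that every permanent $\{g_j\} \in S(n)$---which exists at every level by the previous lemma---admits a witnessing $m$ and so lies in $S'(n)$. If some permanent $\{g_j\}$ did not admit such an $m$, then for every $m > n$ some extension $\{h_j\} \in S(m)$ of $\{g_j\}$ leaves at least one component equal to $g_j$; by $B\Sigma^0_2$ together with the finiteness of $r$, a single index $j_0 < r$ would be responsible at arbitrarily large levels.

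The main obstacle is deriving a contradiction from the existence of such a stuck index $j_0$. If the direct approach does not close, the fallback is to relax the constraint $r' = r$ and instead let the refinement enlarge the partition size: split $g_{j_0}$ off in a refinement with $r' > r$, treating the stuck component as its own part that will not grow (a vacuous dead end from the standpoint of extensivity), and reduce to a family with one fewer ``active'' part. Since $r$ is finite, the iteration terminates after at most $r$ steps. Formalizing this inside $\mathsf{RCA}_0 + I\Sigma^0_2$ requires care because ``eventually stuck'' is a $\Sigma^0_2$ property, and the appropriate framework is $\Sigma^0_2$-induction on $r$, combined with the previous lemma to ensure the subfamilies we produce remain non-empty at every level.
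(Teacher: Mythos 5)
Your first candidate $S'$ has two problems. One you notice yourself: $S'(n)$ might be empty because of stuck indices. The other you do not: your $S'$ need not be a family of partitions in the paper's sense, since downward closure can fail --- if $\{g_j\}\in S'(n)$ with witness $m$, then an extension $\{h_j\}\in S(m)$ of $\{g_j\cap[s,s+m']\}$ (for $m'<n$) need not pass through $\{g_j\}$, so the witnessing $m$ for $\{g_j\}$ does nothing for the restriction.

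The deeper issue is the fallback. You propose to \emph{enlarge} the partition size, splitting off the stuck component $g_{j_0}$ as its own part and regarding it as ``a vacuous dead end from the standpoint of extensivity.'' But extensivity is not vacuous for a frozen component: by definition, for $\{g_j\}_{j<r}\in S(n)$ you must find an $m>n$ such that \emph{every} extension $\{h_j\}_{j<r}\in S(m)$ has $g_j\neq h_j$ \emph{for all} $j<r$. A component that never grows makes this fail at every level, regardless of how you bookkeep it. (Even if you try to shift $\min S'$ past the stuck value so that the frozen component becomes empty on the new interval, it is then stuck at $\emptyset$ and still violates the clause.) The correct move is the opposite one: \emph{remove} the dead-end components from the family, i.e., pass to a family on fewer coordinates. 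The ``$r'\geq r$'' in the paper's definition of refinement is deceptive here; the construction in the proof in fact produces a family of size $r-|R|$, with the frozen coordinates handled by choosing $\min S'$ larger than their terminal values, so that $g_j\cap[s',s'+n]=\emptyset=\bigcup_{j'\in\rho^{-1}(j)}f_{j'}$ for $j\in R$.

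Concretely, the paper does this in one step rather than your proposed iteration: consider tuples $(R,\{f_j\}_{j\in R})$ such that for all sufficiently large $n$ some $\{g_j\}_{j<r}\in S(n)$ has $g_j=f_j$ for every $j\in R$ simultaneously. This is a $\Sigma_2$ property, so $I\Sigma^0_2$ gives a maximal such $R$ (with witnessing $\{f_j\}$). Define $S'$ of size $r-|R|$ by taking those partitions of $[s',s'+m]$ ($s'>\max_{j\in R}f_j$) that arise from a $\{h_j\}_{j<r}\in S(m)$ with $h_j=f_j$ for $j\in R$. Infiniteness of $S'$ is immediate from the choice of $R$ and $\{f_j\}$, and extensiveness is exactly the maximality of $R$: if $S'$ failed to be extensive via some $\{g_{j'}\}$ and some persistently-stuck coordinate, you could add another index (with its terminal value) to $R$, contradicting maximality. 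Your iterative version (peel off one stuck index at a time by $\Sigma^0_2$-induction on $r$) could be made to work, but each step must \emph{drop} the stuck coordinate rather than retain it, and you must also ensure that what you pass down is genuinely a family of partitions and still infinite at each step; the one-shot $\Sigma_2$-maximality argument packages all of that.
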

\begin{proof}
  Consider those $\{f_j\}_{j\in R}$ so that, for all sufficiently large $n$, there is a $\{g_j\}_{j<r}\in S(n)$ with $f_j=g_j$ for all $j\in R$.  The existence of such an $\{f_j\}_{j\in R}$ is a $\Sigma_2$ property, so we may choose $R$ maximal so that such an $\{f_j\}_{j\in R}$ exists, and then define $S'$ to be the family of partitions of size $r-R$ by choosing $\min S'$ to be larger than $\max_{j\in R}f_j$, choosing $\rho:[0,r-R-1]\rightarrow[0,r-1]$ so that the image of $\rho$ is $[0,r-1]\setminus R$ and placing $\{g_j\}_{j<r-R}\in S'(m)$ if there is some $\{h_j\}_{j<r}\in S(m)$ such that:
  \begin{itemize}
  \item for $j\in R$, $h_j=f_j$,
  \item for $j\not\in R$, $h_j\cap[s,m]=g_{\rho^{-1}(j)}$.
  \end{itemize}

The fact that $S'$ is infinite follows since, for all sufficiently large $n$, there is a $\{g_j\}_{j<r}\in S(n)$ with $f_j=g_j$ for all $j\in R$, so $\{g_{\rho(j)}\cap[s,n]\}_{j<r-R}\in S'(n)$, and the fact that $S'$ is extensive follows since $R$ is maximal.
\end{proof}

\begin{lemma}\label{thm:refine_partitions}
 Let $S$ be extensive and let $S^*$ be infinite with $\min S=\min S^*$.  Let $j<\min S$ be given and suppose that, for every $n$ there is a $\{g_j\}_{j<r}\in S(n)$ and an $\{h_{j^*}\}_{j^*<r^*}$ such that $\bigcup_{0<j^*<r^*}h_{j^*}=g_j$.  Then there is an extensive $S'$ which is a common refinement of $S$ and $S^*$ and a surjective refinement of $S$.
\end{lemma}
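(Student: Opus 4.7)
The plan is to first construct a raw common refinement $S_0$ from pairs $(\{g_l\}_{l<r},\{h_{j^*}\}_{j^*<r^*})$ whose only interaction is the constraint that their parts align on the specified index $j$, and then invoke Lemma~\ref{thm:partition_extensive} to prune $S_0$ down to an extensive $S'$. The delicate step is to confirm that this pruning does not remove an entire $S$-fibre; this is where extensivity of $S$ enters.

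Write $s=\min S=\min S^*$, set $r' = r + r^* - 2$, and let $\rho:[0,r')\to[0,r)$ send the first $r-1$ indices bijectively to $[0,r)\setminus\{j\}$ (in increasing order) and the last $r^*-1$ indices all to $j$. Declare $\{f_{j'}\}_{j'<r'}\in S_0(n)$ iff there exist $\{g_l\}_{l<r}\in S(n)$ and $\{h_{j^*}\}_{j^*<r^*}\in S^*(n)$ such that $f_{j'}=g_{\rho(j')}$ for $j'<r-1$, $f_{j'}=h_{j'-r+2}$ for $j'\geq r-1$, and $\bigcup_{0<j^*<r^*}h_{j^*}=g_j$. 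The partition-family axioms for $S_0$ follow from those of $S$ and $S^*$, and the displayed hypothesis of the lemma is precisely that $S_0(n)$ is non-empty for every $n$, so $S_0$ is infinite. By construction $S_0$ refines $S$ via the surjective $\rho$, and refines $S^*$ via the map sending the first $r-1$ indices to $0$ and the remaining $r^*-1$ bijectively onto $\{1,\dots,r^*-1\}$.

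Now apply Lemma~\ref{thm:partition_extensive} to obtain an extensive $S'$ refining $S_0$; by transitivity of refinement, $S'$ is a common refinement of $S$ and $S^*$. Recall that in the proof of that lemma, $S'$ is produced by deleting a maximal ``dead'' set of indices $R\subseteq[0,r')$, meaning there is a fixed tuple $\{f_{j'}^{*}\}_{j'\in R}$ such that for all sufficiently large $n$ some member of $S_0(n)$ has $f_{j'}=f_{j'}^{*}$ for all $j'\in R$. The surjectivity of the composite refinement $S'\to S$ amounts to showing that no entire fibre $\rho^{-1}(l)$ lies inside $R$.

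The main obstacle is exactly this fibre claim, and it is where we use the extensivity of $S$. Suppose toward contradiction that $\rho^{-1}(l)\subseteq R$ for some $l<r$. Setting $g_l^{*}=f_{j'}^{*}$ when $l\neq j$ (so that $\{j'\}=\rho^{-1}(l)$) and $g_l^{*}=\bigcup_{0<j^*<r^*}f_{r-2+j^*}^{*}$ when $l=j$, the compatibility built into $S_0$ forces the underlying $S$-partition in each dead pair to have its $l$-th part equal to the fixed set $g_l^{*}$. Fix $n_0$ large enough that $g_l^{*}\subseteq[s,s+n_0]$ and let $P\subseteq S(n_0)$ be the finite collection of partitions whose $l$-th part equals $g_l^{*}$. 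Apply extensivity of $S$ to each element of $P$ and take the maximum of the resulting stages to find a single $m>n_0$ such that every partition in $S(m)$ extending any element of $P$ has $l$-th part strictly larger than $g_l^{*}$. But for $n\geq m$, a dead-pair witness at level $n$ restricts to some $\{\tilde g_l\}_{l<r}\in S(m)$ whose $l$-th part is still $g_l^{*}$ (since $g_l^{*}\subseteq[s,s+n_0]\subseteq[s,s+m]$), while further restricting to $[s,s+n_0]$ lands in $P$; this contradicts the extensivity conclusion just obtained. Hence no fibre is dead, $S'\to S$ is surjective, and the construction is complete.
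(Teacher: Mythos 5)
Your proof is correct and takes essentially the same route as the paper's: build the raw common refinement $S_0$ from compatible pairs, prune to an extensive $S'$ via Lemma~\ref{thm:partition_extensive}, and use extensivity of $S$ to guarantee the dead index set cannot absorb a whole $\rho$-fibre (the paper records this as $R\subsetneq\rho_0^{-1}(j)$, which your argument establishes as well, since for $l\neq j$ the fibre is a singleton). You supply a more explicit justification of the surjectivity step than the paper, which only asserts it, but the underlying mechanism is identical.
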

We are really asking about ways of partitioning $g_j$---we are interested in elements of $S^*$ where $h_0=\bigcup_{j'\neq j}g_{j'}$ and the other $r^*-1$ pieces give a partition of $g_j$.
\begin{proof}
  Choose $\rho_0:[0,r+r^*-2)\rightarrow[0,r)$ so that $|\rho_0^{-1}(j)|=r^*-1$ and $|\rho_0^{-1}(j')|=1$ for $j'\neq j$.  Fix $\rho_0^*:[0,r+r^*-2)\rightarrow[0,r^*)$ surjective so that $\rho_0^*(j')=0$ iff $\rho_0(j')\neq j$.

Define $S'_0(n)$ to consist of those $\{g'_{j'}\}_{j'<r+r^*-2}$ such that there is some $\{g_j\}_{j<r}\in S(n)$ and some $\{h_{j^*}\}_{j^*<r^*}\in S^*(n)$ such that:
  \begin{itemize}
  \item if $\rho_0(j')\neq j$ then $g'_{j'}=g_{\rho_0(j')}$, and
  \item if $\rho_0(j')=j$ then $g'_{j'}=h_{\rho_0^*(j')}$.
  \end{itemize}

We then refine $S'_0$ to an extensive $S'$ as in the previous lemma; since $S$ is already extensive, we are assured that when we choose $\{f_{j'}\}_{j'\in R}$ as in the previous lemma, $R\subsetneq \rho_0^{-1}(j)$, so the refinement of $S$ is surjective.
\end{proof}



\subsection{Conditions}

\begin{definition}
  By a \emph{tree of finite sets}, we mean a function $\mathcal{F}$ such that:
  \begin{itemize}
  \item for each $n$, $\mathcal{F}(n)$ is a finite collection of finite sets,
  \item each $F(n)$ is non-empty.
  \item for each $f\in\mathcal{F}(n+1)$, $f\cap[0,\max\mathcal{F}(n)]\in\mathcal{F}(n)$,
  \item for each $f\in\mathcal{F}(n+1)$, $f\neq f\cap[0,\max\mathcal{F}(n)]$.
  \end{itemize}
\end{definition}
We will reserve caligraphic $\mathcal{F}$ for unbounded trees (that is, where the domain is unbounded), and write $F$ when the tree is finite---that is, when $\dom(F)=[0,k]$ for some $k$.

\begin{definition}
  When $F$ is a tree of finite sets with domain $[0,k]$, a \emph{branch} of $F$ is an element of $F(k)$.

  When $\mathcal{F}$ is a tree of finite sets with domain $\mathbb{M}$, a \emph{path} of $\mathcal{F}$ is an unbounded set $\Lambda$ such that, for each $n\in\mathbb{M}$, $\Lambda\cap[0,\max\mathcal{F}(n)]\in\mathcal{F}(n)$.
\end{definition}

Our construction will produce an unbounded tree of finite sets which preserves $I\Sigma_1$; in particular, this ensures that $\mathcal{F}$ has an unbounded path.  Our construction will also ensure that every unbounded path of $\mathcal{F}$ preserves $I\Sigma_2$ and is a set on which $c$ is fallow.

In order to manage the syntactic complexity of the definition, we need to introduce our forcing conditions in stages.  The first stage, the pre-pre-condition, captures the computable part of our definition.

\begin{definition}
  A \emph{pre-pre-condition} is a tuple $(F,F_\dagger,c_*,S,u,U,W,V)$ where:
  \begin{enumerate}[label=c.1.\arabic{enumi}]
  \item $F$ and $F_\dagger$ are trees of finite sets with domain $[0,k]$,
  \item for each $n$, $F_\dagger(n)\subseteq F(n)$,
  \item if $f\in F(n)$ then $c$ is fallow on $f$,
  \item $c_*: \bigcup_n\bigcup F(n)\rightarrow \{0,1\}$ is a function such that for any $f\in F(n)$ and $a,b\in f$, $c_*(a)\in\{c_*(b),c(a,b)\}$,
  \item $S$ is a family of partitions of size $r$, $\max F<\min S$,
  \item $u$ is a map from $[0,r)$ to the branches of $F$,
  \item $\emptyset\neq U\subseteq [0,r)$,
  \item $u\upharpoonright U$ is a map to the branches of $F_\dagger$,
  \item $W=(W_0,\ldots,W_w)$ is a sequence such that each $W_i$ is a collection $W_i^x$ of finite sets of the form $\{f\mid \text{for each }x'\leq x\text{ there is some initial segment }g\sqsubseteq f\text{ such that }g\in W^*_{i,x'}\}$ where $W^*_i$ is uniformly computable,
  \item $V$ is a finite collection of pairs $(i,x)$ with $i\leq w$,
  \item if $f$ is a branch of $F_\dagger$ then $f\in\bigcap_{(i,x)\in V}W_i^x$.
  \end{enumerate}

\end{definition}
Being a pre-pre-condition is a computable property.  We write $W\upharpoonright i$ for the list $(W_0,\ldots,W_{i-1})$ and $V\upharpoonright i=\{(j,x)\in V\mid j<i\}$.

This definition is rather complicated, and needs some explanation. 

The first three pieces, $F,F_\dagger,c_*$, describe the finite part of our condition---the finite initial segment of our eventual generic.  $F_\dagger$ is the actual tree we are constructing: our goal is to simultaneously construct $F_\dagger$ so that it preserves $I\Sigma_1$ and so that each branch preserves $I\Sigma_2$ and gives subset on which our coloring is fallow.  The tree $F$ is wider than $F_\dagger$---it may have additional branches---and exists for technical bookkeeping reasons.  $c_*$ is a guess at the limit coloring $c_{\infty}$.

The next three pieces, $S,u,U$, describe a Matthias restraint.  $S$ is the restraint: branches of $F$ (and, in particular, of $F_\dagger$) should only be extended within a single component of $S$.  The function $u$ tells us which component of $S$ can be used to extend a branch of $F$; this is defined so that each branch $f\in F(k)$ has a set $u^{-1}(f)$ of components in which it is allowed to extend.  In an extension of this pre-pre-condition, we will require that all extensions of $f$ be contained some $g_j$ with $j\in u^{-1}(f)$.  $U\subseteq[0,r)$ is the set of ``live'' extensions---the ones which will actually be part of $F_\dagger$.

The final two pieces, $W,V$, describe our progress towards matching various requirements.  We need these as part of our conditions because we will have to use them to help track which branches of certain partitions are live.  (This is one of the main new complications in the construction.)   Roughly speaking, each $W_i^x$ represents some condition we are putting on our eventual path; eventually we should have $\Lambda\in\bigcap_{i,x}W_i^x$.  In practice, $W^*_{i,x}$ will have the form $\{h\mid \exists z\leq |h|,h'\subseteq h\phi(x,z,h')\}$, so $f\in W_i^x$ will mean that, for all $x'\leq x$, there is an initial segment $f'\subseteq f$ and a $z\leq|f|$ so that $\phi(x,z,f')$ holds for some quantifier-free formula $\phi$.

$V$ is the set of pairs $(i,x)$ of conditions which we have already succeeded in enforcing; thus $f\in\bigcap_{(i,x)\in V}W^x_i$.

\begin{definition}
  We say $(F',F'_\dagger,c'_*,S',u',U',W',V')\preceq_\rho (F,F_\dagger,c_*,S,u,U,W,V)$ if:
  \begin{itemize}
  \item $\dom(F)\subseteq\dom(F')$,
  \item for all $n\in\dom(F)$, $F'(n)=F(n)$ and $F'_\dagger(n)=F_\dagger(n)$,
  \item $c_*\subseteq c'_*$,
  \item $S'$ refines $S$ via $\rho$,
  \item for $j'<r'$, $u'(j')$ extends $u(\rho(j'))$,
  \item for every $\{g_{j'}\}_{j'<r'}\in S'(n)$, there is an $\{h_j\}_{j<r}\in S'(n+\min S'-\min S)$ so that for each $j'<r'$, $(u'(j')\setminus u(\rho(j')))\cup g_{j'}\subseteq h_j$,
  \item if $j'\in U'$ then $\rho(j')\in U$,
  \item $W\sqsubseteq W'$,
  \item $V\subseteq V'$.
  \end{itemize}

  We say $(F',F'_\dagger,c'_*,S',u',U',W',V')\preceq (F,F_\dagger,c_*,S,u,U,W,V)$ if there is some $\rho$ so that $(F',F'_\dagger,c'_*,S',u',U',W',V')\preceq_\rho (F,F_\dagger,c_*,S,u,U,W,V)$.
\end{definition}

\begin{definition}
  We say a pre-pre-condition $(F,F_\dagger,c_*,S,u,U,W,V,\theta)$ is a \emph{pre-condition} if:
  \begin{enumerate}[label=c.2.\arabic{enumi}]
  \item for $a\in \bigcup_n\bigcup F(n)$ and $b\geq\min S$, $c(a,b)=c_*(a)=c_{\infty}(a)$,
  \item $S$ is infinite,
  \item for any $j<r$ and $(i,x)\in V$, if $u(j)\not\in W_i^x$ then whenever $\{g_j\}_{j<r}\in S(n)$ and $g'\subseteq g_j$, $u(j)\cup g_j\not\in W_i^x$.
  \end{enumerate}
\end{definition}
Note that being a pre-condition is a $\Pi_1$ property.  The first property says that our guess $c_*$ is correct; combined with the definition of a pre-pre-condition, this implies that any $f\in F(n)$ is a candidate to be part of a set on which $c$ is fallow.

The third property says that if a branch can be extended to belong to $W_i^x$, it already belongs to $W_i^x$.

\begin{definition}
  We say a pre-condition $(F,F_\dagger,c_*,S, u,U,W,V)$ is a \emph{condition} if:
  \begin{enumerate}[label=c.3.\arabic{enumi}]
  \item $S$ is extensive,
  \item for any $i\leq w,x$, whenever $(F',F'_\dagger,c'_*,S',u',U',W\upharpoonright i,V')\preceq_\rho (F,F_\dagger,c_*,S, u,U,W\upharpoonright i,V\upharpoonright i)$ is a pre-condition, there is a level $n$ so that for every $\{g_j\}_{j<r'}\in S'(n)$ and every partition $g_j=\bigcup_{d'\leq d}g_j^{d'}$, there is a $j<r'$ so that $\rho(j)\in U$, $u'(j)$ is contained in $\bigcap_{(i',x')\in V'}W_{i'}^{x'}$, and there is a $d'\leq d$ and $g'\subseteq g_j^{d'}$ so that $u'(j)\cup g'\in W_i^x$ and $c$ is fallow on $u'(j)\cup g'$.\label{cond_cond_extension}
  \end{enumerate}
\end{definition}
Being a condition is a $\Pi_2$ property.  When considering the pre-condition $(F',F'_\dagger,c'_*,S',u',U',W\upharpoonright i,V')$, the only terms that matter are $F'$, $S'$, $u'$, and $V'$---the others can either be inferred (for example, $c'_*$ must be a suitable restriction of $c_\infty$ and so on), or not not matter (there may be multiple choices for $F'_\dagger$ and $U'$, but they do not affect whether the statement holds)---so we often write $(F',S',u',V')\preceq_\rho (F,F_\dagger,c_*,S, u,U,W\upharpoonright i,V\upharpoonright i)$.

The significance of the second property is that says that we can satisfy $W_i^x$ ``densely'': given any extension $(F',S',u',V')$, we can find suitable branches which extend so they belong to $W_i^x$.

\subsection{Tree Generics}

\begin{definition}
A sequence of conditions (indexed by $M$)
\[(F_0,F_\dagger^0,c_{*}^0,S_0,u_0,U_0,W_0,V_0,\theta_0)\succeq (F_1,F_\dagger^1,c_*^1,S_1,u_1,U_1,W_1,V_1,\theta_1)\succeq\cdots\]
is \emph{generic} if:
\begin{itemize}
\item the sequence is coded (i.e. each bounded initial segment is encoded by an element of $M$),
\item $\bigcup F_i$ has domain $\mathbb{M}$,
\item $\bigcup V_i=\mathbb{M}\times \mathbb{M}$, and
\item for every $i\in\mathbb{M}$ there is a $j\geq i$ with $|U_j|=1$.
\end{itemize}
We say the sequence \emph{begins with} $(F_0,F_\dagger^0,c_*^0,S_0,u_0,U_0,W_0,V_0)$.


\end{definition}

We must show that there exist generic sequences at all; specifically, that we can always extend a condition to extend the domain of $F$, and that we can always add elements to $V$.  (Technically, we must also ensure that the $W_i$ represent longer and longer sequences, so that for each $m\in\mathbb{M}$ there is an $i$ so that $W_i$ is a sequence of length $\geq m$, but if all we want to do is construct a generic, this could be accomplished by padding the $W_i$ with trivial sets.  In the next subsection we will show that we can construct generics where the $W_i$ are chosen in a more useful manner.)

\begin{lemma}
  Let $(F,F_\dagger,c_*,S, u,U,W,V)$ be a condition and let $i\leq |W|$.  Then for any $x$, there is a condition $(F',F'_\dagger,c'_*,S', u',U',W,V\cup\{(i,x)\})\preceq (F,F_\dagger,c_*,S, u,U,W,V)$ with the domain of $F'$ strictly larger than the domain of $F$.
\end{lemma}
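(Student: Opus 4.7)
The plan is to extract the required extension from the density property (c.3.2) built into the definition of a condition. Since $(F,F_\dagger,c_*,S,u,U,W,V)$ is a condition, applying (c.3.2) to the trivial pre-condition extension of itself---taking $(F',F'_\dagger,c'_*,S',u',U',W\upharpoonright i,V')=(F,F_\dagger,c_*,S,u,U,W\upharpoonright i,V)$ and $\rho=\mathrm{id}$---with the prescribed $(i,x)$ yields a level $n$ such that, for every $\{g_j\}_{j<r}\in S(n)$ and every subdivision $g_j=\bigcup_{d'\le d}g_j^{d'}$, there is some $j^*\in U$ with $u(j^*)\in\bigcap_{(i',x')\in V}W_{i'}^{x'}$, some $d^*\le d$, and some $g'\subseteq g_{j^*}^{d^*}$ with $u(j^*)\cup g'\in W_i^x$ and $c$ fallow on $u(j^*)\cup g'$.

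I would pick a \emph{permanent} $\{g_j\}_{j<r}\in S(n)$ (such a partition exists because each $S(m)$ is finite while $S$ is infinite) and subdivide each $g_j$ by the limit coloring, $g_j^{d'}=\{a\in g_j\,:\,c_\infty(a)=d'\}$. Forcing $c_\infty$ to be constant on $g'$ in this way, together with (c.2.1) for the original condition, is precisely what guarantees the compatibility property (c.1.4) after extending $c_*$ to $c'_*(a)=c_\infty(a)$ for $a\in g'$. Fix $j^*,d^*,g'$ as supplied by (c.3.2). Extend $\dom F$ by one level with $F'(k+1)=F'_\dagger(k+1)=\{u(j^*)\cup g'\}$; choose $\min S'$ larger than $\max g'$ and past the $c$-stability threshold of every $a\in g'$ (so that (c.2.1) survives); set $V'=V\cup\{(i,x)\}$; and take $S'$ to be an extensive refinement, obtained via Lemma~\ref{thm:partition_extensive}, of the family $S^{(0)}(m)$ whose elements are the restrictions to $[\min S',\min S'+m]$ of those partitions in $S(m+\min S'-\min S)$ which extend $\{g_j\}_{j<r}$. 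Finally, define $u'$ and $U'$ so that $U'$ has a single index $j^*_{\mathrm{new}}$ with $u'(j^*_{\mathrm{new}})=u(j^*)\cup g'$, and lift the projection to a refinement map $\rho$.

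The verifications that remain are essentially routine: (c.1.*), (c.2.1), and (c.2.2) fall out of the construction; (c.2.3) for old pairs in $V$ is inherited, because every $S'$-component is the restriction of an $S$-component; (c.2.3) for the new pair $(i,x)$ is vacuous, since the only branch of $F'_\dagger$ is $u(j^*)\cup g'\in W_i^x$; (c.3.1) is delivered by Lemma~\ref{thm:partition_extensive}; and (c.3.2) for the new condition is inherited from the old, since any pre-condition extending the new condition is, by transitivity of $\preceq$, also a pre-condition extending the old.

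The main obstacle is arranging the refinement map $\rho$ and the map $u'$ coherently. Non-pruning of the $j^*$ coordinate by Lemma~\ref{thm:partition_extensive} is not automatic but follows from the extensivity of $S$: for any $\{h_j\}\in S(m)$ extending $\{g_j\}$, the $j^*$-th component must strictly grow under further extensions, and this growth lies entirely in $[\min S',\infty)$ once $m$ is large, so the $j^*$-component cannot be stuck. The subtler point is that the requirement ``$u'(j')$ extends $u(\rho(j'))$'' may force us either to exclude from $\rho$'s image those $j\in[0,r)$ with $u(j)\not\subseteq u(j^*)$, or to first apply Lemma~\ref{thm:refine_partitions} to split $g_{j^*}$ by color so that $j^*_{\mathrm{new}}$ corresponds precisely to the $d^*$-sub-component carrying $g'$, while every other new index projects to some $j$ with $u(j)\subseteq u(j^*)$. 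Managing this coherence, while simultaneously maintaining the required $\Pi_1$ and $\Pi_2$ properties of being a pre-condition and a condition, is where the bulk of the technical work lies.
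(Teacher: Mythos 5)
The proposal diverges from the paper's proof in a way that leaves a genuine gap: you collapse to a single branch, and then try to inherit property~(c.3.2) wholesale from the old condition via transitivity of $\preceq$. This does not work. Applying (c.3.2) once to a single permanent $\{g_j\}_{j<r}$ and a single subdivision (the one induced by $c_\infty$) produces \emph{one} witness $j^*,d^*,g'$, valid for \emph{that} partition only. There is no reason for deeper partitions, or for the partitions arising inside a later pre-condition $p''\preceq p'$, to also be witnessed through $j^*$. Concretely, when you invoke (c.3.2) for the old condition to verify it for the new one, the old condition only promises a witness $j$ with $(\rho\circ\rho'')(j)\in U$; the new condition needs $\rho''(j)\in U'=\{j^*_{\mathrm{new}}\}$, which is strictly stronger since $U'$ is a singleton strictly contained in $\rho^{-1}(U)$. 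Transitivity of $\preceq$ alone does not close this gap.

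This is precisely why the paper's construction keeps \emph{multiple} branches. The paper chooses, by $\Sigma_2$ maximization, a maximal $L\subseteq[0,r)$ of indices whose component has a $W_i^x$-witness for \emph{every} partition; it then applies Lemma~\ref{thm:refine_partitions} repeatedly for each $j\notin L$ to refine $S$ so that no $S'$-consistent extension of $u(j)$ can ever land in $W_i^x$. With this in place, $F'(k+1)$ contains a new branch for every $j<r$ (not just one), $F'_\dagger(k+1)$ is taken to be those branches that are in $W_i^x$ and extend a $U$-branch, and $U'$ is the corresponding (possibly multi-element) index set. The verification of (c.3.2) then splits on $i^*\leq i$ versus $i<i^*$; in the latter case one uses $(i,x)\in V''$ together with the refinement property to argue that any witness $u''(j)\in W_i^x$ supplied by the old condition must already have had $u'(\rho'(j))\in W_i^x$, hence $\rho'(j)\in U'$. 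Your single-branch construction cannot run this argument because it has thrown away all the other $W_i^x$-candidates, and the information about which branch is ``the right one'' is not $\Sigma_2$-available at this stage --- this is the whole reason the condition carries a tree, with the shrinking of $U$ to a singleton deferred to the separate $|U|=1$ lemma. You also identify, correctly, that constant $u'$ creates trouble for the refinement-map coherence, but note further that if you resolve that by making $u'$ non-constant, the claimed vacuity of (c.2.3) for the new pair $(i,x)$ no longer holds, since (c.2.3) quantifies over all $j<r'$, not just $j\in U'$.
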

\begin{proof}
We wish to consider those sets $L\subseteq[0,r-1]$ such that there exists a $\{g_j\}_{j<r}\in S(m)$ such that:
  \begin{itemize}
  \item $\{g_j\}_{j<r}$ is permanent,
  \item for each $j\in L$ and every partition $g_j=\bigcup_{d'\leq d} g_j^{d'}$, there is a $d'$ and a non-empty $g'\subseteq g_j^{d'}$ so that $u(j)\cup g'\in W^x_i$ and $c$ is fallow on $u(j)\cup g'$.
  \end{itemize}
This is a $\Sigma_2$ property, so we may choose some such $\{g_j\}_{j<r}\in S(m)$ maximizing the size of $L$, and we may make this choice with $m$ sufficiently large.  
By restricting $S$, we may assume that whenever $\{h_j\}_{j<r}\in S(n)$, $\{g_j\}_{j<r}\subseteq\{h_j\}_{j<r}$.

For each $j$, write $f_j=u(j)$.  For $j\in L$, $c_{\infty}$ induces a partition $g_j=\bigcup_{d'\leq d}g_j^{d'}$, so we may choose some $d'_j\leq d$ and $g'_j\subseteq g_j^{d'_j}$ with $f_j\cup g'_j\in W^x_i$ and $c$ fallow on $f_j\cup g'_j$ by \ref{cond_cond_extension}.

We wish to obtain $S_1$ surjectively refining $S$ so that whenever $\{h_{j'}\}_{j'<r_1}\in S_1(n)$, for each $j'$ with $\rho(j')\not\in L$ and $h'\subseteq h_{j'}$, if $c$ is fallow on $f_j\cup h'$ then $f_j\cup h'\not\in W_i^x$.  

We do this by repeated application of Lemma \ref{thm:refine_partitions}, once for each $j\not\in L$: for any $j\not\in L$, let $S^*_j(n)$ consist of those partitions $\{h_{j^*}\}_{j^*<d+1}$ such that for every $j^*> 0$, there is no $h'\subseteq h_{j^*}$ such that $f_j\cup h'\in W_i^x$ and $c$ is fallow on $f_j\cup h'$.  Since $j\not\in L$, $S^*$ is infinite: for any $m'$, choose a permanent $\{g'_j\}_{j<r}\in S(m')$ with $\{g_j\}_{j<r}\subseteq\{g'_j\}_{j<r}$.  Then for each $j<r$ with $j\not\in L$, $g'_j$ must have a partition $g'_j=\bigcup_{j^*\leq d}g'_{j^*}$, and therefore the partition $h'_0=\bigcup_{j'\neq j}g'_{j'}$ and $h'_{j^*+1}=g'_{j^*}$ belongs to $S^*_j(n)$.

Then one application of Lemma \ref{thm:refine_partitions} gives us the property we need for $j'$ with $\rho(j')=j$; repeating this for each $j\not\in L$ gives the desired $S_1$.  (The defining property of $S_1$ is $\Pi_1$, so we can carry out this iteration.)


Choose some permanent $\{h_{j'}\}_{j'<r'}\in S_1(m')$ with $m'$ sufficiently large that, for all $j\not\in L$ and every $j'\in\rho^{-1}(j)$, $|h_{j'}|\geq 1$.  We can now define $F'$ to extend $F$ by one additional level containing:
\begin{itemize}

\item for each $j\in L$, the branch $f_j\cup g'_j$,
\item for each $j\not\in L$, the branches $f_j\cup \{\min h_{j'}\}$ for every $j'\in\rho^{-1}(j)$.
\end{itemize}
We can further refine $S_1$ to $S'$ of the same size by restricting to extensions of $\{h_{j'}\}_{j'<r'}$ and truncating so that $\min S'$ is large enough that every element of $F'$ has achieved its limit color; by abuse of notation, we say $S'$ refines $S$ by $\rho$ as well.  We can then define $u'$ by:
\begin{itemize}
\item if $\rho(j')\not\in L$ then $u'(j')=f_{\rho(j')}\cup\{\min h_{j'}\}$,
\item if $\rho(j')\in L$ then $u'(j')=f_{\rho(j')}\cup g_{\rho(j')}$.
\end{itemize}
We take $c'_*(i,\cdot)$ to be the restriction of $c_{i,\infty}$ to elements of $F'$.

Finally, we let $F'_\dagger$ extend $F_\dagger$ by setting $F'_\dagger(k+1)$ to consist of those branches $u'(j')$ of $F'(k+1)$ which are in $W_i^x$ and such that $\rho(j')\in U$.  We take $U'$ to be exactly those $j'$ such that $u'(j')$ is a branch of $F'_\dagger(k+1)$.

We claim that $(F',F'_\dagger,c'_*,S',u',U',W,V\cup\{(i,x)\})$ is a condition.  Most properties are clear from the discussion above.  The fact that $F'_\dagger$ has a branch---that is, that $L\cap U\neq\emptyset$---follows immediately from the fact that $(F,F_\dagger,c_*,S,u,U,W,V)$ was a condition.

We must verify the extensions property of conditions, \ref{cond_cond_extension}.  Let $i^*\leq W$ and $x^*$ be given, and suppose $(F'',S'',u'',V'')\preceq_{\rho'}(F',F'_\dagger,c'_*,S',u',U',W\upharpoonright i^*,(V\cup\{(i,x)\})\upharpoonright i^*)$.  If $i^*\leq i$ then we also have $(F'',S'',u'',V'')\preceq (F,F_\dagger,c_*,S, u,U,W,V)$ and the necessary property follows because $(F,F_\dagger,c_*,S, u,U,W,V)$ is a condition.

So we assume $i<i^*$.  Because $(F,F_\dagger,c_*,S, u,U,W,V)$ is a condition, there is a level $n$ so that for every $\{g_j\}_{j<r''}\in S''(n)$ and every partition $g_j=\bigcup_{d'\leq d}g_j^{d'}$, there is a $j<r'$ so that $\rho(\rho'(j))\in U$ and $u''(j)$ is contained in $\bigcap_{(i',x')\in V''} W_{i'}^{x'}$, and has the other necessary properties.  By the construction of $F'$ and $S'$, since $u''(j)$ is contained in $W_i^x$, so is $u'(\rho'(j))$ (that is, the restriction of $u''(j)$ to a branch through $F'$), and therefore $\rho'(j)\in U'$ as needed.
\end{proof}

In particular, using $\Pi_2$ induction, for any $V'\supseteq V$ with $i\leq w$ for all $(i,x)\in V'$, we can find a pre-condition $(F',c'_*,S',u',U',W,V')\preceq(F,c_*,S,u,U,W,V)$.  There are two additional properties we need for this to be a condition; the second is already guaranteed if $(F,c_*,S,u,U,W,V)$ is a condition (because we are not extending $W$), and there is always an extensive $S''$ refining $S'$, so we can find a condition $(F',c'_*,S',u'',U'',W,V')\preceq(F,c_*,S,u,U,W,V)$.

Next we show that we can, unboundedly often, arrange to have $|U|=1$.  The crucial idea is that we can actually tell, at intermediate stages of our construction, which branches are necessary.  We do not see how to do this uniformly enough to avoid intermediate stages which allow $|U|>1$ (at least, not without substantial bookkeeping complications), but we do not need to wait for the construction to finish to identify these branches.

\begin{lemma}
  Let $(F,F_\dagger,c_*,S,u,U,W,V)$ be a condition and let $\phi(y,z,G)$ be a quantifier-free formula.  Then there is a condition $(F',F'_\dagger,c'_*,S',u',U',W',V',)\preceq(F,F_\dagger,c_*,S,u,U,W,V)$ such that $|U'|=1$.
\end{lemma}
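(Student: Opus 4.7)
The plan is to extend $W$ to accommodate $\phi$ as a new requirement and then to isolate a single surviving branch in $U$ via a $\Sigma_2$-maximality argument enabled by $I\Sigma^0_2$.

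First, set $W^*_{w+1,x} := \{g : \exists z \le |g|\ \phi(x,z,g)\}$ (uniformly computable) and define $W_{w+1}^x$ accordingly; let $W' := W \cdot W_{w+1}$. Since $V$ is unchanged, the pre-pre-condition and pre-condition properties transfer directly. Property (c.3.2) for indices $i \le w$ is inherited from the original condition; for the new index $i = w+1$ I use Lemma~\ref{thm:partition_extensive} and Lemma~\ref{thm:refine_partitions} to show that any refinement of $S$ can be augmented with a $\phi$-witness inside a chosen surviving component while preserving extensiveness and surjective refinement.

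To reduce $|U|$ to one, for each $T \subseteq U$ consider the $\Sigma_2$ statement $\psi(T)$ asserting that there exists a refinement $(F'',F''_\dagger,c''_*,S'',u'',U'',W',V'') \preceq_{\rho''} (F,\ldots,U,W',V)$ with $\rho''(U'') \cap T = \emptyset$. By $I\Sigma^0_2$, pick a $\subseteq$-maximal $T$ with $\psi(T)$ and fix the witnessing refinement. Property (c.3.2) of the original condition forces some surviving $j$ in any refinement to have $\rho(j) \in U$, so $\rho''(U'') \subseteq U \setminus T$ is nonempty. If $|\rho''(U'')| \geq 2$, iterate the maximality argument on the refined condition to further shrink the image; each iteration strictly shrinks $|\rho''(U'')|$ by maximality, so the procedure terminates. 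Once $|\rho''(U'')| = 1$, a final thinning of the resulting $S''$ via Lemma~\ref{thm:refine_partitions} collapses $U''$ to a singleton $U'$.

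The main obstacle is verifying (c.3.2) for the resulting single-branch condition. The maximality of $T$ delivers exactly this: if some further refinement eliminated the unique surviving branch $j^* \in U \setminus T$, then $T \cup \{j^*\}$ would itself witness $\psi$, contradicting maximality. Hence every further refinement has some $j$ with $\rho(j) = j^*$, and the original (c.3.2) supplies the required extension within $u'(j) \cup g'$ satisfying the relevant $W_i^x$ and fallowness. The remaining technical point---ensuring refinements remain extensive and compose surjectively across the iteration, so that the composite $\rho$ is well-defined and the composite condition is actually $\preceq$ the starting one---is handled by repeated applications of Lemma~\ref{thm:refine_partitions}, whose output is both extensive and a surjective refinement of its input, together with the observation that composition of surjective refinements is a surjective refinement.
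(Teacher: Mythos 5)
Your proposal diverges from the paper's argument in two significant ways, and both introduce gaps.

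First, you begin by appending a new requirement $W_{w+1}$ built from $\phi$. The lemma's hypothesis mentions $\phi$ but the paper's proof never uses it (it appears to be a vestigial parameter), and the lemma's task is only to shrink $U$ to a singleton, not to add a requirement. More importantly, appending $W_{w+1}$ to $W$ requires verifying condition~\ref{cond_cond_extension} for the new index $i=w+1$, and that verification is exactly the $(\ast)$-hypothesis of Lemma~\ref{thm:choice_of_generic_pos}. That hypothesis may simply fail for the given $\phi$, and it cannot be manufactured from Lemma~\ref{thm:partition_extensive} or Lemma~\ref{thm:refine_partitions}, which only produce refinements of families of partitions; they say nothing about a fresh $W_{w+1}^x$. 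So the very first step of your proof already needs an assumption you don't have.

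Second, the maximality engine you use is not available at the right complexity. You assert that $\psi(T)$, ``there exists a refinement $\preceq_{\rho''}$ the given condition with $\rho''(U'')\cap T=\emptyset$,'' is $\Sigma_2$. If the refinement is required to be a \emph{condition}, then because being a condition is $\Pi_2$, the existential statement $\psi(T)$ is $\Sigma_3$, and a $\Sigma_3$-maximal $T$ is out of reach from $I\Sigma^0_2$. If instead you mean only a \emph{pre-condition} refinement (which would make $\psi(T)$ genuinely $\Sigma_2$), then the maximal $T$ you extract carries no guarantee of \ref{cond_cond_extension}, and the final object you produce is only a pre-condition, which does not prove the lemma. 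This complexity dichotomy is precisely why the paper avoids quantifying over refinements: it first passes to an arbitrary proper extension $(\hat F,\ldots)$ so it has a free top level to modify, and then minimizes over subsets $U_0\subseteq\hat U$ the \emph{finitely-indexed} $\Pi_2$ predicate ``$(\hat F,\hat F_\dagger^{U_0},\ldots,U_0,W,V)$ is a condition.'' That is a bounded $\Pi_2$-minimization, which $I\Sigma^0_2$ supports.

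Finally, your argument that maximality of $T$ yields \ref{cond_cond_extension} for the surviving singleton conflates two different things. The failure of \ref{cond_cond_extension} is not ``some refinement eliminates $j^*$''; it is that some pre-condition refinement and pair $(i,x)$ admit, at every level, a bad partition in which no $j$ with $\rho(j)=j^*$ has a fallow extension landing in $W_i^x$. This failure can occur even when branches over $j^*$ persist. The paper handles this by a two-sided contradiction: from the non-condition for $U_0\setminus\{j_0\}$ it extracts a requirement $(i,x)$ that pushes extensions \emph{onto} $j_0$, from the putative non-condition for $\{j_0\}$ it extracts $(i',x')$ that pushes extensions \emph{off} $j_0$, thins the partition families via Lemma~\ref{thm:refine_partitions} so that each side genuinely excludes the relevant $W$-membership, and then invokes the extension lemma to produce a condition with $(i,x),(i',x')$ both in $V$, giving a branch that must be both in $W_i^x$ and in $W_{i'}^{x'}$ — impossible by construction. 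Your sketch skips this entire mechanism. As written, the proof does not go through.
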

The argument is rather technical because conditions have many pieces, which makes them difficult to adjust, but the underlying argument is not so complicated.  We shrink $U$ to make it as small as possible while remaining a condition.  If the result is that $|U|=1$, we are finished.  Otherwise, we pick a $j_0\in U$ and remove it; since $U\setminus\{j_0\}$ is not a condition, there must be some extension and an $i,x$ witnessing the failure to be a condition.  This should mean that $j_0$ is necessary---that there are extensions and a choice of $(i,x)$ which could force us into the branch represented by $j_0$.  But this suggests that we should get a condition when we restrict to just $U=\{j_0\}$---otherwise there should also be an extension and a choice of $(i',x')$ which forces us off the branch represented by $j_0$.  But, if we arrange things appropriately, this will give us a contradiction when we use the previous lemma to try to find extensions in $W_i^x\cap W_{i'}^{x'}$.

\begin{proof}
We first take an arbitrary proper extension $(\hat F,\hat F_\dagger,\hat c_*,\hat S,\hat u,\hat U,W,V)\preceq (F,F_\dagger,c_*,S,u,U,W,V)$; specifically, we need $k=|\dom(\hat F)|>|\dom(F)|$, because we will need to modify $\hat F(k)$ in the course of finding our condition.

For any $U_0\subseteq\hat U$, let $\hat F_\dagger^{U_0}$ be the tree with $\hat F_\dagger^{U_0}(n)=\hat F_\dagger(n)$ for $n<k$ and $\hat F_\dagger^{U_0}(k)=\{\hat u(j)\mid j\in U_0\}$.  We may choose $U_0\subseteq \hat U$ minimal so that $(\hat F,\hat F^{U_0}_\dagger,\hat c_*,\hat S,\hat u,U_0,W,V)$ is a condition.  If $|U_0|=1$, we are finished, so suppose not.

Choose any $j_0\in U_0$, so $(\hat F,\hat F^{U_0\setminus\{j_0\}}_\dagger,\hat c_*,\hat S,\hat u,U_0\setminus\{j_0\},W,V)$ is not a condition.  Therefore there must be some pre-condition $(F',S',u',V')\preceq_\rho (\hat F,\hat F^{U_0\setminus\{j_0\}}_\dagger,\hat c_*,\hat S,\hat u,U_0\setminus\{j_0\},W\upharpoonright i,V\upharpoonright i)$ and some $x$ so that, for every $n$, there is a $\{g_j\}_{j<r'}\in S'(n)$ so that, for every $j<r'$ such that $\rho(j)\in U_0\setminus\{j_0\}$ and $u'(j)\in\bigcap_{(i',x')\in V'}W_{i'}^{x'}$, there is a partition witnessing the failure to be a condition.  Furthermore, because $p'$ is a pre-condition, if $u'(j)\not\in W_{i'}^{x'}$ then no extension consistent with $S'$ will be in $W_{i'}^{x'}$.

We define two modifications as follows.  We define $\hat F'$ by $\hat F'(n)=\hat F(n)$ for $n<k$ and taking $\hat F'(k)$ to consist of the branches of $\hat F'$ of the form $u'(\rho(j))$ for $j<\hat r'$ such that $\rho(j)\in U_0\setminus\{j_0\}$, together with all branches of $\hat F$ not extended by such a branch.  We take $\hat c'_*$ to be the restriction of $c_\infty$ to $\hat F'$.

We define $\hat S'$ as follows: for $j\not\in U_0$, the $j$-th component of $\hat S'$ is the $j$-th component of $S$.  For $j\in U_0$, we replace the $j$-th component of $S$ with the corresponding components of $S'$, each of which is then partitioned, as in Lemma \ref{thm:refine_partitions}, into $d$ components, some of which may then be eliminated to make the resulting $\hat S'$ extensive.  $\hat u'$ is the natural composition with $u$.

Take $\hat F'_\dagger$ so that the branches of $\hat F'_\dagger$ are exactly the branches of $\hat F'$ which extend branches of $\hat F_\dagger$, and take $\hat U^+$ to consist of all $j$ which refine $j'\in U_0$.  Then $(\hat F',\hat F'_\dagger,\hat c'_*,\hat S',\hat u',\hat U^+,W,V)$ is a condition---the only thing to check is \ref{cond_cond_extension}, and this follows because if $(F'',S'',u'',V'')\preceq (\hat F',\hat F'_\dagger,\hat c'_*,\hat S',\hat u',\hat U^+,W,V)$ then there is a modification $\tilde F''$ with the same branches so that $(\tilde F'',S'',u'',V'')\preceq (\hat F,\hat F_\dagger,\hat c_*,\hat S,\hat u,U_0,W,V)$, and since $(\hat F,\hat F_\dagger,\hat c_*,\hat S,\hat u,U_0,W,V)$ is a condition, so we can find a suitable extension of a branch of $\tilde F''$, and since only the branches of $\tilde F''$ matter, also a branch of $F''$ has a suitable extension.

Now take $\hat U'=\{j_0\}$.  We claim that $(\hat F',\hat F'_\dagger,\hat c'_*,\hat S',\hat u',\hat U',W,V)$ is also a condition.  Suppose not; then there is a pre-condition $(F'',S'',u'',V'')\preceq_{\rho''} (\hat F',\hat F'_\dagger,\hat c'_*,\hat S',\hat u',\hat U',W\upharpoonright i',V\upharpoonright i')$ and an $x'$ witnessing this failure.

We modify this to become a condition.  By refining $S''$ to be extensive, applying Lemma \ref{thm:refine_partitions} to each $j\in U''$, we obtain $\hat S''$ so that no extension of a $u''(j)$ has a suitable extension belonging to $W_{i'}^{x'}$.  We take $\hat u''$ to be the natural composition with $u''$.  We take $\hat F''_\dagger$ to consist of all branches (with initial segments at suitable levels) belonging to $\bigcap_{(i'',x'')\in V''}W_{i''}^{x''}$ which extend a branch of $\hat F_\dagger^+$, and take $\hat U''$ to consist of all $j$ whose image is in $\hat U^+$.

We claim that $(F'',\hat F''_\dagger,c''_*,\hat S'',\hat u'',\hat U'',W,V'')$ is a condition.  It is easily seen to be a pre-pre-condition by the definition of the components.  Since $(F'',S'',u'',V'')$ was a pre-condition, so is $(F'',\hat F''_\dagger,c''_*,\hat S'',\hat u'',\hat U'',W,V'')$.  We have ensured $\hat S''$ is extensive.  Finally, since $(F'',\hat F''_\dagger,c''_*,\hat S'',\hat u'',\hat U'',W,V'')\preceq (\hat F',\hat F^+_\dagger,\hat c'_*,\hat S',\hat u',\hat U^+,W,V)$ and the latter is a condition, so the final property of being a condition holds for $(F'',\hat F''_\dagger,c''_*,\hat S'',\hat u'',\hat U'',W,V'')$.

But by the previous lemma, we must have some condition $(F^*,F^*_\dagger,c^*_*,S^*,u^*,U^*,W,V''\cup V'\cup\{(i,x),(i',x')\})\preceq_{\rho^*} (F'',\hat F''_\dagger,c''_*,\hat S'',\hat u'',\hat U'',W,V'')$.  There must be some $j\in U^*$ so that $u^*(j)\in W_i^x\cap W_{i'}^{x'}\cap\bigcap_{(i'',x'')\in V''}W_{i''}^{x''}$.  Consider the image of $j$ in $\hat U^+$.  We cannot have the image be $j_0$, because we refined so that no extension could belong to $W_{i'}^{x'}$.  So the image of $j_0$ must be in $\hat U^+\setminus\{j_0\}$.  Since the branch belongs to $\bigcup_{(i'',x'')\in V''}W_{i''}^{x''}$, it must be a branch corresponding to a component which we refined so that no extension could belong to $W_i^x$.  This is a contradiction, so we conclude that $(\hat F',\hat F'_\dagger,\hat c'_*,\hat S',\hat u',\hat U',W\upharpoonright i',V\upharpoonright i')$ must have been a condition.
\end{proof}

\subsection{Branch Generics}

\begin{definition}
A \emph{branch generic} is above $(F,F_\dagger,c_*,S, u,U,W,V)$ is the unique unbounded path of $F_\dagger$ from a generic sequence above $(F,F_\dagger,c_*,S, u,U,W,V)$ 

 We say $(F,F_\dagger,c_*,S, u,U,W,V)\Vdash\phi(G)$ if whenever $\Lambda$ is a branch generic above $(F,c_*,S, u,U,W,V)$, $\phi(\Lambda)$ holds.
\end{definition}

We need to show that, given $(F,F_\dagger,c_*,S, u,U,W,V)$ and a $\Pi_2$ setence $\forall x\exists y\phi(x,y,G)$, we can extend $(F,F_\dagger,c_*,S, u,U,W,V)$ to force either $\forall x\exists y\phi(x,y,G)$ or its negation, and, crucially, whether we force $\forall x\exists y\phi(x,y,G)$ is determined by a $\Pi_2$ property of $(F,F_\dagger,c_*,S, u,U,W,V)$.

The key property is:
\begin{quote}
  $(\ast)$ For every pre-condition $(F',F'_\dagger,c'_*,S',u',U',W,V')\preceq_\rho (F,S, u,n_0,\{f_j\}_{j\in R},W,V)$ and every $x$ there is an $n$ and a $\{g_j\}_{j<r}\in S(n)$ so that for every partition $g_j=\bigcup_{d'\leq d} g_j^{d'}$ there is a $j$ with $\rho(j)\in U$ so that $u'(j)\in\bigcap_{(i',x')\in V'}W_{i'}^{x'}$, a $d'\leq d$, and a $g'\subseteq g_j^{d'}$ so that $\exists y\phi(x,y,u'(j)\cup g')$ and $c$ is fallow on $u'(j)\cup g'$.
\end{quote}
Note that $(\ast)$ is a $\Pi_2$ property (depending on $(F,S, u,n_0,\{f_j\}_{j\in R},W,V)$ and $\phi(x,y,G)$).

\begin{lemma}\label{thm:choice_of_generic_pos}
  Let $(F,F_\dagger,c_*, S, u,U,W,V)$ be a condition and let $\phi(x,y,G)$ be a quantifier-free formula (with no other free variables, but possibly with numeric and set parameters).  Suppose that $(\ast)$ holds.


Then there is a $W^*$ so that $(F,F_\dagger,c_*, S, u,U,W^\frown\langle W^*\rangle,V)$ is a condition and $(F,F_\dagger,c_*, S, u,U,W^\frown\langle W^*\rangle,V)\Vdash\forall y\exists z\phi(y,z,G)$.
\end{lemma}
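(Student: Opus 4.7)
The plan is to define the new component $W^*$ so that membership in $W^*_x$ records, on initial segments, witnesses to $\exists z\,\phi(x',z,G)$ for every $x'\le x$.  Concretely, take the uniformly computable core $W^*_{w+1,x'}:=\{g\mid \exists z\le|g|\ \phi(x',z,g)\}$, so that
\[W^*_x=\{f\mid \forall x'\le x\ \exists g\sqsubseteq f,\ g\in W^*_{w+1,x'}\}\]
has the required form.  After extending $W$ to $W^\frown\langle W^*\rangle$, the pre-pre-condition clauses are immediate; the pre-condition clause c.2.3 for the new index is vacuous because $V$ contains no pair with first coordinate $w+1$; and both (c.3.1) and (c.3.2) for old indices $i\le w$ are unchanged because they refer only to $W\upharpoonright i$.

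The substantive task is to verify (c.3.2) for the new index and every $x$: given any pre-condition $p'\preceq_\rho p$ whose $W$-part is the unchanged $W$, there should be a level $n$ such that for every $\{g_j\}_{j<r'}\in S'(n)$ and every partition $g_j=\bigcup_{d'\le d}g_j^{d'}$, some branch $j$ with $\rho(j)\in U$ and $u'(j)\in\bigcap_{(i',x')\in V'}W_{i'}^{x'}$ admits $g'\subseteq g_j^{d'}$ with $u'(j)\cup g'\in W^*_x$ and $c$ fallow on $u'(j)\cup g'$.  I would prove this by $\Pi_2$ induction on $x$.  For $x=0$ this reduces to a single application of $(\ast)$ at $x=0$: the witness $z$ delivered by $(\ast)$ is bounded, so by passing to a sufficiently large level and enlarging $g'$ inside its piece (using Lemma~\ref{thm:refine_partitions} to preserve the pre-condition's structure), we can guarantee $u'(j)\cup g'\in W^*_{w+1,0}$, hence $u'(j)\cup g'\in W^*_0$.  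For the inductive step from $x$ to $x+1$ a single application of $(\ast)$ supplies only the top witness.  My plan is to combine the inductive hypothesis with $(\ast)$: use the hypothesis to pass from $p'$ to a surjective refinement $p''\preceq_{\rho'}p'$ in which every live branch of $F$ has already been extended so as to carry, as a proper initial segment, witnesses for $W^*_x$; then applying $(\ast)$ at $x+1$ to $p''$ yields a further extension $g''$ such that $u''(j)\cup g''$ inherits those initial segments and contains a new one witnessing $W^*_{w+1,x+1}$, placing it in $W^*_{x+1}$.

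The principal obstacle is executing this staged refinement while staying inside a pre-condition.  At each stage we must verify that the intermediate tuple is still a pre-condition---especially c.2.3 for the unchanged $W$---and that the partition family remains infinite and extensive.  These are handled by invoking Lemma~\ref{thm:refine_partitions} once per component that must be adjusted (its surjectivity guarantees that $U$-membership lifts correctly), and then restoring extensiveness as in Lemma~\ref{thm:partition_extensive}.  Consolidating the finitely many levels produced for different sub-partitions into a single level $n$ valid for every $\{g_j\}\in S'(n)$ uses that $S'(n)$ is finite together with $B\Sigma^0_2$, both available in $I\Sigma^0_2$.

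With (c.3.2) established, $(F,F_\dagger,c_*,S,u,U,W^\frown\langle W^*\rangle,V)$ is a condition.  To see that it forces $\forall y\exists z\,\phi(y,z,G)$, fix a generic sequence above it and let $\Lambda$ be the associated branch generic.  Since $\bigcup V_k=\mathbb{M}\times\mathbb{M}$, for each $y$ there is a stage $k$ at which $(w+1,y)\in V_k$; from that stage onward every branch of $F_\dagger^k$ lies in $W^*_y$, so $\Lambda$ has an initial segment $g\in W^*_{w+1,y}$, producing $z\le|g|$ with $\phi(y,z,g)$.  Because $\phi$ is quantifier-free this witness lifts to $\phi(y,z,\Lambda)$, completing the verification.
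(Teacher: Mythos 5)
The structure of your argument matches the paper's (the published proof is a one-liner, ``Take $h\in W_*^x$ if there is an $h'\subseteq h$ so $\exists y\leq|h|\phi(x,y,h')$\dots{} essentially by the definition,'' together with the same observation that the branch generic picks up an initial-segment witness for each $y$ once $(w+1,y)$ enters the $V$'s). Your definition of $W^*$ is in fact cleaner: you explicitly write out the cumulative $W^*_x=\{f\mid\forall x'\le x\,\exists g\sqsubseteq f,\ g\in W^*_{w+1,x'}\}$ required by the form constraint in the definition of pre-pre-conditions, whereas the paper only gives the single-level core. Your forcing paragraph is correct and is essentially what the paper says.

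Where I have a concrete concern is the inductive step of your $\Pi_2$ induction for c.3.2. You claim you can ``use the hypothesis to pass from $p'$ to a surjective refinement $p''\preceq_{\rho'}p'$ in which \emph{every} live branch of $F$ has already been extended so as to carry, as a proper initial segment, witnesses for $W^*_x$.'' The inductive hypothesis gives something strictly weaker than this: it is a density statement, asserting only that at some level $n$, for each partition, \emph{some} branch $j$ admits an extension $u'(j)\cup g'\in W^x_{w+1}$. It does not guarantee that all live branches extend, nor does it deliver a refined pre-condition $p''$; turning a density statement into an actual extension is precisely the content of the earlier ``extend the condition'' lemma, which is a separate (and more delicate) argument that also changes $V$, which you cannot afford here because $V'$ must stay bounded to indices $\le w$. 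Moreover, even if one could extend some branches, a different $j$ may be chosen by $(\ast)$ at stage $x+1$ than the one whose extension carries the stage-$x$ witness, so ``inheriting those initial segments'' is not automatic. So while you correctly flag the real subtlety---that $(\ast)$ supplies a witness only for the top index $x$, while $W^x_{w+1}$ is cumulative over $x'\le x$---the mechanism you propose for handling it does not work as written. A correct argument has to manage the interaction between the successive applications of $(\ast)$ and the tree/partition structure more carefully (one natural route is to iterate $(\ast)$ along the unique partition induced by $c_\infty$, extending one branch at a time in the style of the ``$|U'|=1$'' and ``extend'' lemmas, then restoring a pre-condition with Lemma \ref{thm:refine_partitions} and Lemma \ref{thm:partition_extensive} at each step), and this is what is being compressed into the paper's ``essentially by the definition.''
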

\begin{proof}
  Take $h\in W_*^x$ if there is an $h'\subseteq h$ so $\exists y\leq |h|\phi(x,y,h')$.  Then $(F,F_\dagger,c_*, S, u,U,W^\frown\langle W^*\rangle,V)$ is a condition, essentially by the definition.

  If $\Lambda$ is a branch generic, in particular, for every $x$ there is some initial segment of $\Lambda$ belonging to $W^*_x$, and therefore there is some $y$ so that $\phi(x,y,\Lambda)$, as needed.
\end{proof}

\begin{lemma}\label{thm:choice_of_generic_neg}
   Let $(F,F_\dagger,c_*, S, u,U,W,V)$ be a condition and let $\phi(x,y,G)$ be a quantifier-free formula.  Suppose that $(\ast)$ fails.

Then there is a condition $(F',F''_\dagger,c'_*,S'', u'',U'',W,V')\preceq (F,F_\dagger,c_*, S, u,U,W,V)$ such that $(F',F''_\dagger,c'_*,S'', u'',U'',W,V')\Vdash\exists x\forall y\neg\phi(x,y,G)$.
\end{lemma}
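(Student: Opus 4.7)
The strategy is to fix the witness $x_0$ provided by the failure of $(\ast)$ and refine the partition family using ``bad partitions'' to block all extensions that could witness $\phi(x_0,y,\cdot)$. The failure of $(\ast)$ gives a pre-condition $q = (\bar F, \bar F_\dagger, \bar c_*, \bar S, \bar u, \bar U, W, \bar V) \preceq_{\bar\rho} (F, F_\dagger, c_*, S, u, U, W, V)$ and $x_0$ such that for every $n$ and every $\{g_j\}_{j < \bar r} \in \bar S(n)$ there is a partition $g_j = \bigcup_{d' \leq d} g_j^{d'}$ which is \emph{bad}: for every $j$ with $\bar\rho(j) \in U$ and $\bar u(j) \in \bigcap_{(i',x') \in \bar V} W_{i'}^{x'}$, every $d'$, and every $g' \subseteq g_j^{d'}$ with $c$ fallow on $\bar u(j) \cup g'$, $\phi(x_0, y, \bar u(j) \cup g')$ fails for every $y$. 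The key observation is that restricting a bad partition to a smaller interval yields a bad partition, since any potential witness in the restriction is also a subset of the original piece.

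Without loss of generality, I first refine $q$ to make $\bar S$ extensive via Lemma \ref{thm:partition_extensive}; this preserves the failure of $(\ast)$ because $\bar u$ is not extended, so valid indices and bad partitions transfer directly. I then define $S^\dagger$ to be the family of partitions of size $\bar r\cdot(d+1)$ whose elements are bad partitions of elements of $\bar S$; by the failure and the restriction observation, this is an infinite valid family. Applying Lemma \ref{thm:partition_extensive} once more yields an extensive $S''$ refining $S^\dagger$. Setting $F' = \bar F$, $F''_\dagger = \bar F_\dagger$, $c'_* = \bar c_*$, $V' = \bar V$, and transferring $u$ and $U$ to $u''$ and $U''$ via the composite refinement $\rho'': S'' \to \bar S$ in the natural way gives the proposed condition $p'' = (F', F''_\dagger, c'_*, S'', u'', U'', W, V')$. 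The main technical obstacle is ensuring $U'' \neq \emptyset$: were every $(j,d')$ with $j \in \bar U$ in the frozen set of $S^\dagger$ produced by Lemma \ref{thm:partition_extensive}, the tuple $\{\bigcup_{d'} f^*_{j,d'}\}_{j \in \bar U}$ would be frozen in $\bar S$, contradicting extensiveness; so at least one lift of each valid $j$ survives. The remaining axioms transfer routinely: the pre-pre-condition and pre-condition properties inherit from $q$ (bad pieces preserve fallow), $S''$ is extensive by construction, and the density property inherits from $p$ since any pre-condition extending $p''$ also extends $p$ and $W$ is unchanged.

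Finally, $p'' \Vdash \forall y \neg \phi(x_0, y, G)$, which yields $\exists x \forall y \neg \phi(x, y, G)$ with witness $x_0$. Any branch generic $\Lambda$ above $p''$ is the unique unbounded path of $\bigcup_i F_\dagger^{(i)}$ from a generic sequence $p'' = p_0 \succeq p_1 \succeq \cdots$; tracking the thread back through the refinement chain, its base component in $S''$ lifts to a specific $(j^{(q)}, d')$ in $S^\dagger$ with $j^{(q)} \in \bar U$. Hence every finite initial segment of $\Lambda$ has the form $\lambda_i = \bar u(j^{(q)}) \cup g^{(i)}$ with $g^{(i)} \subseteq h^{(n_i)}_{j^{(q)}, d'}$, the $(j^{(q)}, d')$-piece of the bad partition chosen at level $n_i$ along the thread. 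Since $\phi(x_0, y, \cdot)$ is quantifier-free its truth depends only on a bounded portion of its set argument, so for any $y$, $\phi(x_0, y, \Lambda) = \phi(x_0, y, \lambda_i)$ once $i$ is large enough; the bad property at level $n_i$ then forces $\neg \phi(x_0, y, \lambda_i)$ and hence $\neg \phi(x_0, y, \Lambda)$.
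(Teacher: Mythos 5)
Your proof follows the paper's overall strategy---extract a pre-condition $q=(\bar F,\bar F_\dagger,\bar c_*,\bar S,\bar u,\bar U,W,\bar V)$ and a witness $x_0$ from the failure of $(\ast)$, fold the ``bad partitions'' into the Matthias restraint, refine to an extensive family, and assemble the condition---but the refinement step is a genuine variant. The paper refines $\bar S$ one live index at a time via iterated applications of Lemma \ref{thm:refine_partitions}: for each $j$ with $\bar u(j)$ a live branch it builds the auxiliary $S^*_j$ (the family of bad splittings of the $j$-th component) and applies the lemma, whose built-in surjectivity guarantee ensures that every original component, and hence some live component, survives to the final $S''$. You instead form the single large family $S^\dagger$ of size $\bar r\cdot(d+1)$ consisting of all bad partitions of elements of $\bar S$ simultaneously and refine it once with Lemma \ref{thm:partition_extensive}. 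That lemma does not give surjectivity, so you are right that the nontrivial point shifts to showing $U''\neq\emptyset$, which you handle by first making $\bar S$ extensive and then observing that if every lift $(j,d')$ with $j\in\bar U$ landed in the maximal frozen set $R$ of $S^\dagger$, projecting along $(j,d')\mapsto j$ would freeze $\{g_j\}_{j\in\bar U}$ in $\bar S$, contradicting its extensiveness. This trade---one bulk refinement plus a separate survival argument, versus the paper's iterated surjective refinements---is the main difference; the verification that the resulting $S^\dagger$ is a legitimate family of partitions (via your observation that restrictions of bad partitions are bad), the identification of $F''_\dagger$, $u''$, $U''$, and the forcing argument at the end all match the paper's in substance. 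Two minor imprecisions worth tightening: your phrasing ``at least one lift of each valid $j$ survives'' overstates what the contradiction gives (it only guarantees one surviving lift of some $j\in\bar U$, which is all you need), and taking $F''_\dagger=\bar F_\dagger$ rather than the maximal tree of $F'$-branches extending $F_\dagger$ and lying in $\bigcap_{(i',x')\in V'}W_{i'}^{x'}$ (as the paper does) makes the inheritance of the density axiom \ref{cond_cond_extension} slightly more delicate, since a smaller $F''_\dagger$ means a smaller $U''$ to land in; the maximal choice is the cleaner one for that verification.
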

\begin{proof}
Since $(\ast)$ fails, we may choose a precondition $(F',F'_\dagger,c'_*,S',u',U',W,V')\preceq_\rho (F,F_\dagger,c_*, S, u,U,W,V)$ and an $x$ so that for every $n$, there is a $\{g_j\}_{j<r}\in S(n)$ and a partition $g_j=\bigcup_{d'\leq d}g_j^{d'}$ so that for any $j$, any $d'\leq d$, and $g'\subseteq g_j^{d'}$ such that $\rho(j)\in U$, $u'(j)\in\bigcap_{(i',x')\in V'}W_{i'}^{x'}$, and $c$ is fallow on $u'(j)\cup g'$, there is no $y$ so that $\phi(x,y,u'(j)\cup g')$ holds.

Choose $F''_\dagger$ extending $F_\dagger$ and contained in $F'$ so that the branches of $F''_\dagger$ are exactly those extending branches of $F_\dagger$ and contained in $\bigcap_{(i',x')\in V'}W_{i'}^{x'}$.

We wish to obtain $S''$ surjectively refining $S'$ by $\rho'$ so that whenever $\{h_{j'}\}_{j'<r''}\in S''(n)$, if $u'(\rho'(j'))$ is a branch of $F''_\dagger$ then there is no $h'\subseteq h_{j'}$ and $y$ so that $c$ is fallow on $u'(\rho'(j))\cup h'$ and $\phi(x,y,u'(\rho'(j'))\cup h')$ holds.  

We do this by repeated application of Lemma \ref{thm:refine_partitions}, once for each $j$ with $u'(j)$ a branch of $F''_\dagger$ (so $\rho(j)\in U$ and $u'(j)\in\bigcap_{(i',x')\in V'}W_{i'}^{x'}$).  For such a $j$, let $S^*_j$ be the family so $\{h_{j^*}\}_{j^*<d+2}\in S^*_j(n)$ if there is a $\{g_j\}_{j<r}\in S(n)$ with $h_0=\bigcup_{j'\neq j}g_{j'}$ and so that for each $0<d'+1\leq d+1$ and $g'\subseteq h_{d'+1}$ with $c$ fallow on $u'(j)\cup g'$, there is no $y\leq n$ so that $\phi(x,y,u'(j)\cup g')$.  By assumption, $S^*_j$ is infinite, so Lemma \ref{thm:refine_partitions} gives a refinement with the property we want for $j'\in(\rho')^{-1}(j)$.  By iterating this, we obtain the desired $S''$.  We take $u''(j')=u'(\rho'(j'))$ and $U''$ to be all $j'$ with $u''(j')$ a branch of $F''_\dagger$.

Then $(F',F''_\dagger,c'_*,S'', u'',U'',W,V') \preceq (F,F_\dagger,c_*, S, u,U,W,V)$ and $(F',F''_\dagger,c'_*,S'', u'',U'',W,V')\Vdash\forall y\neg\phi(x,y,G)$, so $(F',F''_\dagger,c'_*,S'', u'',U'',W,V')\Vdash\forall y\neg\phi(x,y,G)\Vdash\exists x\forall y\neg\phi(x,y,G)$.
\end{proof}

\subsection{Putting it Together}

\begin{lemma}
  Let $(\mathbb{M},\mathcal{M})\vDash\mathsf{RCA}_0+I\Sigma^0_2$ with $|\mathbb{M}|=|\mathcal{M}|=\aleph_0$ and let $c$ be a stable coloring.  Then there is a countable $\mathcal{M}'\supseteq\mathcal{M}$ so that $(\mathbb{M},\mathcal{M}')\vDash\mathsf{RCA}_0+I\Sigma^0_2$ and there is an $S\in\mathcal{M}$ which is unbounded and so that $c\upharpoonright[S]^2$ is fallow.
\end{lemma}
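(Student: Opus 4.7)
The plan is to construct $S$ as the unique unbounded path of the tree arising from a sufficiently generic descending sequence of conditions, built inside $\mathcal{M}$, and then take $\mathcal{M}'$ to be the Turing-closure of $\mathcal{M}$ together with the code of this sequence. (I read the statement's ``$S \in \mathcal M$'' as a typo for $S \in \mathcal M'$.)

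First I would form a trivial initial condition $p_0$: take $F_0 = F_\dagger^0$ to be the single-level tree with branch $\emptyset$, let $S_0$ be the trivial family of partitions of size $1$ starting above a sufficiently large point, set $u_0(0) = \emptyset$, $U_0 = \{0\}$, and let $V_0$, $W_0$ be empty. Fix within $\mathcal{M}$ an enumeration of all pairs $(i,x)$ and all $\Pi^0_2$ formulas $\forall y \exists z\, \psi(x,y,z,G)$ with parameters in $\mathcal{M}$. I would then build within $\mathcal{M}$ a descending sequence $p_0 \succeq p_1 \succeq \cdots$ that is generic in the sense of Section~3.3 \emph{and} decides every such $\Pi^0_2$ formula, by alternately: (a) extending the domain of $F_n$ while adding the next pair $(i,x)$ to $V_n$ using the first extension lemma; (b) taking the next $\Pi^0_2$ formula, asking whether $(\ast)$ holds for the current $p_n$, and applying Lemma~\ref{thm:choice_of_generic_pos} or Lemma~\ref{thm:choice_of_generic_neg} accordingly; (c) at cofinal stages, applying the $|U|=1$ lemma. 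Each operation preserves conditionhood by the lemmas already proved, and the ``decision'' in (b) is a $\Pi^0_2$ question, so the whole construction can be carried out inside $\mathcal{M}$ using $I\Sigma^0_2$ reasoning.

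Let $G \in \mathcal{M}$ code the resulting sequence and set $\mathcal{M}' = \{T : T \leq_T G \oplus X \text{ for some } X \in \mathcal{M}\}$. Since $|U_n| = 1$ cofinally often, the tree $\bigcup_n F_\dagger^n$ has a unique unbounded path $S$, which is $G$-computable and hence in $\mathcal{M}'$. Clause c.1.3 of pre-pre-conditions immediately gives that $c \upharpoonright [S]^2$ is fallow, and the cofinal extension of $\dom(F_n)$ gives that $S$ is unbounded.

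The main obstacle is verifying $\mathcal{M}' \vDash I\Sigma^0_2$. For this, I would show that for every $\Sigma^0_2$ formula $\phi(x, T)$ with $T \in \mathcal{M}'$, the set $\{x \in \mathbb{M} : \phi(x, T)\}$ is $\Sigma^0_2$-definable in $\mathcal{M}$; induction in $\mathcal{M}'$ then follows from $I\Sigma^0_2$ in $\mathcal{M}$. Writing $T$ as $G$-computable, $\phi(x, T)$ is equivalent to a $\Sigma^0_2$ formula $\exists y\, \forall z\, \theta(x,y,z,G)$. By step (b), for every such formula with parameter $x$ some $p_n$ forces it one way or the other, and the forcing relation has the shape ``$(\ast)$ holds'' (positive case, $\Pi^0_2$) or ``$(\ast)$ fails'' (negative case, $\Sigma^0_2$) for an appropriate auxiliary condition computable from $p_n$. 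The resulting definition is $\Sigma^0_2$ in $\mathcal{M}$, as required. The delicate part is the bookkeeping to ensure that the decision step in (b) and the code for the refined extension produced by Lemma~\ref{thm:choice_of_generic_neg} remain inside $\mathcal{M}$, which is guaranteed precisely because the definability of $(\ast)$ matches the available induction in $\mathcal{M}$.
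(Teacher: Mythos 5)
Your overall strategy matches the paper's: start from a trivial condition, build a generic descending sequence that decides every $\Pi^0_2$ formula via Lemma~\ref{thm:choice_of_generic_pos} and Lemma~\ref{thm:choice_of_generic_neg}, take the unique unbounded path $S$, and close to get $\mathcal{M}'$. You also correctly spot the typo in the statement (it should read $S\in\mathcal{M}'$). However, there is a genuine gap in the way you form $\mathcal{M}'$ and then verify $I\Sigma^0_2$.

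You set $\mathcal{M}'=\{T: T\leq_T G\oplus X,\ X\in\mathcal{M}\}$ where $G$ codes the entire generic sequence of conditions, and then you argue that every $\Sigma^0_2$ formula with a parameter in $\mathcal{M}'$ has its truth determined by the forcing clause $(\ast)$. But the forcing relation $\Vdash\phi(G)$ in the paper refers to the \emph{branch} $\Lambda$, not to the sequence of conditions --- the lemmas decide $\Pi^0_2$ formulas about $\Lambda$ only. So this reduction is valid only for $T\leq_T S\oplus X$ with $X\in\mathcal{M}$, not for $T\leq_T G\oplus X$. These are not the same: a condition contains infinite components (the family of partitions $S^n$ and the constraint sequence $W^n$), so the code of the sequence is in general strictly more powerful than the path $S$, and the forcing lemmas say nothing about $\Sigma^0_2$ formulas that use the full sequence as a parameter. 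Relatedly, your phrase ``Let $G\in\mathcal{M}$ code the resulting sequence'' cannot be right as written: the sequence is produced by a metatheoretic $\omega$-recursion using the extension lemmas and is not already in $\mathcal{M}$; if it were, $\mathcal{M}'=\mathcal{M}$ and the lemma would be vacuous.

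The fix is to do what the paper does: take $\mathcal{M}'$ to be the $\Delta^0_1$-closure of $\mathcal{M}\cup\{S\}$, where $S$ is the branch generic only. Then every $\Sigma^0_2$ formula with parameters in $\mathcal{M}'$ reduces (using the recursion theorem uniformly) to a $\Sigma^0_2$ formula with parameters $S$ and sets in $\mathcal{M}$, and your argument that some stage of the generic sequence decides each such instance, with the decision itself expressible by a $\Sigma^0_2$ formula of $\mathcal{M}$ via $(\ast)$, then goes through and yields $I\Sigma^0_2$ in $\mathcal{M}'$ from $I\Sigma^0_2$ in $\mathcal{M}$. Everything else in your outline --- the trivial initial condition, interleaving the three maintenance tasks, using c.1.3 for fallowness, using the cofinal $|U_n|=1$ stages for uniqueness of the path --- is correct and matches the intended argument.
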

\begin{proof}
  Take the condition $(F^0,F^0_\dagger,c^0_*,S^0,u^0,U^0,W^0,V^0)$ where $F^0=F^0_\dagger$ is the tree with domain $[0,-1]=\emptyset$, $c^0_*$ is the empty function, $S^0(n)=\{[0,n]\}$ for all $n$ is a family of size $1$, $u^0(0)$ is the unique (empty) branch of $F^0$, $U^0=\{0\}$, $W^0$ is the empty sequence, and $V^0=\emptyset$.  This is trivially a condition.

  We construct a generic sequence of conditions $(F^0,F^0_\dagger,c^0_*,S^0,u^0,U^0,W^0,V^0)\preceq (F^1,F^1_\dagger,c^1_*,S^1,u^1,U^1,W^1,V^1)\preceq\cdots$ using Lemmata \ref{thm:choice_of_generic_pos} and \ref{thm:choice_of_generic_neg} (and $I\Sigma^0_2$) to ensure that, for every $\Pi^0_2$ formula $\forall y\exists z\phi(y,z,G)$, there is an $n$ so that either $(F^n,F^n_\dagger,c^n_*,S^n,u^n,U^n,W^n,V^n)\vDash\forall y\exists z\phi(y,z,G)$ or $(F^n,F^n_\dagger,c^n_*,S^n,u^n,U^n,W^n,V^n)\vDash\exists y\forall z\neg\phi(y,z,G)$.

Let $S$ be the branch generic corresponding to this sequence.  Then we may take $\mathcal{M}'\supseteq\mathcal{M}\cup\{S\}$ by closing under computable definitions, so $(\mathbb{M},\mathcal{M}') \vDash\mathsf{RCA}_0+I\Sigma^0_2$, and $c\upharpoonright[S]^2$ is fallow.
\end{proof}

\begin{theorem}
  $\mathsf{fSEM}_{<\infty}$ is $\Pi^1_1$-conservative over $\mathsf{RCA}_0+I\Sigma^0_2$.
\end{theorem}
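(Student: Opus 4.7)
The plan is a standard union-of-chains argument built entirely on top of the preceding lemma. Fix a $\Pi^1_1$ sentence $\phi \equiv \forall X\, \psi(X)$ (with $\psi$ arithmetic) that is not provable from $\mathsf{RCA}_0 + I\Sigma^0_2$. By completeness and downward L\"owenheim--Skolem, there is a countable model $(\mathbb{M}, \mathcal{M}_0) \vDash \mathsf{RCA}_0 + I\Sigma^0_2 + \neg\phi$, witnessed by some $X_0 \in \mathcal{M}_0$ with $\neg\psi(X_0)$. It will suffice to extend $\mathcal{M}_0$ to a countable $\mathcal{M}_\omega$ over the same $\mathbb{M}$ such that $(\mathbb{M}, \mathcal{M}_\omega) \vDash \mathsf{RCA}_0 + I\Sigma^0_2 + \mathsf{fSEM}_{<\infty}$, since $X_0$ will still witness $\neg\phi$ there.

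First I would fix a dovetailing enumeration of requirements of the form ``address the stable coloring $c \in \mathcal{M}_n$'', indexed so that each such pair $(n, c)$ eventually appears at some later stage; this is possible because each $\mathcal{M}_n$ in the chain will be countable. Then I would construct a chain $\mathcal{M}_0 \subseteq \mathcal{M}_1 \subseteq \cdots$ of countable second-order parts, all over the fixed first-order part $\mathbb{M}$, so that each $(\mathbb{M}, \mathcal{M}_n) \vDash \mathsf{RCA}_0 + I\Sigma^0_2$. At step $n+1$, if the enumeration requests we address a stable coloring $c \in \mathcal{M}_n$, I would invoke the preceding lemma on $(\mathbb{M}, \mathcal{M}_n)$ and $c$ to obtain $\mathcal{M}_{n+1} \supseteq \mathcal{M}_n$ containing an infinite $S$ with $c \upharpoonright [S]^2$ fallow, while maintaining $\mathsf{RCA}_0 + I\Sigma^0_2$.

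Finally I would take $\mathcal{M}_\omega = \bigcup_n \mathcal{M}_n$. Since the first-order part is unchanged, $I\Sigma^0_2$ persists automatically. $\Delta^0_1$-comprehension is preserved in the union: any instance uses only finitely many set parameters, all of which lie in some $\mathcal{M}_n \vDash \mathsf{RCA}_0$, and the set it defines already exists in that $\mathcal{M}_n$ and hence in $\mathcal{M}_\omega$. Every stable coloring in $\mathcal{M}_\omega$ lies in some $\mathcal{M}_n$ and is addressed at a later stage of the enumeration, so $(\mathbb{M}, \mathcal{M}_\omega) \vDash \mathsf{fSEM}_{<\infty}$. Finally, $\neg\psi(X_0)$ is arithmetic with parameter $X_0 \in \mathcal{M}_\omega$, so its truth depends only on $\mathbb{M}$ and $X_0$ and persists from $\mathcal{M}_0$ to the union. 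Hence $(\mathbb{M}, \mathcal{M}_\omega) \vDash \mathsf{RCA}_0 + I\Sigma^0_2 + \mathsf{fSEM}_{<\infty} + \neg\phi$, which proves $\Pi^1_1$-conservativity. The essential technical work, namely producing $\mathcal{M}_{n+1}$ from $\mathcal{M}_n$ so as to simultaneously preserve $\mathsf{RCA}_0 + I\Sigma^0_2$ and add a fallow subset for a prescribed stable coloring, is exactly the content of the preceding lemma; the only remaining content here is the bookkeeping and the standard observation that all relevant properties persist at countable limits.
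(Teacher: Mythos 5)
Your proposal is correct and follows exactly the same route as the paper: start from a countable model of $\mathsf{RCA}_0 + I\Sigma^0_2$, iterate the preceding lemma to build a chain $\mathcal{M}_0 \subseteq \mathcal{M}_1 \subseteq \cdots$ over the fixed first-order part, and take the union to get a model of $\mathsf{RCA}_0 + I\Sigma^0_2 + \mathsf{fSEM}_{<\infty}$. You simply spell out the standard contrapositive/bookkeeping details (dovetailing, persistence of arithmetic facts and $\Delta^0_1$-comprehension at the limit) that the paper leaves implicit.
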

\begin{proof}
  Let $(\mathbb{M},\mathcal{M})\vDash\mathsf{RCA}_0+I\Sigma^0_2$ with $|\mathbb{M}|=|\mathcal{M}|=\aleph_0$.  By repeated applications of the preceeding lemma to construct $\mathcal{M}\subseteq\mathcal{M}_1\subseteq\mathcal{M}_2\subseteq\cdots$, we can take $\mathcal{M}_\omega=\bigcup_n\mathcal{M}_n$ so that $(\mathbb{M},\mathcal{M}_\omega)\vDash\mathsf{RCA}_0+I\Sigma^0_2$ and for every coloring $c\in\mathcal{M}_\omega$, there is some unbounded set $S\in\mathcal{M}_\omega$ so that $c\upharpoonright[S]^2$ is fallow.  Therefore $(\mathbb{M},\mathcal{M}_\omega)\vDash\mathsf{fSEM}_{<\infty}$.
\end{proof}

\section{Finitary Conservation}

In this section we give a finitary conservation proof, showing that $\WKLo+\EMinf$ is $\tilde\Pi^{0}_{3}$-conservative over $\RCAo$.  (Recall that a formula is $\tilde\Pi^0_3$ if it has the form $\forall X\phi(X)$ where $\phi$ is a $\Pi^0_3$ formula.)  The idea is that we approximate $\EMinf$ by a finite combinatorial principle which is a ``density notion'' for $\EMinf$.  We will follow the argument developed in \cite{PY2018, KY-unpublished}.

\subsection{$\alpha$-largeness and $\EMinf$-density}

We fix a primitive recursive notation for ordinals below $\ome^{\ome}$ as follows.
We consider ordinals described by the Cantor normal form $\alpha=\sum_{i<k}\ome^{n_{i}}$ 
where $n_{i}\in\N$ and $n_{0}\ge\dots\ge n_{k-1}$.
We write $1$ for $\ome^{0}$, and $\ome^{n}\cdot k$ for $\sum_{i<k}\ome^{n}$.
For a given $\alpha<\ome^{\ome}$ and $m\in \N$, define $0[m]=0$, $\alpha[m]=\beta$ 
if $\alpha=\beta+1$ and $\alpha[m]=\beta+\ome^{n-1}\cdot m$ if $\alpha=\beta+\ome^{n}$ for some $n\ge 1$.

\begin{definition}[$\II$, largeness]
Let $\alpha<\ome^{\ome}$, and let $n,k,m\in\N$.
\begin{enumerate}
 \item A set $X = \{ x_0 < \dots < x_{\ell-1} \}\subseteq_{\fin}\N$ 
is said to be \emph{$\alpha$-large} if $\alpha[x_{0}]\dots[x_{\ell-1}]=0$. 
In other words, any finite set is $0$-large, and $X$ is said to be $\alpha$-large if
\begin{itemize}
 \item $X\setminus \{\min X\}$ is $\beta$-large if $\alpha=\beta+1$,
 \item $X\setminus \{\min X\}$ is $(\beta+\ome^{n-1}\cdot\min X)$-large if $\alpha=\beta+\ome^{n}$.
\end{itemize}
 \item A set $X \subseteq_{\fin}\N$ 
is said to be \emph{$\EMinf$-$\alpha$-large} if for any $P:[X]^{n}\to \min X$, there exists $Y\subseteq X$ such that $P$ is fallow on $[Y]^{2}$ and $Y$ is $\alpha$-large.
\end{enumerate}
\end{definition}
The above definition of $\omega^{n}$-largeness causes a minor trouble if $\min X=0$.
To avoid this and simplify the notation, we will always consider a finite set $X\subseteq_{\fin}\N$ with $\min X>3$.

\begin{definition}[$\II$, $\EMinf$-density]\label{def-density-with-indicator}
We define the notion of \textit{$\EMinf$-$m$-density} for a finite set~$X\subseteq\N$ inductively as follows.
First, a set~$X$ is \textit{$\EMinf$-$0$-dense} if it is $\ome$-large and $\min X>3$.
Assuming the notion of~$\EMinf$-$m$-density is defined, 
a set~$X$ is \textit{$\EMinf$-$(m+1)$-dense} if
\begin{itemize}
 \item for any~$P:[X]^{2}\to \min X$,
there is an $\EMinf$-$m$-dense set~$Y \subseteq X$
such that $P$ is fallow on $[Y]^{2}$, and,
 \item for any partition $Z_{0}\sqcup\dots \sqcup Z_{\ell-1}=X$ such that $\ell\le Z_{0}<\dots<Z_{\ell-1}$, one of the $Z_{i}$'s is $\EMinf$-$m$-dense.
\end{itemize}
Note that there exists a $\Delta^{0}_{0}$-formula $\theta(m,X)$ saying that ``$X$ is $\EMinf$-$m$-dense.''
\end{definition}
Be aware that a density notion for $\EM$ is also defined in \cite{MR3659408}, but our density notion is different from theirs.
Indeed, the second condition above requires that $\EMinf$-$m$-dense set is at least $\ome^{m+1}$-large, thus $\EMinf$-$(m+1)$-dense set needs to be at least $\EMinf$-$\ome^{m+1}$-large.
Here, the second condition is needed to make it compatible with the indicator argument (see \cite[Section 3]{PY2018}).
Now, what we need for the conservation result in this section is the following.
\begin{theorem}
\label{thm:density-main-weak-version}
For any $m\in\omega$, there exists $m\in\omega$ such that $\II$ proves that
if $X\subseteq_{\fin}\N$ is $\ome^{n}$-large, then $X$ is $\EMinf$-$m$-dense.
\end{theorem}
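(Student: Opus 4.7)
The plan is to proceed by external induction on $m\in\omega$, producing at each step a value $n=n(m)\in\omega$ such that $\II$ proves: if $X\subseteq_{\fin}\N$ is $\omega^{n(m)}$-large (with $\min X>3$), then $X$ is $\EMinf$-$m$-dense. The base case $m=0$ is immediate from Definition~\ref{def-density-with-indicator}, since $\EMinf$-$0$-density amounts to $\omega$-largeness together with $\min X>3$; take $n(0)=1$.

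For the inductive step, fix $n(m)$ and seek $n(m+1)$ such that $\omega^{n(m+1)}$-largeness witnesses both clauses of $\EMinf$-$(m+1)$-density. By the induction hypothesis, it suffices in each clause to produce a subset that is $\omega^{n(m)}$-large. Clause~2 follows from the standard partition regularity of $\alpha$-largeness provable in $\II$, in the Ketonen--Solovay style used in the indicator framework of \cite{PY2018}: there is a primitive recursive $h$ so that if $X$ is $\omega^{h(k)}$-large and $X=\bigsqcup_{i<\ell}Z_{i}$ with $\ell\leq\min Z_{0}$, then some $Z_{i}$ is $\omega^{k}$-large. Clause~1 reduces to a \emph{finitary $\EMinf$-lemma}: there is a primitive recursive $g$ such that whenever $X$ is $\omega^{g(k)}$-large with $\min X>3$, every $P:[X]^{2}\to\min X$ admits a $P$-fallow subset $Y\subseteq X$ that is $\omega^{k}$-large. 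Taking $n(m+1)$ to be an appropriate combination of $g(n(m))$ and $h(n(m))$ completes the induction.

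To prove the finitary $\EMinf$-lemma, I would extract $Y$ by iterated stabilization. Given $y_{0}<\dots<y_{i-1}$ already chosen from $X$ and a current reservoir $R\subseteq X$, apply partition regularity to the coloring $z\mapsto(P(y_{0},z),\dots,P(y_{i-1},z))$ on $R$, which takes at most $(\min X)^{i}$ values, to extract $R'\subseteq R$ of slightly smaller largeness on which this tuple-coloring is constant; then set $y_{i}:=\min R'$ and iterate. The resulting $Y$ is \emph{stable for $P$} in the sense that $P(y_{i},y_{j})=P(y_{i},y_{k})$ whenever $i<j<k$, and stability immediately implies fallowness: for the triple $y_{i}<y_{j}<y_{k}$ we have $P(y_{i},y_{k})=P(y_{i},y_{j})\in\{P(y_{i},y_{j}),P(y_{j},y_{k})\}$. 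Each extraction step costs a bounded amount of largeness (controlled by $h$ applied to a bound absorbing $(\min X)^{i}$ colors), and the number of extractions needed to guarantee $\omega^{k}$-largeness at the end is itself primitive recursive in $k$, which produces $g$.

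The main obstacle I anticipate is accounting for the loss of largeness precisely enough that $g$ comes out provably primitive recursive in $\II$. The number of color-tuples $(\min X)^{i}$ to be absorbed at stage $i$ grows both with the ambient set and with the stage index, so the exponent in $\omega^{h(\cdot)}$ must be carefully inflated at each stage, and one must ensure that extracting the successive minima does not corrupt the $\omega$-exponent bookkeeping. This is the standard delicate part of $\omega^{n}$-largeness arguments, handled in the framework of \cite{PY2018, KY-unpublished} cited in the paper, and I would import that bookkeeping rather than redevelop it, verifying only that the transitivity step in the usual EM extraction can be replaced by the stabilization-to-fallowness argument above.
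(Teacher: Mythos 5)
Your high-level strategy—external induction on $m$, handling clause~2 of $\EMinf$-density by partition-regularity of $\alpha$-largeness and clause~1 by a finitary $\EMinf$-lemma of the form ``$\omega^{g(k)}$-large implies a $P$-fallow $\omega^{k}$-large subset exists''—matches the skeleton of the paper's argument (there, clause~1 is Theorem~\ref{thm:EMinf-largeness-main}(1), clause~2 is Lemma~\ref{lem:from-KY}(1)). The gap is in your proposed proof of the finitary $\EMinf$-lemma.

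You extract $Y$ by iterated stabilization, so $Y$ is $P$-\emph{stable} ($P(y_i,y_j)=P(y_i,y_k)$ for $i<j<k$), which is strictly stronger than $P$-fallow. A stable set of size $m$ in a $c$-coloring requires the ambient set to have size on the order of $c^{m}$: each new element carves the reservoir by a $c$-coloring, which is exactly one exponent of $\omega$ for $c\approx\min X$. That price is acceptable when $Y$ only needs to be $\omega$-large, i.e.\ $|Y|=\min X+1$; the paper's Theorem~\ref{thm:EMinf-omega-large} runs precisely this extraction and pays for it with $\omega^{3}$-largeness because an $\omega^{3}$-large $X$ has $|X|>(\min X+1)^{\min X+1}$. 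But for $Y$ to be $\omega^{k}$-large with $k\ge 2$, the number of elements of $Y$ — hence the number of stabilization steps — is not bounded in terms of $k$ alone: it grows without bound as $\min X$ does (already $\omega^{2}$-largeness demands roughly exponentially many elements above $\min Y$). Since each step costs on the order of one $\omega$-exponent, the total exponent loss is unbounded in the input, and no fixed $n=g(k)$ can pay for it. So the sentence ``the number of extractions needed to guarantee $\omega^{k}$-largeness at the end is itself primitive recursive in $k$'' is the false step: the count depends on $\min X$, not on $k$ alone.

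The bookkeeping from \cite{PY2018,KY-unpublished} you hope to import does not fix this, because that bookkeeping supports a different decomposition, not greedy stabilization. The paper instead proves a \emph{grouping} theorem (Theorem~\ref{thm:grouping-main}): $X$ is split into $\omega^{n}$-large blocks $F_{i}$ so that $P$ between distinct blocks depends only on the block indices, and $\{\max F_i\}$ is itself $\omega^k$-large. One then makes $P$ fallow on each block and on the block skeleton separately, and observes that the union is fallow (the key observation stated just before the proof of Theorem~\ref{thm:EMinf-largeness-main}). This exploits that fallowness, unlike stability, is \emph{compositional} across a grouping, which is what keeps the exponent bound linear ($18n$) rather than making it blow up with $\min X$. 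If you want to salvage a greedy flavor, you would have to replace ``stabilize the next element's color'' by ``extract the next whole block of a grouping,'' which is essentially Theorem~\ref{thm:grouping-main}.
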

We see this in Subsection~\ref{subsection:EM-largeness}.

\subsection{Conservation proof}\label{section:pi3-conservation}
To obtain the $\tilde\Pi^{0}_{3}$ conservation theorem for $\EMinf$, we will follow the indicator argument developed in \cite[Section~3]{PY2018}.

\begin{lemma}\label{lem:EM-indicator-construction}
Let $M$ be a countable nonstandard model of $\Ii$, and $X\subseteq M$ is an $M$-finite set which is $\EMinf$-$m$-dense for some $m\in M\setminus\omega$.
Then, there exists a cut $I\subseteq M$ such that $(I,\Cod(M/I))\models\WKLo+\EMinf$ and $X\cap I$ is unbounded in $I$.
\end{lemma}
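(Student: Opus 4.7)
The plan is to follow the indicator-style argument of \cite[Section~3]{PY2018}, adapting it from $\mathsf{RT}^2_2$ to $\EMinf$; the density notion in Definition~\ref{def-density-with-indicator} has been tailored precisely so that its two clauses yield the two basic moves of such an argument. Because $M$ is countable, fix an enumeration $\langle r_s \rangle_{s \in \omega}$ of all requirements: $M$-codes for binary trees (for $\WKLo$), $M$-codes for finitely-valued colorings of pairs (for $\EMinf$), and $\Sigma^0_1$ formulas with parameters (to force $I \models \Ii$). We want to produce an increasing sequence $x_0 < x_1 < \cdots$ in $X$, together with a decreasing chain of $M$-finite $\EMinf$-$m_s$-dense sets $Y_0 \supseteq Y_1 \supseteq \cdots$, with $Y_0 = X$, $m_s \in M \setminus \omega$, and $x_s < \min Y_s$, so that $I = \sup_s x_s$ is the desired cut.

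At stage $s+1$, we first apply the partition clause of density to split $Y_s$ into a short initial block containing a candidate $x_{s+1}$ and an $\EMinf$-$(m_s-1)$-dense residue $Y'_s$ lying above $x_{s+1}$. We then address $r_s$ inside $Y'_s$: if $r_s$ codes an $\EMinf$-instance $P$ already in $\Cod(M/I)$, the first clause of density gives an $\EMinf$-$(m_s-2)$-dense $Y_{s+1} \subseteq Y'_s$ on which $P$ is fallow; for a $\WKLo$-instance given by an infinite binary tree $T$, we use the partition clause, exactly as in \cite[Section~3]{PY2018}, to commit to one further bit of an extendible path through $T$ while preserving $\EMinf$-density of the residue. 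Since only finitely much density is consumed per standard stage and $m$ is nonstandard, $m_s$ remains nonstandard throughout, so the construction continues indefinitely.

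Setting $I = \sup_s x_s$, the set $I$ is closed under the basic arithmetic operations by the $\ome^{m_s}$-largeness of each $Y_s$ together with overspill, and $(I,\Cod(M/I)) \models \RCAo$ by the standard closure argument. The tree-paths built along the construction are unions of their finite approximations and hence lie in $\Cod(M/I)$, giving $\WKLo$. For each $\EMinf$-instance $P$ handled at some stage $s$, every later $x_t$ was chosen from inside $Y_{s+1}$, so $Y_{s+1} \cap I$ is unbounded in $I$, fallow for $P$, and coded in $\Cod(M/I)$. The main technical obstacle is the simultaneous bookkeeping of infinitely many requirements within a single nested chain of $M$-finite dense sets, together with the verification that the $\WKLo$-paths constructed are genuinely coded in $\Cod(M/I)$ rather than being external $\omega$-sequences; both points are standard in the indicator framework, and the combination of first clause and partition clause in Definition~\ref{def-density-with-indicator} is what makes the dual extractions compatible.
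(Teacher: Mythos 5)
Your overall indicator framework is correct, and the way you handle the $\EMinf$-requirements via the first clause of density matches the paper exactly. The gap is in how you obtain $\mathsf{WKL}_0$.

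You propose, for a $\mathsf{WKL}_0$-instance given by an infinite binary tree $T$, to "use the partition clause, exactly as in \cite{PY2018}, to commit to one further bit of an extendible path through $T$." This is not what \cite{PY2018} (or the paper) does, and as written it does not work. The partition clause of Definition~\ref{def-density-with-indicator} lets you split an \emph{interval of natural numbers} into consecutive blocks and keep a dense one; it gives no direct handle on branches of a tree. More seriously, even if one could commit to one bit per stage, the resulting path would be the union of an external $\omega$-sequence of finite approximations, which is not of the form $Z\cap I$ for any $M$-finite $Z$ and hence not automatically an element of $\Cod(M/I)$. You flag this very issue at the end of your proposal but dismiss it as "standard in the indicator framework"; with the bit-by-bit construction you describe, it is not standard, it is fatal.

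The paper's construction sidesteps trees entirely. Instead of enumerating trees, it enumerates $M$-finite \emph{sets} $E_i$. At the relevant stages, given $E_i$ with $|E_i|<\min X_{3i+1}$, the partition clause is applied to the partition of $X_{3i+1}$ into the at most $|E_i|+1$ blocks determined by the gaps of $E_i$; the surviving dense block $X_{3i+2}$ then satisfies $[\min X_{3i+2}+1,\max X_{3i+2}-1]\cap E_i=\emptyset$. Since $\min X_{3i+2}\in I$ while $\max X_{3i+2}\notin I$, this shows $E_i\cap I$ is bounded in $I$, i.e.\ $I$ is a \emph{semi-regular} cut. The classical theorem that a semi-regular cut $I$ in a model of $\Ii$ yields $(I,\Cod(M/I))\models\WKLo$ then gives the $\mathsf{WKL}_0$ part without ever building a path by hand: the path through a tree $T^*\in M$ coded in $\Cod(M/I)$ is the trace $\sigma\cap I$ of a single $M$-finite branch $\sigma$ of nonstandard length, whose existence is what semi-regularity buys you. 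To repair your argument, replace the tree-path stages by these $E_i$-avoidance stages and invoke semi-regularity.

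One minor point: you add $\Sigma^0_1$ formulas to the requirement list to force $I\models\Ii$, but this is unnecessary; any proper cut of a model of $\Ii$ automatically satisfies $\Ii$, and $\Delta^0_1$-comprehension for $\Cod(M/I)$ is routine.
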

\begin{proof}
We follow the proof of \cite[Lemma~3.2]{PY2018}.
Let $M\models\Ii$ be a countable nonstandard model, and $X\subseteq M$ be $M$-finite set which is $\EMinf$-$m$-dense for some $m\in M\setminus \omega$.
Let $\{E_{i}\}_{i\in\omega}$ be an enumeration of all $M$-finite sets 
such that each $M$-finite set appears infinitely many times, and
$\{P_{i}\}_{i\in\omega}$ be an enumeration of all $M$-finite functions from $[[0,\max X]]^{2}$ 
to $c_{i}<\max X$ such that each function appears infinitely many times.

We will construct an $\omega$-length sequence of $M$-finite sets 
$X=X_{0}\supseteq X_{1}\supseteq\dots$ so that for each~$i \in \omega$, $X_{i}$ is $\EMinf$-$(m-i)$-dense, 
$P_{i}$ is fallow on $[X_{3i+1}]^{2}$ if $c_{i}<\min X_{3i}$, and $[\min X_{3i+2}+1,\max X_{3i+2}-1]\cap E_{i}=\emptyset$ if $|E_{i}|<\min X_{3i+1}$, and $\min X_{3i+2}<X_{3i+3}$.
For each $i\in\omega$, we do the following.
At the stage $3i$, 
if $\min X_{3i}>c_{i}$, take $X_{3i+1}\subseteq X_{3i}$ so that $P_{i}$ is fallow on $[X_{3i+1}]^{2}$ by the first condition of $\EMinf$-density,
and otherwise, put $X_{3i+1}=X_{3i}$.
At the stage $3i+1$, 
if $\min X_{3i+1}>|E_{i}|$, take $X_{3i+2}\subseteq X_{3i+1}$ so that $[\min X_{3i+2}+1,\max X_{3i+2}-1]\cap E_{i}=\emptyset$ by the second condition of $\EMinf$-density,
and otherwise, put $X_{3i+2}=X_{3i+1}$.
At the stage $3i+2$, put $X_{3i+3}=X_{3i+2}\setminus \{\min X_{3i+2}\}$.

Now, let $I=\sup\{\min X_{i}\mid i\in\omega\}\subseteq_{e} M$.
By the construction of the stages $3i+1$, $I$ is a semi-regular cut, thus $(I,\Cod(M/I))\models\WKLo$.
By the construction of the stages $3i+2$, $X_{i}\cap I$ is infinite in $I$ for any $i\in\omega$.
To check that $(I,\Cod(M/I))\models\EMinf$, we will see the construction of the stages $3i$.
Let $P:[I]^{2}\to c$ be a function which is a member of $\Cod(M/I)$ and $c\in I$.
Then, there exists some $i\in\omega$ such that $c=c_{i}$, $P=P_{i}\cap I$ and $c_{i}<\min X_{3i}$.
Hence $P$ is fallow on $[X_{3i+1}\cap I]^{2}$, and $X_{3i+1}\cap I\in\Cod(M/I)$ is an infinite set in $I$.
\end{proof}
On the other hand, it is not hard to check that $\RCAo$ proves that any infinite set contains $\ome^{k}$-large subset, for any standard natural number $k\in\omega$.
Assuming Theorem~\ref{thm:density-main-weak-version}, we have the following.
\begin{lemma}\label{lem:ome-large-in-RCA}
Let $k\in\omega$ be a standard natural number.
Then, it is provable within $\RCAo$ that for any infinite set $X_{0}\subseteq\N$, there exists a finite set $X\subseteq X_{0}$ which is $\EMinf$-${k}$-dense.
\end{lemma}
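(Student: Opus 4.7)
The plan is to combine Theorem~\ref{thm:density-main-weak-version} with the elementary observation, explicitly noted just above the lemma, that $\RCAo$ proves every infinite set contains an $\ome^{n}$-large finite subset for each standard $n \in \omega$.

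Fix a standard $k \in \omega$. Applying Theorem~\ref{thm:density-main-weak-version} in the metatheory supplies a standard $n \in \omega$ (depending on $k$) such that $\II$---and hence $\RCAo$---proves the implication ``every $\ome^{n}$-large finite set is $\EMinf$-$k$-dense.'' Now I argue inside $\RCAo$: given an infinite $X_{0} \subseteq \N$, the cited largeness fact furnishes a finite $\ome^{n}$-large subset $X \subseteq X_{0}$, and the implication then promotes $X$ to being $\EMinf$-$k$-dense, which is the desired conclusion.

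The $\ome^{n}$-largeness observation itself is proved by an external induction on the standard $n$. The base case reduces to selecting enough elements above $\min X_{0}$, which is immediate since $X_{0}$ is infinite. For the inductive step one picks $x_{0}=\min X_{0}$ (passing to a tail of $X_{0}$ beforehand to ensure $x_{0}>3$), and then iteratively extracts finite $\ome^{n-1}$-large pieces from successive tails of $X_{0}$; the infinitude of $X_{0}$ ensures each extraction succeeds, and the recursive clauses defining $\alpha[m]$ guarantee that the resulting union witnesses $\ome^{n}$-largeness. Since ``$X$ is $\ome^{n}$-large'' is $\Delta^{0}_{0}$ for each fixed standard $n$, this entire construction is a bounded computable procedure justifiable in $\RCAo$ using $\Sigma^{0}_{1}$-induction.

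There is no serious obstacle in this lemma itself. All of the difficulty is packaged inside Theorem~\ref{thm:density-main-weak-version}, to be established in Subsection~\ref{subsection:EM-largeness}; the present lemma is essentially just a clean composition of that theorem with the standard largeness observation, and it is precisely the ingredient needed to drive Lemma~\ref{lem:EM-indicator-construction} together with compactness (via overspill) toward the desired $\tilde\Pi^{0}_{3}$-conservation.
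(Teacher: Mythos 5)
Your proof is correct and matches the paper's approach: the paper itself treats this lemma as an immediate consequence of the preceding observation (that $\RCAo$ proves every infinite set contains an $\ome^{n}$-large finite subset for each standard $n$) together with Theorem~\ref{thm:density-main-weak-version}, applied in the metatheory to produce the required standard $n$ from the given standard $k$. Your additional sketch of the external induction establishing the $\ome^{n}$-largeness observation, including the detail of passing to a tail to secure $\min X > 3$, fills in exactly what the paper leaves to the reader.
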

Thus, combining Lemmas~\ref{lem:EM-indicator-construction} and \ref{lem:ome-large-in-RCA}, we have the following.
\begin{theorem}
$\WKLo+\EMinf$ is a $\tilde\Pi^{0}_{3}$-conservative extension of $\II$.
\end{theorem}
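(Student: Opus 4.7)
My plan is to apply the indicator method of \cite{PY2018} with $\EMinf$-density as the indicator; Lemma~\ref{lem:EM-indicator-construction} supplies the cut-extraction half, and Lemma~\ref{lem:ome-large-in-RCA} (resting on Theorem~\ref{thm:density-main-weak-version}) supplies the ``density is provable in the base theory'' half of the standard $\tilde\Pi^{0}_{3}$-conservation machinery.

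Argue contrapositively. Suppose $\WKLo + \EMinf \vdash \sigma$ for some $\tilde\Pi^{0}_{3}$-sentence $\sigma = \forall X\,\forall x\,\exists y\,\forall z\,\theta(x,y,z,X)$ with $\theta$ bounded, but $\II \not\vdash \sigma$. Pick a countable nonstandard $(M,\mathcal{X}) \models \RCAo$ together with $X_{0} \in \mathcal{X}$ and $x_{0} \in M$ witnessing $M \models \forall y\,\exists z\,\neg\theta(x_{0},y,z,X_{0})$. By Lemma~\ref{lem:ome-large-in-RCA} applied inside $M$ to the infinite set $\{n : n > x_{0}\}$, for each standard $k$ there is an $M$-finite $\EMinf$-$k$-dense subset of $(x_{0},\infty)$; $\Sigma^{0}_{1}$-overspill (available in $M \models \Ii$) then delivers some $m \in M \setminus \omega$ and an $\EMinf$-$m$-dense $M$-finite set $X$ with $\min X > x_{0}$.

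Next, run the construction of Lemma~\ref{lem:EM-indicator-construction} on $X$, but with one additional interleaved family of stages devoted to preserving the $\Pi^{0}_{2}$-counterexample. At such a stage $i$, enumerate the next unprocessed $y < \min X_{i}$, read off in $M$ a witness $z_{y}$ with $M \models \neg\theta(x_{0},y,z_{y},X_{0})$, and apply the second clause of $\EMinf$-density to the partition $X_{i} = (X_{i}\cap [0,z_{y}]) \sqcup (X_{i}\cap (z_{y},\infty))$ to pass to an $\EMinf$-$(m-i')$-dense $X_{i+1}$ with $\min X_{i+1} > z_{y}$. Let $I = \sup_{i}\min X_{i}$. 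The original argument of Lemma~\ref{lem:EM-indicator-construction} yields $(I,\Cod(M/I)) \models \WKLo + \EMinf$; the added stages guarantee that for every $y \in I$ the corresponding $z_{y}$ also lies in $I$, and because $\theta$ is $\Delta^{0}_{0}$ we obtain $(I,\Cod(M/I)) \models \forall y\,\exists z\,\neg\theta(x_{0},y,z,X_{0}\cap I)$. Together with $x_{0} \in I$ (from $x_{0} < \min X \leq I$) this gives $(I,\Cod(M/I)) \models \WKLo + \EMinf + \neg\sigma$, contradicting $\WKLo + \EMinf \vdash \sigma$.

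The principal obstacle is justifying the added partition step --- specifically, forcing the high piece to be the $\EMinf$-dense one. This rests on the largeness content of $\EMinf$-$k$-density: an $\EMinf$-$k$-dense set has cardinality at least a fast-growing function $F(\min X, k)$ of its minimum and of $k$ (coming from the $\ome^{k+1}$-largeness requirement), so whenever $z_{y} < F(\min X_{i}, m-i-1)$ the low subset $X_{i}\cap [0,z_{y}]$ is too small to be $\EMinf$-$(m-i-1)$-dense and density must fall on the high piece. Arranging the enumeration of $y$'s (together with the choice of nonstandard $m$ in the overspill step) so that this domination holds throughout is routine countable-model bookkeeping, directly parallel to the corresponding step in \cite{PY2018, KY-unpublished}.
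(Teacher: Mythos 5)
Your overall strategy---contrapositive, indicator method, overspill to get a nonstandardly dense set, then cut extraction via Lemma~\ref{lem:EM-indicator-construction}---is the intended one (the paper cites Theorem~3.3 of \cite{PY2018}). But the mechanism you use to preserve the $\Pi^0_2$-counterexample has a genuine gap.

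You interleave stages in which you partition $X_i$ at $z_y$ and ``apply the second clause of $\EMinf$-density'' to pass to an $\EMinf$-dense $X_{i+1}$ above $z_y$. The second clause only guarantees that \emph{some} block of the partition is dense, not the upper one. You acknowledge this and propose to force the upper block via a cardinality lower bound on dense sets; but this fails because $z_y$ is fixed by $M, x_0, X_0$ and can be arbitrarily large relative to $X_i$. In particular nothing rules out $z_y \ge \max X$, in which case the upper block is empty and your step cannot proceed at all. If the lower block $X_i \cap [0,z_y]$ is the dense one and you descend into it, you have $\max X_j \le z_y$ for all $j > i$, hence $I \le \max X_j \le z_y$ and in fact $z_y > I$; since $z_y$ is the \emph{least} $z$ with $\neg\theta(x_0,y,z,X_0)$, the cut then satisfies $\forall z\,\theta(x_0,y,z,X_0\cap I)$ and the counterexample is lost. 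No bookkeeping over the choice of $m$ or the order of the $y$'s removes this, because the $z_y$'s are not under your control.

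The repair is to move the counterexample-handling from the cut construction into the \emph{choice of the infinite set} to which Lemma~\ref{lem:ome-large-in-RCA} is applied, leaving Lemma~\ref{lem:EM-indicator-construction} untouched. In $M$ define $a_0 = x_0 + 4$ and $a_{n+1}$ to be the least $a > a_n$ such that $\forall y \le a_n\,\exists z < a\,\neg\theta(x_0,y,z,X_0)$; by $B\Sigma^0_1$ this recursion is total and $\Delta^0_1$, so $W = \{a_n : n \in M\}$ is an infinite set in $\mathcal{X}$. Apply Lemma~\ref{lem:ome-large-in-RCA} to $W$ and overspill to get an $\EMinf$-$m$-dense $X \subseteq W$ with $m$ nonstandard, then run Lemma~\ref{lem:EM-indicator-construction} on this $X$ to get the cut $I$ with $(I,\Cod(M/I)) \models \WKLo + \EMinf$ and $X \cap I$ unbounded in $I$. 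Now given $y \in I$, pick $a_n \in X \cap I$ with $a_n > y$ and then $a_{n'} \in X \cap I$ with $n' > n$; since $a_{n+1} \le a_{n'} \in I$, the witness $z < a_{n+1}$ for $y$ lies in $I$, and as $\theta$ is $\Delta^0_0$ this gives $(I,\Cod(M/I)) \models \neg\theta(x_0,y,z,X_0\cap I)$. Thus $\neg\sigma$ holds in the cut, which is the contradiction you wanted. Once you route the argument through $W$ in this way, the extra interleaved partition stages are unnecessary.
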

\begin{proof}
Same as Theorem~3.3 of \cite{PY2018}.
\end{proof}
\begin{cor}
$\WKLo+\EMinf$ does not imply $\III$.
\end{cor}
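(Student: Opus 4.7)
\emph{Proof proposal.} The plan is to apply the preceding conservation theorem in contrapositive form. If $\WKLo+\EMinf$ implied $\III$, then every $\tilde\Pi^0_3$ consequence of $\III$ would also be a theorem of $\WKLo+\EMinf$, and hence, by the conservation result just established, a theorem of $\II$. So it suffices to exhibit a single $\tilde\Pi^0_3$ sentence---and in fact a first-order $\Pi^0_2$ sentence will do, since every first-order $\Pi^0_3$ sentence is trivially $\tilde\Pi^0_3$---which is provable in $\III$ but not in $\II$.

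Such a sentence is supplied by the classical analysis of the provably total recursive functions of the first-order induction hierarchy: those of $\II$ are exactly the primitive recursive functions, whereas $\III$ proves the totality of functions lying strictly beyond that class, for example functions at level $\omega^{\omega}$ of the fast-growing hierarchy such as (a suitable formalization of) the Ackermann function. The statement ``$F$ is total'' has the form $\forall x\,\exists y\ F(x)=y$ and is therefore $\Pi^0_2$, hence $\tilde\Pi^0_3$, and separates the two theories. This is all standard; see the discussion in \cite{MR1748522}.

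No serious obstacle arises: the real work was done in the previous theorem, and the only step here is to observe that the strictness of the first-order induction hierarchy between $\II$ and $\III$ is already witnessed at a complexity low enough to be captured by $\tilde\Pi^0_3$-conservation. The assumed implication $\WKLo+\EMinf\vdash\III$ would therefore force $\II$ to prove a sentence it demonstrably does not, and so the implication fails.
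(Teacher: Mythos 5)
Your proof is correct, and its overall shape matches what any proof of this corollary must look like: use the conservation theorem plus a $\tilde\Pi^0_3$ sentence that separates $\III$ from $\II$. The one substantive difference is the choice of separating sentence. The paper invokes G\"odel's second incompleteness theorem and uses $\mathrm{Con}(\II)$, which is $\Pi^0_1$ and hence $\tilde\Pi^0_3$: $\III$ proves it, $\II$ does not. You instead use a provably-total-function separation, pointing to a $\Pi^0_2$ totality statement such as that of the Ackermann function. Both are standard and both work; the G\"odelian route is a one-liner while yours leans on the Ketonen--Solovay-style characterization of provably recursive functions, which is more computability-theoretic in flavor but slightly more machinery to cite. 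One small inaccuracy worth fixing: the Ackermann function sits at level $\omega$ of the fast-growing hierarchy, not $\omega^{\omega}$; $\omega^{\omega}$ is the ordinal bounding \emph{all} provably total functions of $\III$, while the Ackermann function is just the first natural example beyond the primitive recursive functions (which exhaust the provably total functions of $\II$). This does not affect the correctness of your argument, since any such function separating the two classes will do.
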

\begin{proof}
$\III$ is not a $\tilde\Pi^{0}_{3}$-conservative extension of $\II$ since it implies the consistency of $\II$.
\end{proof}

\subsection{Calculation for $\EMinf$-$\ome^{n}$-largeness}\label{subsection:EM-largeness}

In this subsection, we will prove Theorem~\ref{thm:density-main-weak-version}.
We will essentially follow the combinatorial argument in \cite{KY-unpublished}.
In \cite{KS81}, Ketonen and Solovay analyze the Paris-Harrington principle by $\alpha$-largeness notion, and clarify the relation between Paris-Harrington principle and hierarchy of fast growing functions.
The case for $\EMinf$ is the following.
\begin{theorem}[$\II$]\label{thm:EMinf-omega-large}
If $X\subseteq_{\fin}\N$ is $\ome^{3}$-large and $\min X>3$, then it is $\EMinf$-$\ome$-large.
\end{theorem}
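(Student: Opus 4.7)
Let $X$ be $\omega^{3}$-large with $\min X=n>3$, and let $P:[X]^{2}\to n$ be given; I seek an $\omega$-large $Y\subseteq X$ on which $P$ is fallow. The plan is a greedy pre-homogeneity construction in the style of the classical Erd\H{o}s--Moser and Ramsey arguments, with quantitative bookkeeping in terms of $\alpha$-largeness.

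Inductively, I would build $Y=\{y_{0}<y_{1}<\cdots\}$ together with a shrinking candidate set $C_{k}\subseteq X$ satisfying $\min C_{k}>y_{k-1}$ and the \emph{pre-homogeneity} property: for each $j<k$, the function $y\mapsto P(y_{j},y)$ is constant on $C_{k}$. A short computation analogous to the end of the proof of the equivalence of $\EMinf$ with $\mathsf{EM}_{\times}$ above shows that pre-homogeneity with respect to all prior $y_{j}$ is exactly what is needed for any extension $y_{k}\in C_{k}$ of $Y_{k}=\{y_{0},\ldots,y_{k-1}\}$ to preserve fallowness. Moreover, if the final candidate set $C_{K}$ still has $|C_{K}|>\min C_{K}$, then in fact every element of $C_{K}$ can be appended en bloc: the only new fallow triples are of the form $(y_{j},y,y')$ with $y_{j}\in Y_{K}$ and $y,y'\in C_{K}$, and by pre-homogeneity $P(y_{j},y)=P(y_{j},y')$, which forces the fallow condition immediately.

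To start, set $y_{0}=n$ and $C_{0}=X\setminus\{y_{0}\}$, which by the definition of $\omega^{3}$-largeness is $\omega^{2}\cdot n$-large. At each subsequent stage, take $y_{k}=\min C_{k-1}$, partition $C_{k-1}\setminus\{y_{k}\}$ into $n$ classes via $y\mapsto P(y_{k},y)$, and let $C_{k}$ be the class of largest $\alpha$-largeness. The key quantitative tool is the standard $\alpha$-largeness pigeonhole: if $Z$ is $\omega^{\alpha}\cdot k$-large with $\min Z\geq k$ and is $k$-partitioned, then one class is $\omega^{\alpha}$-large. Combined with the fundamental-sequence computation $\omega^{\alpha+1}[m]=\omega^{\alpha}\cdot m$ and its iterates, one shows that $\omega^{3}$-largeness provides enough ordinal budget --- amplified by the strict growth of $\min C_{k}$ across stages, which keeps the multiplier in the $\omega^{\alpha}\cdot k$ decomposition large relative to the fixed number $n$ of color classes --- to continue the refinement far enough that the residual $C_{K}$ yields an $\omega$-large fallow extension of $Y_{K}$.

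The main obstacle will be the delicate ordinal arithmetic: one must verify precisely that the three levels of $\omega$ in $\omega^{3}$ suffice to accommodate $n$ rounds of $n$-partition pigeonhole, with the key saving coming from the fact that as the iteration proceeds, $\min C_{k}$ strictly increases, so the multiplier $k$ available in the $\omega^{\alpha}\cdot k$ decomposition grows, and $n$-partition retains most of the $\omega^{\alpha}$-level largeness rather than eroding it. The hypothesis $\min X>3$ enters exactly here, ensuring that the initial multiplier $n$ and the growing minima are large enough for this amplification to succeed; for smaller $\min X$ the ordinal accounting does not balance. The detailed combinatorial computation follows the unpublished calculations of~\cite{KY-unpublished} cited in the paper.
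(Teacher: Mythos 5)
Your overall plan — grow a sequence $y_{0}<y_{1}<\cdots$ where $y_{k}=\min C_{k-1}$ and $C_{k}$ is a color class of $y\mapsto P(y_{k},y)$ inside $C_{k-1}$, so that the resulting sequence is \emph{pre-homogeneous} ($P(y_{j},y_{j'})=P(y_{j},y_{j''})$ for $j<j'<j''$), hence fallow — is exactly the paper's idea, and is the content of the first half of the paper's proof. But two things go wrong.

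First, the ``en bloc'' finishing step is incorrect. If $C_{K}$ contains three or more elements, then appending all of $C_{K}$ to $Y_{K}$ introduces triples $y<y'<y''$ \emph{entirely inside} $C_{K}$, and nothing in the construction constrains $P(y,y'')$ relative to $P(y,y'),P(y',y'')$; pre-homogeneity of $Y_{K}$ with respect to $C_{K}$ only controls colors emanating from the pivots $y_{j}$, $j<K$. So the ``only new triples are of the form $(y_{j},y,y')$'' claim is false, and $Y_{K}\cup C_{K}$ need not be fallow. The paper avoids this entirely: the final set is nothing but the pivot sequence $\{a_{0},\ldots,a_{a}\}$ itself.

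Second, tracking $\alpha$-largeness of the candidate sets through the stages is both unnecessary and, as you yourself note, unfinished; it is unclear the ordinal budget of $\omega^{3}$ survives $a$ rounds of $a$-way pigeonhole measured in $\alpha$-largeness (the pigeonhole lemma for $\alpha$-largeness that is actually available loses a multiplicative factor each split and only handles two-way partitions cleanly). The paper sidesteps all of this with a single observation: an $\omega^{3}$-large set with $\min X=a>3$ already has $|X|>(a+1)^{a+1}$. From that point on the accounting is plain cardinality pigeonhole: starting from $|X_{0}|>(a+1)^{a+1}$ and splitting into at most $a$ color classes at each of $a$ stages leaves $|X_{i}|>(a+1)^{a+1-i}$ nonempty throughout, producing $a+1$ pivots $a_{0}=a<a_{1}<\cdots<a_{a}$, which is an $\omega$-large set (it has $\min+1$ elements) on which $P$ is pre-homogeneous and hence fallow. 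You should replace the $\alpha$-largeness bookkeeping with this cardinality reduction and drop the ``en bloc'' step.
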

\begin{proof}
We follow the idea of \cite[Theorem~10]{MR3659408}.
Let $X$ be $\ome^{3}$-large.
Put $a=\min X$.
Then, $|X|>(a+1)^{a+1}$.
For a given coloring $P:[X]^{2}\to \min X$, we will construct $X_{0}\supseteq X_{1}\supseteq\dots\supseteq X_{a}$ and $a_{0},a_{1},\dots,a_{a}\in X$ as follows.
Put $a_{0}=a$ and $X_{0}=X$.
For a given $X_{i}$, put $a_{i}=\min X_{i}$, and choose $X_{i+1}\subseteq X_{i}\setminus\{a_{i}\}$ to be one of $\{b\in X_{i}\setminus \{a_{i}\}: P(a_{i},b)=c\}$ $(c=0,\dots,a-1)$ so that $|X_{i+1}|>(a+1)^{a-i}$.
Then, for any $0\le i<j<k\le a$, $P(a_{i},a_{j})=P(a_{i},a_{k})$.
Thus, $Y=\{a_{0},a_{1},\dots,a_{a}\}$ is $\ome$-large and $P$ is fallow on $[Y]^{2}$.
\end{proof}
We will generalize the above theorem.
Indeed, we need a version with larger solutions.
Our target theorem is the following, which trivially implies Theorem~\ref{thm:density-main-weak-version}.
\begin{theorem}[$\II$]\label{thm:EMinf-largeness-main}
\begin{enumerate}
 \item If $X\subseteq_{\fin}\N$ is $\ome^{18n}$-large and $\min X>3$, then it is $\EMinf$-$\omega^{n}$-large.
 \item If $X\subseteq_{\fin}\N$ is $\ome^{18^{n}}$-large and $\min X>3$, then it is $\EMinf$-$n$-dense.
\end{enumerate}
\end{theorem}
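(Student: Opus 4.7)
The plan is to prove (1) and (2) by successive induction on $n$, with (1) containing the combinatorial heart and (2) a short pigeonhole wrap-up that bootstraps (1) together with the induction hypothesis for (2).

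For part (1), I would proceed by induction on $n$. The base case $n=1$ is immediate from Theorem~\ref{thm:EMinf-omega-large}, since any $\ome^{18}$-large set is $\ome^{3}$-large. For the inductive step, given $X$ that is $\ome^{18(n+1)}$-large with $a=\min X$ and $P:[X]^{2}\to a$, I iterate the greedy Erd\H{o}s-Moser construction from the proof of Theorem~\ref{thm:EMinf-omega-large}: form $X=X_{0}\supseteq X_{1}\supseteq\cdots$ by setting $a_{i}=\min X_{i}$ and choosing $X_{i+1}\subseteq X_{i}\setminus\{a_{i}\}$ to be a largest monochromatic class for $P(a_{i},\cdot)$. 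Monochromaticity of $P(a_{i},\cdot)$ on the tail guarantees that any $Y\subseteq\{a_{0},a_{1},\dots\}$ is fallow for $P$, exactly as in Theorem~\ref{thm:EMinf-omega-large}. To control largeness along the construction, I would invoke the standard Ketonen-Solovay ``Ramsey-for-largeness'' principle: any $\ome^{\beta+c}$-large set partitioned into at most $a=\min X$ classes has some $\ome^{\beta}$-large class, for a small absolute constant $c$. To bridge from the $\ome^{n}$-large output of a naive induction hypothesis to the $\ome^{n+1}$-large output actually required, I would strengthen the inductive statement to: ``if $m\leq\min X$ and $X$ is $\ome^{18n}\cdot m$-large, then $X$ is $\EMinf$-$\ome^{n}\cdot m$-large.'' After one greedy step, $X_{1}$ can be taken to be $\ome^{18n}\cdot a$-large (modulo a constant overhead absorbed by the factor $18$), so the strengthened hypothesis applied to $X_{1}$ yields a fallow $Y_{1}\subseteq X_{1}$ that is $\ome^{n}\cdot a$-large. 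Setting $Y=\{a_{0}\}\cup Y_{1}$ then gives a fallow set---the fallow condition across the initial element $a_{0}$ holds because $P(a_{0},\cdot)$ is constant on $X_{1}\supseteq Y_{1}$---and $Y\setminus\{a_{0}\}=Y_{1}$ is $\ome^{n}\cdot a_{0}$-large, so $Y$ is $\ome^{n+1}$-large, as required.

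For part (2), I would also induct on $n$. The base $n=0$ is definitional: $\ome^{18^{0}}=\ome$-largeness with $\min X>3$ is precisely $\EMinf$-$0$-density. For the inductive step, suppose $X$ is $\ome^{18^{n+1}}=\ome^{18\cdot 18^{n}}$-large; I verify both clauses of $\EMinf$-$(n+1)$-density. For the coloring clause, apply part~(1) with its parameter instantiated to $18^{n}$: any $P:[X]^{2}\to\min X$ admits a fallow $Y\subseteq X$ that is $\ome^{18^{n}}$-large, and by the induction hypothesis such a $Y$ is $\EMinf$-$n$-dense. For the partition clause, a standard largeness-pigeonhole lemma (ubiquitous in Ketonen-Solovay-style arguments) says that when an $\ome^{18^{n+1}}$-large set is partitioned as $Z_{0}\sqcup\cdots\sqcup Z_{\ell-1}$ with $\ell\leq\min Z_{0}=\min X$, some block $Z_{i}$ is $\ome^{18^{n}}$-large, and hence $\EMinf$-$n$-dense by the induction hypothesis.

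The main obstacle is the ordinal-exponent bookkeeping in part~(1): the constant $18$ must be large enough to absorb both the constant loss at each invocation of Ramsey-for-largeness and the passage to the strengthened ``$\ome^{n}\cdot m$-large'' form of the inductive statement that is needed to glue the prefix element $a_{0}$ onto the inductively produced fallow set. Once the correct form of the induction hypothesis is in place, the remainder is a routine unwinding of the recursive definitions of $\ome^{\alpha}$-largeness and of fallowness.
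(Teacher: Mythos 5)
Your plan for part~(2) matches the paper's: the coloring clause comes from part~(1) with parameter $18^{n}$ and the induction hypothesis, and the interval-partition clause comes from a decomposition lemma for $\alpha$-largeness (the paper invokes Lemma~\ref{lem:from-KY}.1 for this). Part~(1), however, takes a genuinely different route, and that route has a gap that I do not think can be closed.

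The paper's proof of part~(1) goes through the grouping principle (Theorem~\ref{thm:grouping-main}): from an $\ome^{18(n-1)+4}$-large, $\ome^{3}$-sparse set it extracts an $(\ome^{18(n-2)+4},\ome^{3})$-grouping $\langle Y_{i}\rangle$ for $P$, applies the base case to the set of representatives $\{\max Y_{i}\}$ to select an $\ome$-large subfamily whose representatives form a fallow set, applies the induction hypothesis inside each surviving group, and then glues. The reason gluing works is the defining feature of a grouping: $P(x,y)$ for $x\in Y_{i}$, $y\in Y_{j}$ ($i<j$) depends only on $(i,j)$, so fallowness of each block together with fallowness of the representatives yields fallowness of the union. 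This compositionality is the load-bearing step, and it is precisely what your proposal lacks.

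Your greedy step correctly shows that $\{a_{0}\}\cup Y_{1}$ is fallow when $P(a_{0},\cdot)$ is constant on $X_{1}\supseteq Y_{1}$ and $Y_{1}$ is fallow; that part is fine. But everything then hinges on the strengthened hypothesis ``$\ome^{18n}\cdot m$-large implies $\EMinf$-$\ome^{n}\cdot m$-large for $m\le\min X$,'' which you state and use but never prove, and I do not see how it can be proved inside your framework. The natural attempt --- split a $\ome^{18n}\cdot m$-large set into $m$ consecutive $\ome^{18n}$-large blocks, find a fallow $\ome^{n}$-large subset of each, and take the union --- fails, because fallowness does not pass to unions of fallow blocks: a triple $x<y<z$ with $x,y$ in one block and $z$ in another has no constraint forcing $P(x,z)\in\{P(x,y),P(y,z)\}$. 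Your greedy step does not repair this either: after peeling $a_{0}$ you need $Y_{1}$ to be $(\ome^{n+1}\cdot(m-1)+\ome^{n}\cdot a_{0})$-large, which is not of the form $\ome^{n}\cdot m'$; and even setting $m=1$, the argument produces only the unstrengthened conclusion at level $n+1$ from the strengthened hypothesis at level $n$, so the induction does not close. Repeating the greedy step many times is also unavailable: each step costs a fixed number of exponent levels (the pigeonhole loss of a factor roughly $a_{i}^{2}$ together with a peel), so extracting an $\ome^{n+1}$-large fallow set one element at a time would require an unbounded, not a fixed, starting exponent. The grouping principle sidesteps all of this by making progress ``in bulk'': it hands you $\ome$-many blocks with structured inter-block colors at once, and that is what lets a fixed exponent budget ($18$ per level) suffice. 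Without it, or some equivalent device controlling the inter-block coloring, part~(1) does not go through.
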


We will show this theorem by decomposing $\ome^{n}$-large sets.
We first prepare basic lemmas for $\alpha$-large sets.
We will use the following lemmas from \cite{KY-unpublished}.
A set $X$ is said to be \emph{$\alpha$-sparse} if $\min X>3$ and for any $x,y\in X$, $x<y$ implies the interval $[x,y)$ is $\alpha$-large.
One can easily check that if a set $X$ is $\ome^{3}$-sparse then it is quadratic exponentially sparse in the following sense: for any $x,y\in X$, $x<y$ implies $4^{x^{2}}<y$.
Trivially, any subset of an $\alpha$-sparse set is $\alpha$-sparse.
\begin{lemma}[Lemmas~1.3, 2.2 and 2.2 of \cite{KY-unpublished}, $\II$]\label{lem:from-KY}
\begin{enumerate}
 \item Let $\alpha=\alpha_{k-1}+\dots+\alpha_{0}<\ome^{\ome}$ be an ordinal described as a Cantor normal form.
Then, a set $X\subseteq_{\fin}\N$ is $\alpha$-large if and only if there is a partition $X=X_{0}\sqcup\dots\sqcup X_{k-1}$ such that $\max X_{i}<\min X_{i+1}$ and $X_{i}$ is $\alpha_{i}$-large.
 \item Let $n,m\in \N$.
If $X\subseteq_{\fin}\N$ is $(\ome^{n+m}+1)$-large and $\min X>3$, then there exists $Y\subseteq X$ such that $Y$ is $\ome^{n}$-large and $\ome^{m}$-sparse.
 \item If $X=Y_{0}\cup Y_{1}\subseteq_{\fin}\N$ is $\ome^{n}\cdot (4k)$-large and $\ome^{3}$-sparse, then $Y_{0}$ is $\ome^{n}\cdot k$-large or $Y_{1}$ is $\ome^{n}\cdot k$-large.
\end{enumerate}
\end{lemma}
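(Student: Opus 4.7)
The plan is to treat the three parts sequentially: the first is a routine unwinding of the Cantor-normal-form reduction, the second builds on it by induction, and the third is a pigeonhole-style argument that invokes the first two as tools.

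For Part (1), I would induct on the length $k$ of the Cantor normal form. The essential observation is that $\alpha[m]$ modifies only the smallest summand: if $\alpha=\alpha_{k-1}+\dots+\alpha_0$, then $\alpha[m]=\alpha_{k-1}+\dots+\alpha_1+\alpha_0[m]$. Therefore the reduction process witnessing $\alpha$-largeness of $X$ first exhausts an initial segment of $X$ to bring $\alpha_0$ down to $0$, then starts on $\alpha_1$, and so on. For the forward direction take $X_0$ to be precisely this initial segment and apply the induction hypothesis to $X\setminus X_0$ against $\alpha_{k-1}+\dots+\alpha_1$; for the converse, concatenate the reduction witnesses for each $\alpha_i$ in order.

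For Part (2), I would induct on $n$. For $n=0$ the singleton $Y=\{\min X\}$ is $1$-large and vacuously $\ome^{m}$-sparse. For the inductive step, use the ``$+1$'' to peel off $a_{0}=\min X$, so $X\setminus\{a_{0}\}$ is $\ome^{n+m}$-large; one further reduction gives that $X\setminus\{a_{0},a_{1}\}$ is $\ome^{n+m-1}\cdot a_{1}$-large (where $a_{1}=\min(X\setminus\{a_{0}\})$), and Part~(1) splits this into $a_{1}$ consecutive $\ome^{n+m-1}$-large blocks. The idea is to select the next element $y_{1}$ sufficiently deep inside the first block so that $[a_{0},y_{1})$ is $\ome^{m}$-large (possible because the block is $\ome^{n+m-1}$-large and hence contains an $\ome^{m}$-large prefix by Part~(1)), then iterate on the tail under the induction hypothesis at parameter $n-1$. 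The bookkeeping one must verify is that the residual tail still meets the $(\ome^{(n-1)+m}+1)$-largeness hypothesis needed to invoke the induction.

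For Part (3), I would use Part~(1) to write $X=X_{0}\sqcup\dots\sqcup X_{4k-1}$ with each $X_{i}$ consecutive and $\ome^{n}$-large. Call a piece \emph{pure} if it lies entirely in $Y_{0}$ or entirely in $Y_{1}$. If at least $k$ pure pieces lie in the same $Y_{j}$, Part~(1) assembles them into an $\ome^{n}\cdot k$-large subset of $Y_{j}$ and we are done. Otherwise more than $2k$ pieces are properly split, and here the $\ome^{3}$-sparseness hypothesis is essential: it forces quadratic-exponential gaps ($4^{x^{2}}<y$ for consecutive $x<y$ in $X$), so inside each $X_{i}$ the minimum is enormous compared to the number of elements involved. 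The plan is to show, by a short subsidiary induction on $n$, that any two-coloring of a $\ome^{3}$-sparse $\ome^{n}$-large set with sufficiently large minimum yields an $\ome^{n}$-large monochromatic subset on one side (after reducing $\ome^{n}$ by one step, the resulting $\ome^{n-1}\cdot\min$-largeness has enough slack, via Part~(1), to absorb the loss from splitting). Pigeonholing across the more-than-$2k$ split pieces selects $k$ contributions on a single side, whose consecutive union is $\ome^{n}\cdot k$-large by Part~(1). The main obstacle is precisely this subsidiary claim about split sparse pieces; the factor $4$ rather than $2$ is what absorbs both the pigeonhole loss and the one-element loss per piece.
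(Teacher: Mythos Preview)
The paper does not actually prove this lemma: it is quoted verbatim from \cite{KY-unpublished}, so there is no in-paper argument to compare your sketch against. That said, let me assess the sketch on its own merits.

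Your plans for Parts~(1) and~(2) are the standard ones and will go through. For~(2) the cleanest organisation is to define $Y=\{y_0<y_1<\cdots\}$ greedily by $y_0=\min X$ and $y_{i+1}=$ the least element of $X$ with $[y_i,y_{i+1})$ $\omega^m$-large, and then check (by an auxiliary induction on ordinals $<\omega^\omega$) that removing an $\omega^m$-large initial block from an $\omega^{n+m}$-large set leaves enough for $Y$ to end up $\omega^n$-large; this is the ``bookkeeping'' you flag.

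Part~(3), however, has a genuine gap. Your subsidiary claim---that a $2$-colouring of an $\omega^3$-sparse $\omega^n$-large set admits an $\omega^n$-large monochromatic class---is false already at $n=1$. Take $X=\{x_0<\cdots<x_{x_0}\}$ minimally $\omega$-large and $\omega^3$-sparse (so $x_1>4^{x_0^{2}}$), and colour $x_0$ alone with colour~$0$. Then $|Y_0|=1$, while $Y_1$ has $x_0$ elements but $\min Y_1=x_1>4^{x_0^{2}}$; neither class is $\omega$-large. More generally, after reducing $\omega^n$ to $\omega^{n-1}\cdot\min X$ and pigeonholing the blocks, you recover at best an $\omega^{n-1}\cdot\lceil\min X/2\rceil$-large class, but $\omega^n$-largeness of a subset $Z$ demands $\omega^{n-1}\cdot\min Z$-largeness with $\min Z\ge\min X$: the ``slack'' you are counting on is not there.

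The correct argument does not try to extract $\omega^n$-largeness from each block in isolation; it works by induction on $n$ and exploits the sparseness \emph{between} blocks. Roughly: with $X$ split into $4k$ consecutive $\omega^{n+1}$-large pieces $X_0,\dots,X_{4k-1}$, one applies the induction hypothesis to $X_{i+1}\setminus\{\min X_{i+1}\}$, which is $\omega^{n}\cdot\min X_{i+1}$-large, with parameter $k'=\lfloor\min X_{i+1}/4\rfloor$. Since $\min X_{i+1}>4^{(\max X_i)^2}\ge 4\max X_i$, the resulting $\omega^n\cdot k'$-large monochromatic piece is already $\omega^n\cdot\max X_i$-large, so adjoining a single point of the same colour from an earlier block yields an $\omega^{n+1}$-large set. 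Organising this across the $4k$ blocks is where the factor~$4$ is spent.
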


For a precise calculation of a finite version of $\EM$, the grouping principle introduced in \cite{PY2018} is very useful.
\begin{definition}[grouping]
Let $\alpha, \beta<\ome^{\ome}$.
Let $X\subseteq\N$, and let $P:[X]^{2}\to \min X$ be a coloring.
A finite family (sequence) of finite sets $\langle F_{i}\subseteq X\mid i<l \rangle$ is said to be an \emph{$(\alpha,\beta)$-grouping for $P$} if
\begin{enumerate}
 \item $\A i<j<l\, \max F_{i}<\min F_{j}$,
 \item for any $i<l$, $F_{i}$ is $\alpha$-large,
 \item $\{\max F_{i}\mid i<l\}$ is $\beta$-large, and,
 \item $\A i<j<l\, \A x,x'\in F_{i}\,\A y,y'\in F_{j}\, P(x,y)=P(x',y')$.
\end{enumerate}

\end{definition}
We say that a set $X\subseteq \N$ \emph{admits $(\alpha,\beta)$-grouping for $k$-colors} if for any coloring $P:[X]^{2}\to k$, there exists an $(\alpha,\beta)$-grouping for $P$.
In \cite{KY-unpublished}, they considered colorings on $[X]^{2}$ using two colors, but here we consider colorings on $[X]^{2}$ using $\min X$ colors.
We will check that the following strengthening of Theorem~2.3 of \cite{KY-unpublished} still holds in our setting.
\begin{theorem}[$\II$]
\label{thm:grouping-main}
Let $n,k\in\N$.
If $X\subseteq_{\fin}\N$ is $\ome^{n+6k}$-large and $\ome^{3}$-sparse, then $X$ admits $(\omega^{n},\omega^{k})$-grouping for $\min X$-colors.
\end{theorem}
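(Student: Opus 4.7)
I would proceed by induction on $k$, extending the two-color argument of \cite{KY-unpublished} by substituting the $\min X$-color iterative pigeonhole from the proof of Theorem~\ref{thm:EMinf-omega-large} wherever the two-color version is used. The base case $k=0$ is immediate: any $\ome^{n}$-large $X$ is itself a single-block $(\ome^{n},1)$-grouping for any $P$, since condition~(4) of the grouping definition is vacuous when there is only one block and $\{\max X\}$ is trivially $1$-large.

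For the inductive step $k\to k+1$, fix $X$ that is $\ome^{n+6k+6}$-large and $\ome^{3}$-sparse, set $a=\min X$, and let $P:[X]^{2}\to a$. First I would use Lemma~\ref{lem:from-KY}(1) together with the decrement rule (peeling off the minimum exposes a Cantor normal form $\ome^{n+6k+5}\cdot\min X$) to split a tail of $X$ into a long sequence of consecutive $\ome^{3}$-sparse blocks $Z_{0}<Z_{1}<\dots<Z_{L-1}$, each still $\ome^{n+6k}$-large, with the index length $L$ very large (at least $\ome^{k+5}$). Applying the inductive hypothesis inside each $Z_{j}$ yields an $(\ome^{n},\ome^{k})$-grouping $\mathcal{G}_{j}=\langle F^{j}_{0},\dots,F^{j}_{l_{j}-1}\rangle$ of $P\upharpoonright[Z_{j}]^{2}$; designate the first piece $G_{j}:=F^{j}_{0}$ as the representative block of index $j$. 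The remaining task is to pass to an $\ome^{k+1}$-large subsequence of indices $j_{0}<j_{1}<\dots$ on which condition~(4) of the grouping definition — that the color of any pair in $G_{j_{p}}\times G_{j_{q}}$ depends only on $(p,q)$ — is forced. For this I would iterate the $a$-color pigeonhole argument from Theorem~\ref{thm:EMinf-omega-large}: processing $p=0,1,\dots$ in turn, freeze the next representative $j_{p}$, then refine the current active family of indices by retaining only those $j$ on which $P(\min G_{j_{p}},-)$ takes a fixed value when applied to elements of $G_{j}$, shrinking the active family by a factor of at most $a+1$ per step. The loss at each step is absorbed by $\ome^{3}$-sparseness together with Lemma~\ref{lem:from-KY}(3), which converts iterated $(a+1)$-way splitting of an $\ome^{3}$-sparse set into controlled largeness losses in the ordinal hierarchy.

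The principal obstacle is the largeness bookkeeping through this iteration: because the number of colors $a=\min X$ is not fixed a priori, each refinement loses a $1/(a+1)$-fraction of the current index set rather than a single bit as in the two-color case, and one must simultaneously maintain fallowness of the colorings inherited from the $\mathcal{G}_{j}$ on the representatives. The $\ome^{6}$ gap between $n+6k$ and $n+6k+6$ is calibrated precisely for this: one $\ome$-worth covers the initial Cantor-normal-form decomposition into blocks, several $\ome$'s feed the iterated $a$-color pigeonhole via Lemma~\ref{lem:from-KY}(3) inside the $\ome^{3}$-sparse tail of $X$, and the remaining largeness guarantees the output index sequence is $\ome^{k+1}$-large, completing the $(\ome^{n},\ome^{k+1})$-grouping. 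The bulk of the work in a full write-up would be turning this qualitative accounting into explicit coefficients, exactly as in the calculation preceding Theorem~2.3 of \cite{KY-unpublished}.
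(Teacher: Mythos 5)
Your base case is fine, but the inductive step is organized in the wrong order, and the ordinal arithmetic does not work out. You propose to first decompose $X$ into $\ome^{n+6k}$-large blocks $Z_{0}<\dots<Z_{L-1}$, apply the inductive hypothesis inside each block, and then keep only a single representative piece $G_{j}=F^{j}_{0}$ per block before thinning for condition~(4). The trouble is the count of blocks: Lemma~\ref{lem:from-KY}(2) applied to an $\ome^{n+6k+6}$-large $X$ can give at best an $\ome^{6}$-large (and $\ome^{n+6k}$-sparse) set of block endpoints, so the family of representatives $\{G_{j}\}$ has index set only $\ome^{6}$-large-ish, and after the $\min X$-color pigeonhole thinning it is smaller still. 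That cannot produce an $\ome^{k+1}$-large set $\{\max G_{j_{p}}\}$ once $k$ exceeds a small constant; the claim that $L$ is ``at least $\ome^{k+5}$'' simply is not available from $\ome^{n+6k+6}$-largeness, and it is also a type error, since $L$ is a natural number. Discarding $F^{j}_{1},\dots,F^{j}_{l_{j}-1}$ throws away exactly the largeness that the induction hypothesis bought you.

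The paper reverses the order and this is essential, not cosmetic. It first applies Lemma~\ref{lem3:ome-n-ome-grouping} to the full $\ome^{n+6k}$-large $X$ to obtain an $(\ome^{n+6(k-1)},\ome)$-grouping $\langle Y_{i}\rangle_{i\le\ell}$ for $P$; since this is already a grouping, condition~(4) holds between any two $Y_{i}$'s, so the $\min X$-color pigeonhole (the content of Lemmas~\ref{lem1:stabilize-coloring}--\ref{lem3:ome-n-ome-grouping}) has already been paid for and is absorbed into the constant $6$. Only then is the inductive hypothesis applied inside each $Y_{i}$ to refine it into an $(\ome^{n},\ome^{k-1})$-grouping $\langle Z^{i}_{j}\rangle$, and crucially \emph{all} the pieces are kept: condition~(4) between $Z^{i}_{j}$ and $Z^{i'}_{j'}$ is inherited from the outer grouping when $i\ne i'$ and from the inner one when $i=i'$. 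Concatenating all $\ell$ inner $\ome^{k-1}$-large sequences of maxima, together with $\max Y_{0}$, yields an $\ome^{k}$-large set because $\ell\ge\max Y_{0}$. Your description of the pigeonhole mechanism inside Lemma~\ref{lem1:stabilize-coloring} (iterating a $\min X$-way split using $\ome^{3}$-sparseness and Lemma~\ref{lem:from-KY}(3)) is correct and is indeed what upgrades the two-color argument of \cite{KY-unpublished}, but it must be spent on building the outer grouping before the recursion, not on gluing representatives after it.
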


The proof is essentially the same as the original, but we need to upgrade some lemmas for the $\min X$ colors version.
\begin{lemma}[alteration of Lemma~2.4 of \cite{KY-unpublished}, $\II$]
\label{lem1:stabilize-coloring}
Let $X\subseteq_{\fin}\N$ be $\ome^{n+1}$-large and $\ome^{3}$-sparse, and let $c\in\N$ such that $4^{c^{2}}\le \min X$.
Then, we have the following.
\begin{enumerate}
 \item For any $\bar X\subseteq_{\fin}\N$ such that $|\bar X|\le c$ and $\max \bar X<\min X$ and for any coloring $P:[\bar X\cup X]^{2}\to c$, there exists $Y\subseteq X$ such that $Y$ is $\ome^{n}$-large and for any $x\in \bar X$ and $y,y'\in Y$, $P(x,y)=P(x,y')$.
 \item For any $\bar X\subseteq_{\fin}\N$ such that $|\bar X|\le c$ and $\max X<\min \bar X$ and for any coloring $P:[ X\cup \bar X]^{2}\to c$, there exists $Y\subseteq X$ such that $Y$ is $\ome^{n}$-large and for any $x\in \bar X$ and $y,y'\in Y$, $P(y,x)=P(y',x)$.
\end{enumerate}
\end{lemma}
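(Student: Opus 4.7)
The strategy is a straightforward iterative stabilization, handled one element of $\bar X$ at a time. I will describe part~(1); part~(2) is literally symmetric, replacing $P(x_i,\cdot)$ by $P(\cdot,x_i)$ throughout, and Lemma~4.8(3) is indifferent to this switch.

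Set $a := \min X$. By the definition of $\ome^{n+1}$-largeness, $X_0 := X\setminus\{a\}$ is $\ome^{n}\cdot a$-large, and as a subset of $X$ it inherits $\ome^{3}$-sparseness. Enumerate $\bar X = \{x_0,\ldots,x_{\ell-1}\}$ with $\ell \le c$. I build a decreasing chain $X_0 \supseteq X_1 \supseteq \cdots \supseteq X_\ell$ so that $P(x_{i-1},\cdot)$ is constant on $X_i$ for $i\ge 1$ and each $X_i$ is $\ome^{n}\cdot K_i$-large with $K_0 = a$. To go from $X_i$ to $X_{i+1}$: the map $b \mapsto P(x_i,b)$ partitions $X_i$ into at most $c$ classes $C_1,\ldots,C_c$. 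Write $X_i = C_1 \cup (C_2 \cup\cdots\cup C_c)$, apply Lemma~4.8(3) (taking $k = \lfloor K_i/4\rfloor$, using that $\ome^{n}\cdot K_i$-large implies $\ome^{n}\cdot 4k$-large), and recurse inside whichever side is large, peeling off one class at a time. After at most $c-1$ binary splits a single $C_j$ remains with largeness coefficient at least $\lfloor K_i/4^{c-1}\rfloor =: K_{i+1}$; take $X_{i+1} = C_j$. Setting $Y := X_\ell$ then gives $P(x_i,y)=P(x_i,y')$ for all $y,y'\in Y$ and $i<\ell$, and $Y$ is $\ome^{n}\cdot K_\ell$-large with $K_\ell \ge \lfloor a/4^{c(c-1)}\rfloor \ge 1$, since $4^{c^2}\le a$ and $c(c-1)\le c^2$; hence $Y$ is $\ome^{n}$-large, which is the desired conclusion.

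The only real obstacle is the arithmetic bookkeeping. Lemma~4.8(3) is a binary-split statement, so reducing a $c$-way partition to a single class costs a factor of $4^{c-1}$ per row of $\bar X$, and across all $\ell\le c$ rows the total cost is at most $4^{c(c-1)}$. The hypothesis $4^{c^{2}}\le \min X$ is calibrated precisely to absorb this loss while leaving $\ome^{n}$-largeness at the end. At each internal split one also needs the current coefficient to exceed $4$ so that Lemma~4.8(3) applies with $k\ge 1$, and the intermediate set to remain $\ome^{3}$-sparse; both are immediate from the headroom $a \ge 4^{c^{2}}$ and from the fact that sparseness is inherited by subsets.
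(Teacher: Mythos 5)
Your proof is correct and follows essentially the same approach as the paper's: strip off $\min X$ to get an $\omega^n\cdot a$-large set, then iterate over the elements of $\bar X$, using the binary split lemma (Lemma~\ref{lem:from-KY}.3) repeatedly to isolate a single color class at a cost of a factor of $4$ per split, with $4^{c^2}\le\min X$ absorbing the total loss. The only cosmetic difference is that the paper runs exactly $c^2$ splits indexed by $(j_1,j_2)$ with coefficients kept as exact powers of $4$, whereas you run $c-1$ peeling splits per element with floors, which yields the same bound.
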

\begin{proof}
We only show 1. (2 can be proved similarly.)
Since $X$ is $\ome^{n+1}$-large and $4^{c^{2}}<\min X$, $X\setminus\{\min X\}$ is $\ome^{n}\cdot 4^{c^{2}}$-large.
Put $Y_{0}=X\setminus\{\min X\}$.
Without loss of generality, we may assume that $\bar X=c$, so let $\{x_{i}: i<c\}$ be an enumeration of $\bar X$.
Construct a sequence $Y_{0}\supseteq Y_{1}\supseteq\dots\supseteq Y_{c^{2}}$ so that $Y_{i}$ is $\ome^{n}\cdot 4^{c^{2}-i}$-large.
If $Y_{i}$ is given and $i=j_{1}c+j_{2}$ with $j_{1},j_{2}<c$, then $Y_{i+1}$ can be chosen to be $\{y\in Y_{i}: P(x_{j_{1}},y)=j_{2}\}$ or $\{y\in Y_{i}: P(x_{j_{1}},y)\neq j_{2}\}$ by Lemma~\ref{lem:from-KY}.3.
Then, $Y=Y_{c^{2}}$ is the desired set.
\end{proof}

\begin{lemma}[alteration of Lemma~2.5 of \cite{KY-unpublished}, $\II$]
\label{lem2:ome-n-c-grouping}
Let $X\subseteq_{\fin}\N$ be $\ome^{n+3}$-large and $\ome^{3}$-sparse, and let $c\in\N$ such that ${c}\le \min X$.
Then, $X$ admits $(\ome^{n},c)$-grouping for $\min X$-colors.
\end{lemma}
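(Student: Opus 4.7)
The plan is to prove this lemma by induction on $c$, mirroring the proof of Lemma~2.5 in \cite{KY-unpublished} for the two-color case, with each stabilization step upgraded to the $\min X$-color variant provided by Lemma~\ref{lem1:stabilize-coloring}. The base case $c = 1$ is trivial: $\omega^{n+3}$-largeness implies $\omega^n$-largeness, so $X$ itself serves as a single-block grouping.

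For the inductive step, given $X$ that is $\omega^{n+3}$-large and $\omega^3$-sparse together with a coloring $P : [X]^2 \to \min X$, the strategy has four steps: (i) extract an initial segment $F_0 \subseteq X$ that is $\omega^{n+1}$-large---one extra power of $\omega$ beyond the target, to leave room for a later shrinkage; (ii) apply Lemma~\ref{lem1:stabilize-coloring} (using the $\omega^3$-sparsity of $X$ to supply the hypothesis $4^{(\min X)^2} \le \min Y$ at each call) to stabilize the cross-coloring so that the induced function $\phi(x) := P(x, y)$ depends only on $x \in F_0$ for $y$ in some $Y \subseteq X \setminus F_0$; (iii) refine $F_0$ to a $\phi$-monochromatic subset $F_0'$ by iterated bisection along $\phi$-values (using Lemma~\ref{lem:from-KY}.3 roughly $\log_2(\min X)$ times, for a total largeness cost of $(\min X)^2 < \omega$, absorbed by the extra $\omega$ in $\omega^{n+1}$), leaving $F_0'$ still $\omega^n$-large; (iv) apply the inductive hypothesis to $Y$ to obtain an $(\omega^n, c)$-grouping $F_1 < \cdots < F_c \subseteq Y$, yielding the desired $(\omega^n, c+1)$-grouping $F_0' < F_1 < \cdots < F_c$, with constant cross-colors on $F_0' \times F_j$ by construction and on $F_i \times F_j$ ($1 \le i < j$) by the inductive hypothesis.

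The main obstacle is the largeness accounting at step (ii). Lemma~\ref{lem1:stabilize-coloring} with color bound $\min X$ requires $|\bar X| \le \min X$ and costs one power of $\omega$ per call; since $|F_0|$ generically exceeds $\min X$ (for $n \ge 1$), direct iteration of the stabilization across $F_0$ would exhaust the $\omega^{n+3}$ budget long before $Y$ retains enough largeness for the inductive call. The combinatorial content of the $\omega^{n+3}$ bound is to reorganize the construction so that only $O(1)$ stabilization calls are made---for instance by alternating short-skeleton stabilizations with partial refinements of $F_0$ by $\phi$-values---keeping the total cost within the $\omega^3$ slack. The detailed ordinal arithmetic then follows \cite{KY-unpublished}, with each use of the 2-color stabilization replaced by the $\min X$-color variant from Lemma~\ref{lem1:stabilize-coloring}; the only non-trivial verification is that the upgrade from $c = 2$ to $c = \min X$ in the color bound is absorbed by $\omega^3$-sparsity via the inequality $4^{(\min X)^2} \le \min(\text{tail})$, which holds after peeling off finitely many initial elements of $X$.
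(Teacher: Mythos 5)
The paper's own proof of this lemma is a one-line deferral to \cite{KY-unpublished}, with Lemma~\ref{lem1:stabilize-coloring} substituted for the two-color stabilization lemma, so there is no detailed argument to compare against. Still, the scaffolding you supply has a concrete structural flaw: the induction on $c$ does not close. Step (iv) invokes the inductive hypothesis on the remainder $Y$, and that hypothesis requires $Y$ to again be $\omega^{n+3}$-large. This fails as soon as anything is removed from the front of $X$: by $\omega^3$-sparsity, $\min Y > 4^{(\min X)^2}$, and since $\alpha$-largeness is evaluated against the set's own minimum, $X\setminus\{\min X\}$ is only $\omega^{n+2}\cdot\min X$-large, which (for a set that is exactly $\omega^{n+3}$-large) is strictly less than $\omega^{n+3}$-large once the minimum has jumped past $\min X$. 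A fortiori $Y$, obtained after peeling off an $\omega^{n+1}$-large $F_0$ and stabilizing, cannot satisfy the same hypothesis. The fixed offset $\omega^{n+3}$, independent of $c$, is a strong signal that the construction must consist of a bounded number of nested stabilizations rather than a $c$-step recursion that peels off one block at a time.

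Separately, the obstacle you single out---that Lemma~\ref{lem1:stabilize-coloring} ``requires $|\bar X|\le\min X$''---is a misreading. The parameter $c$ in that lemma is free, constrained only by $4^{c^2}\le\min$ of the set being stabilized; nothing forces $c=\min X$. You may take $c=|F_0|$ provided $4^{|F_0|^2}\le\min Y$, and $\omega^3$-sparsity gives exactly this: the elements of $F_0$ already grow tower-fast, so $\min Y>\max F_0\gg 4^{|F_0|^2}$. Your proposed repair (``only $O(1)$ stabilization calls'') therefore aims at a non-problem while the real difficulty, the largeness budget for the recursion, is untouched. The accounting in step (iii) is also off: cutting $F_0$ to a $\phi$-monochromatic subset via $\log_2(\min X)$ bisections costs a multiplier of $(\min X)^2$, but dropping the first element of an $\omega^{n+1}$-large $F_0$ yields only $\omega^n\cdot\min X$-large, so one extra power of $\omega$ does not absorb the loss (you would need $F_0$ to start at $\omega^{n+2}$-large, worsening the budget further). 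In the end you defer the ``detailed ordinal arithmetic'' to \cite{KY-unpublished}, which is precisely where the content of this lemma lives; the surrounding sketch as written would not assemble into a proof.
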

\begin{proof}
The original proof works with the alternated Lemma~\ref{lem1:stabilize-coloring}.
\end{proof}
\begin{lemma}[alteration of Lemma~2.6 of \cite{KY-unpublished}, $\II$]
\label{lem3:ome-n-ome-grouping}
Let $X\subseteq_{\fin}\N$ be $\ome^{n+6}$-large and $\ome^{3}$-sparse.
Then, $X$ admits $(\ome^{n},\ome)$-grouping for $\min X$-colors.
\end{lemma}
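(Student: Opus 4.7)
The plan is to follow the proof of \cite[Lemma~2.6]{KY-unpublished} and to modify only those ingredients that involve the number of colors, using the $\min X$-color stabilization from Lemma~\ref{lem1:stabilize-coloring} and the $\min X$-color grouping from Lemma~\ref{lem2:ome-n-c-grouping} wherever the original argument invoked their two-color counterparts. The hypothesis ``$\ome^{n+6}$-large'' (rather than a smaller exponent) is precisely what leaves enough room to absorb the slightly worse bounds in the $\min X$-color stabilization step.

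Concretely, I would first exploit the extra largeness by applying Lemma~\ref{lem2:ome-n-c-grouping} at a ``meta'' level: viewing $X$ as $\ome^{(n+3)+3}$-large and $\ome^3$-sparse, extract an outer $(\ome^{n+3}, c)$-grouping $F_0 < F_1 < \cdots < F_{c-1}$ for the given coloring $P$, where $c$ is chosen close to $\min X$. Each block $F_i$ is then an $\ome^{n+3}$-large, $\ome^3$-sparse set, and $P$ is stable between any two distinct blocks. Next, within each $F_i$ I would apply Lemma~\ref{lem2:ome-n-c-grouping} to the restriction $P\upharpoonright [F_i]^2$ to obtain an inner $(\ome^n, c_i)$-grouping $G^i_0, \ldots, G^i_{c_i-1}$, with $c_i$ close to $\min F_i$. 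Concatenating the inner groupings in order of $i$ gives a sequence $\mathcal G = \langle G^0_0, \ldots, G^{c-1}_{c_{c-1}-1}\rangle$ of $\ome^n$-large sets. Pairwise stability of $P$ between distinct members of $\mathcal G$ is automatic: inside a single $F_i$ it comes from the inner grouping, and between two different blocks $F_i$ and $F_j$ it comes from the outer grouping.

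The remaining task, and the main obstacle, is verifying the $\ome$-largeness of $M=\{\max G^i_p\}$; equivalently, that the total number of inner groups $\sum_i c_i$ is at least $\max G^0_0 + 1$. Since each $G^i_p$ inherits $\ome^3$-sparsity from $X$, the quantity $\max G^0_0$ is controlled by an explicit elementary function of $\min X$ and $n$, while $\sum_i c_i$ is of order $c \cdot \min F_i$, which by choice of the outer grouping can be made to dominate this bound. The delicate point is that the $\min X$-color version of Lemma~\ref{lem1:stabilize-coloring} carries the constraint $4^{c^2} \le \min X$, so at each application of Lemma~\ref{lem2:ome-n-c-grouping} the color parameter cannot be pushed beyond roughly $\sqrt{\log_{4}(\min X)}$. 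Checking that this still leaves enough room for $\sum_i c_i$ to beat $\max G^0_0 + 1$ is exactly what the exponent $\ome^{n+6}$ provides, and this bookkeeping is where the proof ultimately rests.
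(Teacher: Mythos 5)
Your two-level grouping is a workable route and is close in spirit to what the cited original argument does; but you declare the key estimate open when it is already in hand, and the obstacle you attach to it is a phantom. The count is settled by looking at just the first two outer blocks: since $G^0_0\subseteq F_0$ and the outer blocks are separated, $\max G^0_0\le\max F_0<\min F_1$, so taking $c_1=\min F_1$ (permitted by Lemma~\ref{lem2:ome-n-c-grouping}, whose stated hypothesis is only $c\le\min X$) already gives $c_0+c_1\ge 1+\min F_1>\max G^0_0+1$, and hence $\{\max G^i_p\}$ is $\ome$-large. In fact $c=2$ suffices for the outer grouping, and you may take $F_0$ itself (being $\ome^{n+3}$-large, hence $\ome^n$-large) as the first block of the final grouping, so only the single inner grouping inside $F_1$ does any work; the extra outer blocks and their inner groupings are harmless but unnecessary.

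The constraint $4^{c^2}\le\min X$ is local to Lemma~\ref{lem1:stabilize-coloring} and is discharged inside the proof of Lemma~\ref{lem2:ome-n-c-grouping}: when one stabilizes against the previously chosen representatives, $\ome^3$-sparsity makes the minimum of the remaining tail outrun $4^{(\cdot)^2}$ of everything chosen so far, which is why the statement of Lemma~\ref{lem2:ome-n-c-grouping} really does permit $c\le\min X$. You should not re-impose the $4^{c^2}$ bound at the level where you invoke Lemma~\ref{lem2:ome-n-c-grouping}---that is double-counting a constraint that has already been paid. And even if you did insist on the conservative cap $c_1\le\sqrt{\log_4\min F_1}$, $\ome^3$-sparsity gives $4^{(\max G^0_0)^2}<\min F_1$, hence $\sqrt{\log_4\min F_1}>\max G^0_0$, so the bookkeeping closes in any case; there was nothing left to worry about.
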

\begin{proof}
The original proof works with the alternated Lemma~\ref{lem2:ome-n-c-grouping}.
\end{proof}

Finally we prove Theorem~\ref{thm:grouping-main} by using the previous lemma repeatedly.
\begin{proof}[\it Proof of Theorem~\ref{thm:grouping-main}.]
We will show by induction on $k$.
The case $k=0$ is trivial, and the case $k=1$ is Lemma~\ref{lem3:ome-n-ome-grouping}.
Assume $k\ge 2$, and let $X\subseteq_{\fin}\N$ be $\ome^{n+6k}$-large and $\ome^{3}$-sparse.
Fix a coloring $P:[X]^{2}\to \min X$.
Then, by Lemma~\ref{lem3:ome-n-ome-grouping}, there is an $(\ome^{n+6(k-1)},\omega)$-grouping $\langle Y_{i}:i\le \ell \rangle$ for $P$.
Here, $\{\max Y_{i}: i\le \ell\}$ is $\ome$-large, thus $\ell\ge \max Y_{0}$.
By the induction hypothesis, for each $1\le i\le \ell$, there is an $(\ome^{n},\ome^{k-1})$-grouping for $P$ $\langle Z^{i}_{j}\subseteq Y_{i}: j\le m_{i} \rangle$.
Since $\{\max Z^{i}_{j}: j\le m_{i}\}$ is $\ome^{k-1}$ for any $1\le i\le \ell$, the set $\{\max Y_{0}\}\cup\{\max Z^{i}_{j}: j\le m_{i},1\le i\le\ell\}$ is $\ome^{k}$-large.
Thus, $\langle Y_{0},Z^{1}_{0},\dots,Z^{1}_{m_{1}},\dots,Z^{\ell}_{0},\dots,Z^{\ell}_{m_{\ell}} \rangle$ is an $(\ome^{n},\ome^{k})$-grouping for $P$.
\end{proof}

We are now ready to prove Theorem~\ref{thm:EMinf-largeness-main}.
We will follow the idea of the proof of \cite[Lemma 7.2]{PY2018}.
The key idea here is that if $\langle X_{i}: i\le\ell \rangle$ is a grouping for $P$ and $P$ is fallow on any of $[X_{i}]^{2}$ and $[\{\max X_{i}: i\le\ell\}]^{2}$, then $P$ is fallow on $[\bigcup_{i\le\ell} X_{i}]^{2}$.
\begin{proof}[\it Proof of Theorem~\ref{thm:EMinf-largeness-main}.]
We first show 1.
By Lemma~\ref{lem:from-KY}.1, if $X$ is $\ome^{18n}$-large and $\min X>3$, then it is $\ome^{18(n-1)+7}+1$-large, hence one can take $X'\subseteq X$ so that $X'$ is $\ome^{18(n-1)+4}$-large and $\ome^{3}$-sparse.
So, it is enough to show that if $X$ is $\ome^{18(n-1)+4}$-large and $\ome^{3}$-sparse then it is $\EM$-$\omega^{n}$-large.
The case $n=1$ is Theorem~\ref{thm:EMinf-omega-large}.
Assume $n\ge 2$ and let $X\subseteq_{\fin}\N$ be $\ome^{18(n-1)+4}$-large.
Fix $P:[X]^{2}\to \min X$.
By Theorem~\ref{thm:grouping-main}, take an $(\ome^{18(n-2)+4}, \ome^{3})$-grouping $\langle Y_{i}:i\le \ell \rangle$ for $P$.
By applying Theorem~\ref{thm:EMinf-omega-large} for an $\ome^{3}$-large set $\{\max Y_{i}: i\le \ell\}$, take an $(\ome^{18(n-2)+4}, \ome)$-subgrouping $\langle Y_{i_{j}}:j\le \ell' \rangle$ such that $P$ is fallow on $[\{\max Y_{i_{j}}: j\le \ell'\}]^{2}$.
By the induction hypothesis, for each $j\le \ell'$, take $Z_{j}\subseteq Y_{i_{j}}$ such that $Z_{j}$ is $\ome^{n-1}$-large and $P$ is fallow on $[Z_{j}]^{2}$.
(Note that $\max Z_{0}\le \max Y_{i_{0}}\le \ell'$.)
Then, $H=\{\max Z_{0}\}\cup\bigcup_{1\le j\le \ell'}Z_{j}$ is $\ome^{n}$-large and $P$ is fallow on $[H]^{2}$.
This completes the proof of 1.

One may see 2 by induction on $n$.
The first condition for the density follows from 1.
The second condition for the density follows from Lemma~\ref{lem:from-KY}.1.
\end{proof}

\section{Questions}

While the result above places an upper bound on the strength of $\mathsf{fEM}_{<\infty}$, no corresponding lower bound is known.  More generally, it is unclear what the first order part of $\mathsf{fEM}_{<\infty}$ (and even $\mathsf{EM}$) is.  $\mathsf{EM}$ is known to imply $B\Sigma_2$ (\cite{MR2925280}), and the tree structure used in the forcing above (and in other arguments controlling the strength of $\mathsf{EM}$ \cite{MR3579121,Patey:2015bh}) is 
related with a ``bounded monotone enumeration'' as introduced in \cite{MR3194495}.
Therefore it is possible that $\mathsf{EM}$ implies some variation of the principle $\mathsf{BME}$ which is not provable in $B\Sigma_2$ \cite{MR3518781}.
Note that $\mathsf{BME}$ itself is a $\tilde\Pi^{0}_{3}$-statement, so the original version is not provable from $\mathsf{WKL}_0+\mathsf{fEM}_{<\infty}$.
\begin{question}
  Does $\mathsf{RCA}_0+\mathsf{fEM}_{<\infty}$ or even $\mathsf{RCA}_0+\mathsf{EM}_{<\infty}$ have any first-order consequences over $B\Sigma_2$?
%
%
\end{question}

A related question is to consider how many instances of the ordinary $\mathsf{EM}$ it takes to find sets on which a coloring with more than two colors is transitive.  For instance, suppose we have a coloring $c:[\mathbb{M}]^2\rightarrow\{0,1,2\}$.  For each $i\in\{0,1,2\}$, we can consider the coloring $c_i:[\mathbb{M}]^2\rightarrow\{0,1\}$ given by $c_i(x,y)=\left\{\begin{array}{ll}1&\text{if }c(x,y)=i\\0&\text{otherwise}\end{array}\right.$.  By three applications of $\mathsf{EM}$, we obtain a set $S$ transitive under all three colorings $c_0,c_1,c_2$ simultaneously, and therefore transitive under $c$.

With similar principles---like Ramsey's theorem for pairs itself \cite{MR3430365}---one needs fewer iterations to increase the number of colors.

\begin{question}
  Let $c:[\mathbb{M}]^2\rightarrow[0,n)$ be given.  Is it possible to find an infinite set $S$ so that $c\upharpoonright S$ is transitive using fewer than $n$ instances of $\mathsf{EM}$?
\end{question}

Finally, one could consider a version of $\mathsf{EM}$ for infinitely many colors at once:
\begin{definition}
  $\mathsf{EM}_\infty$ holds if whenever $c:[\mathbb{M}]^2\rightarrow\mathbb{M}$ is a coloring, there is an infinite set $S$ so that $c\upharpoonright [S]^2$ is transitive.
\end{definition}

\begin{lemma}
  $\mathsf{ACA}_0$ implies $\mathsf{EM}_\infty$.
\end{lemma}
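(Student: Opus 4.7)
The plan is to deduce $\mathsf{EM}_\infty$ from Ramsey's theorem for triples with two colors, $\mathsf{RT}^3_2$, which $\mathsf{ACA}_0$ proves. The key observation I would rely on is that whether a single triple $\{x,y,z\}$ violates transitivity for $c$ is a $\Delta^0_0$ condition on the triple, so even though $c$ may have unbounded range, the failure of transitivity defines a 2-coloring of $[\mathbb{M}]^3$.

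Given $c:[\mathbb{M}]^2\to\mathbb{M}$, I would define $d:[\mathbb{M}]^3\to\{0,1\}$ by
\[ d(x,y,z)=1 \iff c(x,y)=c(y,z) \text{ and } c(x,z)\neq c(x,y), \]
so that $d(x,y,z)=0$ exactly when the triple $\{x,y,z\}$ satisfies the transitivity requirement for $c$ (either vacuously, because $c(x,y)\neq c(y,z)$, or substantively, because all three pair-values coincide). Then I would apply $\mathsf{RT}^3_2$ to obtain an infinite set $S$ with $d$ constant on $[S]^3$. If the constant value is $0$, then $c$ is transitive on $[S]^2$ by construction, and we are done.

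The only remaining step — and what I expect to be the sole nontrivial point — is to rule out the possibility that $d\equiv 1$ on $[S]^3$. I would take any four elements $x_1<x_2<x_3<x_4$ of $S$ and chain the equalities coming from the first clause of $d=1$: applied to $(x_1,x_2,x_3)$, $(x_2,x_3,x_4)$, and $(x_1,x_3,x_4)$, these force
\[ c(x_1,x_2)=c(x_2,x_3)=c(x_3,x_4)=c(x_1,x_3). \]
But the second clause of $d(x_1,x_2,x_3)=1$ asserts $c(x_1,x_3)\neq c(x_1,x_2)$, an immediate contradiction. Hence $d\equiv 1$ is impossible, so $d\equiv 0$, and $S$ is a transitive set for $c$. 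The real obstacle is just recognizing that transitivity can be captured as a finite-color Ramsey condition on triples despite the unboundedness of $c$; once the coloring $d$ is in hand the rest is a short syntactic check.
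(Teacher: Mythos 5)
Your proof is correct and takes essentially the same route as the paper: reduce to $\mathsf{RT}^3_2$ (provable in $\mathsf{ACA}_0$), color triples by whether transitivity holds, and rule out the all-non-transitive homogeneous case by examining four elements. The only cosmetic difference is the choice of triples used to derive the contradiction (you chain through $\{x_1,x_2,x_3\}$, $\{x_2,x_3,x_4\}$, $\{x_1,x_3,x_4\}$; the paper uses $\{x,y,z\}$, $\{y,z,w\}$, $\{x,y,w\}$), but both land on the same kind of immediate contradiction.
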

\begin{proof}
Let $c:[\mathbb{M}]^2\rightarrow\mathbb{M}$ be given.  We use the well-known fact that Ramsey's theorem for triples is provable in $\mathsf{ACA}_0$: define 
\[c'(x,y,z)=\left\{\begin{array}{ll}
1&\text{if }c\text{ is transitive on the set }\{x,y,z\}\\
0&\text{otherwise}
\end{array}\right..\]
Note that $c$ is transitive on $\{x,y,z\}$ with $x<y<z$ if either $c(x,y)\neq c(y,z)$ or $c(x,y)=c(y,z)=c(x,z)$.

By Ramsey's theorem for triples, we obtain an infinite set $S$ so that $c$ is homogeneous on $[S]^3$.  If $c$ were homogeneously $0$, we could find four elements $x,y,z,w$ so that $c$ fails to be transitive on every triple.  Then we must have $c(x,y)=c(y,z)\neq c(x,z)$, and $c(y,z)=c(z,w)\neq c(y,w)$.  But then $c(x,y)\neq c(y,w)$, so $c$ is transitive on $\{x,y,w\}$, contradicting the homogeneity of $c$.
\end{proof}

\begin{question}
  Does $\mathsf{EM}_\infty$ imply $\mathsf{ACA}_0$?
\end{question}

\printbibliography

\end{document}